\newcommand{\tf}{\tilde{f}}
\newtheorem{thm}{Theorem}[section]
\newtheorem{lmm}[thm]{Lemma}
\newtheorem{cor}[thm]{Corollary}
\newtheorem{prop}[thm]{Proposition}
\theoremstyle{definition}
\newcommand{\cov}{\mathrm{Cov}}
\newcommand{\ee}{\mathbb{E}}
\newcommand{\mf}{\mathcal{F}}
\newcommand{\ml}{\mathcal{L}}
\newcommand{\cq}{\mathcal{Q}}
\newcommand{\pp}{\mathbb{P}}
\newcommand{\rr}{\mathbb{R}}
\newcommand{\var}{\mathrm{Var}}
\newcommand{\ve}{\varepsilon}
\newcommand{\zz}{\mathbb{Z}}
\newcommand{\fpar}[2]{\frac{\partial #1}{\partial #2}}
\numberwithin{equation}{section}
\renewcommand{\tilde}{\widetilde}
\begin{document}

\title[Convergence of directed polymers to deterministic KPZ]{Weak convergence of directed polymers to deterministic KPZ at high temperature}
\author{Sourav Chatterjee}
\address{Departments of Mathematics and Statistics, Stanford University}
\email{souravc@stanford.edu}
\thanks{Research partially supported by NSF grant DMS-1855484}
\keywords{Directed polymer, KPZ, random surface, scaling limit}
\subjclass[2010]{82C41, 60G60, 39A12}



\begin{abstract}
It is shown that when $d\ge 3$, the growing random surface generated by the $(d+1)$-dimensional directed polymer model at sufficiently high temperature, after being smoothed by taking microscopic local averages,  converges to a solution of the deterministic KPZ equation in a suitable scaling limit. 
\end{abstract}

\maketitle


\section{The directed polymer model}
Take any $d\ge 1$. Let $\beta$ be a positive real number, let $\xi=(\xi_{t,x})_{t\in \zz_{\ge1}, x\in \zz^d}$ be a collection of i.i.d.~random variables and let $g:\zz^d \to \rr$ be a function. The $(d+1)$-dimensional directed polymer model at inverse temperature $\beta$, in the random environment $\xi$, and initial condition $g$, defines a growing random surface $f:\zz_{\ge0}\times \zz^d \to \rr$ as follows. For any $t$ and $x$, let $\cq(t,x)$ be the set of all nearest-neighbor paths in $\zz^d$ of length $t$ that end at $x$. A typical element $q\in \cq(t,x)$ is a $(t+1)$-tuple $(q_0,q_1,\ldots,q_t)$ of elements of $\zz^d$, where $q_t=x$, and $|q_i-q_{i+1}|=1$ for each $i$, where $|\cdot|$ denotes Euclidean norm. Let $f(0,x)=g(x)$ for each $x\in \zz^d$, and for $t\in \zz_{>0}$ and $x\in \zz^d$, let
\begin{align}\label{polymerform}
f(t,x) &= \frac{1}{\beta} \log \biggl[\sum_{q\in \cq(t,x)} \exp\biggl(\beta g(q_0) + \beta \sum_{i=1}^t\xi_{i,q_i}\biggr)\biggr]. 
\end{align}
The directed polymer model has a long and storied history in probability and statistical physics. For recent surveys of this vast literature, see~\cite{bateschatterjee20, comets17}. The model is supposed to converge to a scaling limit given by the Kardar--Parisi--Zhang (KPZ) equation~\cite{kardaretal86}. Formally, the KPZ equation is given by
\begin{align}\label{kpzeq}
\partial_t h = \nu \Delta h + \frac{\lambda}{2} |\nabla h|^2 + \sqrt{D} \dot{W},
\end{align}
where $h(t,x)$ is the height of a continuum random surface at time $t$ and location $x$, $\dot{W}$ is a random field known as space-time white noise, and $\nu$, $\lambda$, and $D$ are  nonnegative real-valued parameters. In dimension one, the meaning of the KPZ equation, as well as the convergence of the directed polymer model to a KPZ limit, are now well-understood --- see \cite{chatterjee21} for a brief survey. 

For $d\ge 3$, it has been shown recently in~\cite{lygkoniszygouras22} that when $\beta$ is sufficiently small, the discrete surface defined by~\eqref{polymerform} converges to a Gaussian field upon suitable centering and scaling. A similar result was proved for $d=2$, when $\beta$ decays as $t\to\infty$ like $(\log t)^{-1/2}$, in the earlier paper \cite{caravennaetal17}. These Gaussian fields are the same as the ones coming from recent attempts at constructing solutions to the KPZ equation in $d\ge 2$ in the so-called {\it Edwards--Wilkinson regime}~\cite{coscoetal20, mukherjeeetal16, magnenunterberger18, dunlapetal20, chatterjeedunlap20, caravennaetal20, cometsetal19, cometsetal20, gu20}. In this regime, the gradient-squared term of the KPZ equation \eqref{kpzeq} does not appear (that is, $\lambda = 0$), which reduces the KPZ equation to the stochastic heat equation (SHE) with additive noise. 





This paper proposes a different notion of KPZ convergence for directed polymers in $d\ge 3$ and sufficiently small $\beta$, where the limit is the {\it deterministic KPZ equation}
\begin{align}\label{kpzeq2}
\partial_t h = \nu \Delta h + \frac{\lambda}{2} |\nabla h|^2,
\end{align}
which is just \eqref{kpzeq} with $D= 0$. The coefficients $\nu$ and $\lambda$ are determined by $\beta$ and $d$. The limit is obtained by smoothing the discrete surface \eqref{polymerform} by locally averaging it over balls of small but growing radius. This notion of convergence to KPZ, by local smoothing of a discrete random surface, seems to be a new idea that has not appeared in the prior literature. One can say that this is convergence to KPZ `on average', or just `weak convergence'. It would be interesting to see if there are other models of surface growth that  exhibit similar behavior (for example, those discussed in \cite{chatterjee21b}).



\section{Results}\label{resultsec}
Fix $d\ge 3$, as before. Let  $\xi=(\xi_{t,x})_{t\in \zz_{\ge1}, x\in \zz^d}$ be a field of i.i.d.~random variables, which we will refer to as the `noise variables'. Let $g:\rr^d\to \rr$ be a Lipschitz function. For each $\ve >0$, define $g_\ve :\zz^d \to \rr$ as $g_\ve(x) := g(\ve x)$. Let $f_\ve$ be the growing random surface generated by the directed polymer model with initial data $f_\ve(0,\cdot) = g_\ve(\cdot)$. Let $\ml$ denote the set of all probability measures on $\rr$ that are push-forwards of the standard Gaussian measure under some Lipschitz map. We assume that the law of the noise variables belongs to the class $\ml$. Then, in particular, $\xi_{t,x}$ has a finite moment generating function $m(\beta) := \ee(e^{\beta \xi_{t,x}})$. Define
\[
\mu(\beta) := \frac{m(2\beta)}{m(\beta)^2}.
\]
Let $\rho_d$ be the probability that a simple symmetric random walk started at the origin in $\zz^d$ returns to the origin at least once. Since $d\ge 3$, $\rho_d < 1$. In particular, $\mu(0) = 1< 1/\rho_d$. Let $\beta_0$ be the supremum of all nonnegative $\beta$ such that $\mu(\beta) < 1/\rho_d$. It is easy to see that $\mu$ is a continuous function, which implies that $\beta_0> 0$. The range $(0,\beta_0)$ is known as the ``$L^2$ regime'' of sufficiently high temperatures. For a justification of this nomenclature, as well as the importance of this regime and its various occurrences in the literature, see~\cite{lygkoniszygouras22}. 

To state the main result about the scaling limit of $f_\ve$, we first need to identify a constant related to the model, whose existence at high temperature is established by the following proposition.
\begin{prop}\label{etaprop}
When $\beta \in (0,\beta_0)$, the following limit exists and is finite:
\[
\eta(\beta) := \lim_{t\to\infty} \ee\biggl[\log \biggl\{\frac{1}{(2dm(\beta))^{t}} \sum_{q\in \cq(t,0)} \exp\biggl( \beta \sum_{i=1}^t\xi_{i,q_i}\biggr)\biggr\}\biggr]. 
\]
\end{prop}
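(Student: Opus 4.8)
The plan is to reduce the statement to a standard property of the normalized directed polymer partition function and then combine martingale convergence with a concentration estimate. Write $\lambda=\log m(\beta)$ and, for expectation $E_0$ over a simple random walk $S$ on $\zz^d$ started at the origin, set $W_t:=E_0[\exp(\beta\sum_{i=1}^t\xi_{i,S_i}-t\lambda)]$. Reversing each path in $\cq(t,0)$ and relabelling the i.i.d.\ environment shows that the random variable inside the expectation in Proposition~\ref{etaprop} has the same law as $\beta\xi_{1,0}-\lambda+\log W_{t-1}$, and since $\xi_{1,0}$ is a Lipschitz image of a Gaussian (hence has finite mean and finite logarithmic moment), it suffices to prove that $\lim_{t\to\infty}\ee[\log W_t]$ exists and is finite. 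For each fixed $t$ this expectation is well defined: $(\log W_t)^+\le W_t$ has mean at most $1$, while bounding $W_t$ below by the contribution of one straight path shows $\ee[(\log W_t)^-]<\infty$.

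Next I would record the classical facts. The process $(W_t)$ is a nonnegative mean-one martingale for $\mathcal{F}_t=\sigma(\xi_{i,x}:i\le t)$, and the standard second-moment identity is $\ee[W_t^2]=E_0^{\otimes2}[\mu(\beta)^{L_t}]$, where $L_t$ counts the collisions of two independent walks up to time $t$; using $\mu(\beta)\ge 1$ and the Green's-function identity $\sum_i\P(S_i=S_i')=\sum_i p_{2i}(0)$ (which identifies the meeting probability of the two walks with $\rho_d$), the total collision count is geometric of ratio $\rho_d$, so $\sup_t\ee[W_t^2]=(1-\rho_d)/(1-\mu(\beta)\rho_d)<\infty$ precisely because $\beta<\beta_0$. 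Hence $W_t\to W_\infty$ almost surely and in $L^2$, $\ee[W_\infty]=1$, and $W_\infty>0$ almost surely by the usual zero--one argument. Existence of the limit is then free: conditional Jensen for the concave function $\log$ gives $\ee[\log W_{t+1}\mid\mathcal{F}_t]\le\log W_t$, so $t\mapsto\ee[\log W_t]$ is non-increasing, equals $0$ at $t=0$, and stays $\le\log\ee[W_t]=0$; thus $\ee[\log W_t]\downarrow\eta_W\in[-\infty,0]$.

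Finiteness, $\eta_W>-\infty$, is the real content, and is where both hypotheses are used. The Paley--Zygmund inequality together with $\sup_t\ee[W_t^2]<\infty$ gives $\P(W_t\ge\tfrac12)\ge p_0>0$, uniformly in $t$. Writing $\xi_{i,x}=\phi(g_{i,x})$ with $\phi$ Lipschitz and $(g_{i,x})$ i.i.d.\ standard Gaussian, the map $(g_{i,x})\mapsto\log W_t$ is differentiable with $\|\nabla\log W_t\|^2\le\beta^2\|\phi'\|_\infty^2\sum_{s=1}^t\sum_y\hat\rho_{s,t}(y)^2$, where $\hat\rho_{s,t}$ is the quenched law of the length-$t$ polymer at time $s$, so that the double sum is the quenched expected number of collisions of two independent length-$t$ polymers. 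In the $L^2$ regime and for $d\ge 3$ this has expectation bounded uniformly in $t$, since the polymer is diffusive at every intermediate scale and $\sum_{s\ge1}s^{-d/2}<\infty$; the Gaussian Poincaré inequality then yields $\sup_t\Var(\log W_t)\le C<\infty$. Chebyshev now prevents $\ee[\log W_t]$ from being very negative: if $\ee[\log W_t]\le -M$, then $\P(W_t\ge\tfrac12)\le C/(M-\log2)^2$, contradicting the Paley--Zygmund bound once $M$ is large (independently of $t$); hence $\eta_W\ge -M_0>-\infty$. (Alternatively, from $W_t=\ee[W_\infty\mid\mathcal{F}_t]$ and conditional Jensen one gets $\eta_W\ge\ee[\log W_\infty]$, and one proves $\ee[(\log W_\infty)^-]<\infty$ via a polynomial lower-tail bound $\P(W_\infty<\epsilon)\le C\epsilon^a$, obtained either from the same concentration applied to $W_\infty$ or from a fractional-moment bootstrap using $W_\infty=\sum_yZ_N(y)W_\infty^{(y,N)}$ and the weak correlations of the $W_\infty^{(y,N)}$ in the $L^2$ regime.)

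The step I expect to be the main obstacle is the uniform-in-$t$ control of the expected collision local time $\ee[\sum_{s\le t}\sum_y\hat\rho_{s,t}(y)^2]$ of two polymers — a quantitative diffusivity estimate that has to absorb the partition-function normalisation hidden in $\hat\rho_{s,t}$. This is precisely where $d\ge 3$ enters (through the summability of $s^{-d/2}$) and where the $L^2$-regime condition $\mu(\beta)\rho_d<1$ enters (through the uniform second-moment bound); everything else in the argument is soft.
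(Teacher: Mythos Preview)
Your approach is the ``martingale argument'' the paper alludes to but does not pursue. The paper instead proves the stronger Cauchy estimate $|\eta(\beta,t)-\eta(\beta,u)|\le C u^{-c}$ by specializing its main $L^2$ machinery (Lemma~\ref{loglmm2} with $g\equiv 0$, using that $Y(s,t,0)\stackrel{d}{=}Y(t-s,0)$). Your route is more self-contained and gives existence cleanly via monotonicity of $\ee[\log W_t]$, but loses the rate.

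The gap you flag is the right one, and as written your Poincar\'e step is circular. The quantity $\ee\|\nabla_g\log W_t\|^2$ equals $\ee\bigl[W_t^{-2}|\nabla_g W_t|^2\bigr]$, and bounding the factor $W_t^{-2}$ uniformly in $t$ is a negative-moment estimate for $W_t$ --- the very finiteness you are trying to establish. The heuristic ``the polymer is diffusive at intermediate scales'' does not yield $\sup_t\ee\sum_{s,y}\hat\rho_{s,t}(y)^2<\infty$ without first controlling the small values of $W_t$; your alternative via a polynomial lower tail for $W_\infty$ has the same circularity. The fix is to trade Poincar\'e for the convexity-based concentration the paper uses (Proposition~\ref{tailprop}, applied exactly as in Lemma~\ref{techlmm1}): compute instead $\ee|\nabla_\xi W_t|^2=\beta^2\,\ee[N_t\mu(\beta)^{N_t}]\le C$ (no $W_t^{-2}$ here), combine it with your Paley--Zygmund bound $\pp(W_t\ge\tfrac12)\ge p_0$ to get $\pp(\log W_t\ge a,\ |\nabla_\xi\log W_t|\le b)\ge c>0$ for suitable constants (on $\{W_t\ge\tfrac12\}$ one has $|\nabla\log W_t|\le 2|\nabla W_t|$), and then use that $\xi\mapsto\log W_t$ is convex (log-sum-exp) together with Proposition~\ref{tailprop} to obtain a uniform sub-Gaussian lower tail $\pp(\log W_t\le -C_1-u)\le 2e^{-C_2u^2}$. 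This immediately gives $\sup_t\ee[(\log W_t)^-]<\infty$ and closes your argument without any diffusivity input.
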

The quantity $\eta(\beta)$ is the `second order term' in the asymptotic expression for the log partition function at high temperature as $t\to\infty$, the first order term being $t \log (2dm(\beta))$. It turns out that a multiple of this second order term has to be subtracted off when renormalizing the surface to obtain the deterministic KPZ limit below. It is not immediately clear if $\eta(\beta)$ admits an explicit calculation.

For $x\in \zz^d$ and $r>0$, let $B(x,r)$ denote the set of all $y\in \zz^d$ such that $|x-y|\le r$. Let $\{r_\ve\}_{\ve >0}$ be any collection of positive real numbers tending to infinity as $\ve \to 0$, such that $r_\ve = o(\ve^{-1})$. For $t\in \rr$, let $[t]$ denote the greatest integer $\le t$. For $x = (x_1,\ldots,x_d)\in \rr^d$, let $[x]$ denote the vector $([x_1],\ldots,[x_d])$. For $t\in \rr_{\ge 0}$, $x\in \rr^d$, and $\ve>0$, define the rescaled values 
\begin{align}\label{tvedef}
t_\ve := [\ve^{-2} t], \ \ \ x_\ve := [\ve^{-1} x].
\end{align}
Define the smoothed, rescaled and renormalized surface $\tf^{(\ve)}: \rr_{\ge 0} \times \rr^d\to \rr$ as 
\begin{align}\label{tfvedef}
\tf^{(\ve)}(t,x) := \frac{1}{|B(x_\ve, r_\ve)|} \sum_{y\in B(x_\ve, r_\ve)} f_\ve(t_\ve, y) - \frac{t_\ve}{\beta}\log(2d m(\beta)) - \frac{\eta(\beta)}{\beta}.
\end{align}
The following theorem is the main result of this paper.
\begin{thm}\label{polymerthm}
Let all notations and conditions be as above, with $d\ge 3$. Suppose that $\beta \in (0,\beta_0)$. Let $h$ be the Cole--Hopf solution of the deterministic KPZ equation \eqref{kpzeq2} with $\nu=1/2d$, $\lambda = \beta/d$, and initial condition $g$. This is defined as 
\[
h(t,x) = \frac{1}{\beta} \log \ee[\exp(\beta g(\sqrt{t/d} Z+ x))],
\]
where $Z$ is a $d$-dimensional standard Gaussian random vector. Then for any $t> 0$ and $x\in \rr^d$, the random variable $\tf^{(\ve)}(t,x)$ converges in $L^2$ to $h(t,x)$ as $\ve \to 0$.  
\end{thm}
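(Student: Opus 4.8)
The plan is to exploit the multiplicative (partition-function) structure of the model and the $L^2$ estimates available in the high-temperature regime. Write $W_\ve(t,x) := (2dm(\beta))^{-t_\ve}\sum_{q\in\cq(t_\ve,x)}\exp(\beta\sum_{i=1}^{t_\ve}\xi_{i,q_i})$ for the normalized point-to-point partition function with flat weights, and more generally let $Z_\ve(t,x)$ denote the same sum but with the initial weight $e^{\beta g_\ve(q_0)}$ included. Then $f_\ve(t_\ve,x) = \frac1\beta\log Z_\ve(t,x) + \frac{t_\ve}{\beta}\log(2dm(\beta))$, so that after the deterministic shifts in \eqref{tfvedef},
\[
\tf^{(\ve)}(t,x) = \frac{1}{|B(x_\ve,r_\ve)|}\sum_{y\in B(x_\ve,r_\ve)}\frac1\beta\Bigl(\log Z_\ve(t,y) - \eta(\beta)\Bigr).
\]
The first step is to separate a quenched random fluctuation from a deterministic profile. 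By the Markov property of the underlying random walk, $Z_\ve(t,x) = \E_S\bigl[e^{\beta g_\ve(S_0)} W^{S}_\ve\bigr]$ where the expectation is over a random walk bridge; the key point, already visible in Proposition \ref{etaprop}, is that the ``environment part'' behaves like a constant. Concretely, I would show that $Z_\ve(t,x)$ is well-approximated by $\bar Z_\ve(t,x) := \E[e^{\beta g_\ve(S_{t_\ve})}]\cdot W_\ve(t,x)$ where now $S$ is a simple random walk of length $t_\ve$ started at $x$ — i.e.\ that the environment decouples from the slowly-varying initial data because $g_\ve$ varies on scale $\ve^{-1}\gg\sqrt{t_\ve}$ (local CLT) while $W_\ve$ concentrates.

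The second step is the $L^2$ concentration input. In the $L^2$ regime $\beta\in(0,\beta_0)$, the second-moment method gives $\E[W_\ve(t,x)^2] = \E[\mu(\beta)^{\#\text{self-intersections of two independent walks}}]$, which stays bounded as $t_\ve\to\infty$ precisely because $\mu(\beta)<1/\rho_d$; hence $W_\ve(t,x)$ converges in $L^2$ to a positive limit $W_\infty(x)$ with $\E W_\infty = 1$. The point of averaging over the ball $B(x_\ve,r_\ve)$ with $r_\ve\to\infty$ is that the family $\{W_\ve(t,y): y\in B(x_\ve,r_\ve)\}$ becomes asymptotically decorrelated on the macroscopic scale, so a law-of-large-numbers argument over the ball forces
\[
\frac{1}{|B(x_\ve,r_\ve)|}\sum_{y\in B(x_\ve,r_\ve)}\log W_\ve(t,y)\ \longrightarrow\ \E[\log W_\infty] = \eta(\beta)
\]
in $L^2$ (using Proposition \ref{etaprop} to identify the limiting mean, plus uniform integrability of $\log W_\ve$, which again comes from the $L^2$ bound together with a lower-tail estimate on $W_\ve$). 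Meanwhile, for the smooth part, the local CLT gives $\frac1\beta\log\E[e^{\beta g_\ve(S_{t_\ve})}] = \frac1\beta\log\E[\exp(\beta g(\ve S_{t_\ve}))] \to \frac1\beta\log\E[\exp(\beta g(\sqrt{t/d}\,Z + x))] = h(t,x)$, since $\ve S_{t_\ve}\Rightarrow \sqrt{t/d}\,Z + x$ and $g$ is Lipschitz (so the exponential moments are controlled uniformly).

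The third step is to put these together: $\tf^{(\ve)}(t,x) = \frac1\beta\,\overline{\log Z_\ve}(t,x) - \frac{\eta(\beta)}{\beta}$, the ball-average of $\log Z_\ve$ splits (up to the approximation error from step one) into the ball-average of $\log\bar Z_\ve(t,y) = \log\E[e^{\beta g_\ve(S_{t_\ve}^{(y)})}] + \log W_\ve(t,y)$; the first summand is essentially constant $=\beta h(t,x) + o(1)$ over the ball since $r_\ve = o(\ve^{-1})$, and the second averages to $\eta(\beta)$ by step two. Collecting terms yields $\tf^{(\ve)}(t,x)\to h(t,x)$ in $L^2$, and one checks $h$ is indeed the Cole--Hopf solution of \eqref{kpzeq2} by a direct computation: $u(t,x) := e^{\beta h(t,x)} = \E[e^{\beta g(\sqrt{t/d}Z+x)}]$ solves the heat equation $\partial_t u = \frac{1}{2d}\Delta u$, and the Cole--Hopf transform $h = \frac1\beta\log u$ converts this into \eqref{kpzeq2} with $\nu = 1/2d$ and $\lambda = \beta/d$.

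\textbf{Main obstacle.} The delicate step is the law of large numbers over the ball in step two: one needs a quantitative decorrelation estimate showing that $\mathrm{Cov}(\log W_\ve(t,y),\log W_\ve(t,y'))$ is small when $|y-y'|$ is large compared to $\sqrt{t_\ve}$ but small compared to $r_\ve$, and that the boundary layer of points $y$ within $O(\sqrt{t_\ve})$ of each other is a negligible fraction of the ball — this is where $r_\ve\to\infty$ is used. Handling $\log$ rather than $W_\ve$ itself requires controlling the lower tail of $W_\ve(t,y)$ uniformly in $\ve$ (so that $\log W_\ve$ is uniformly integrable and its covariances make sense); in the $L^2$ regime this follows from a Paley--Zygmund-type argument giving $\P(W_\ve(t,y)\le\delta)$ small uniformly, but making the covariance bound quantitative enough to beat the $r_\ve^{-d}$-type gain from averaging is the technical heart of the proof.
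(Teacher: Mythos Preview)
Your outline has the right architecture—split $\log Z_\ve$ into a deterministic profile plus a fluctuating free-energy piece, then kill the latter by averaging over the ball—but both halves rest on a scale separation that is simply false in this problem. You write that ``$g_\ve$ varies on scale $\ve^{-1}\gg\sqrt{t_\ve}$'', but $t_\ve=[\ve^{-2}t]$ gives $\sqrt{t_\ve}\sim\ve^{-1}\sqrt t$: the initial data varies on \emph{exactly} the diffusive scale of the walk, not slowly. (This is no accident; if $g_\ve$ really were slowly varying, the limit would be linear in $g$ and the nonlinear KPZ term would disappear.) So the factorization $Z_\ve(t_\ve,y)\approx \ee[e^{\beta g_\ve(S_{t_\ve})}]\cdot(\text{flat partition function})$ cannot be obtained by freezing $g_\ve$ along the walk. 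The same mismatch undermines your law of large numbers: you ask for decorrelation of the flat partition functions once $|y-y'|\gg\sqrt{t_\ve}$, and that the remaining pairs form a negligible boundary layer; but every pair in $B(x_\ve,r_\ve)$ satisfies $|y-y'|\le 2r_\ve=o(\ve^{-1})=o(\sqrt{t_\ve})$, so the ``boundary layer'' is the entire ball.

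The paper's device that fixes both issues at once is an \emph{intermediate time} $s_\ve$ with $1\ll t_\ve-s_\ve\ll r_\ve$ (concretely $t_\ve-s_\ve=[\sqrt{r_\ve}]$). One writes
\[
\log Z_\ve(t_\ve,y)=\log Y(s_\ve,t_\ve,y)+\log\bigl(Z_\ve(t_\ve,y)/Y(s_\ve,t_\ve,y)\bigr),
\]
where $Y(s_\ve,t_\ve,y)$ is the flat partition function built only from the noise in the short recent window $(s_\ve,t_\ve]$. The first summand now has \emph{finite range of dependence} equal to $t_\ve-s_\ve\ll r_\ve$ (paths from $y$ and $y'$ cannot intersect once $|y-y'|>t_\ve-s_\ve$), so its ball average genuinely obeys an LLN and tends to $\ee[\log Y(t_\ve-s_\ve,0)]\to\eta(\beta)$. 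For the second summand one does \emph{not} attempt an LLN over the ball; one shows instead that at each single $y$ it is $L^2$-close to the deterministic number $\log G_\ve(s_\ve,y)$. This is where transience is actually used: conditionally on the recent noise, the ratio is a random convex combination of $Z_\ve(s_\ve,\cdot)$ over the endpoint at time $s_\ve$, and its conditional variance is controlled by the collision probability $\kappa$ of two backward walks started from those endpoints, which is small in $d\ge3$ once $t_\ve-s_\ve\to\infty$ spreads the endpoints apart. Finally $\log G_\ve(s_\ve,y)\to\beta h(t,x)$ uniformly over $y\in B(x_\ve,r_\ve)$ by the CLT, since $|t_\ve-s_\ve|+|y-x_\ve|=o(\ve^{-1})$. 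In short: the missing idea in your proposal is to peel off a thin layer of recent time so that the LLN runs on a genuinely short-range object, while the long-range remainder is shown to be \emph{deterministic} pointwise rather than merely decorrelated.
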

Theorem \ref{polymerthm} proves convergence of $\tf^{(\ve)}(t,x)$ to the deterministic KPZ limit $h(t,x)$ for each fixed $(t,x)$. This distinguishes it from much of the recent literature, which deal with convergence towards Edwards--Wilkinson limits on average ---  that is, after integrating the field against some test function~\cite{caravennaetal20, coscoetal20,dunlapetal20,gu20,lygkoniszygouras22}.  From this perspective, the pointwise statement of Theorem \ref{polymerthm} has some resemblance to the results of \cite{cometsetal19} and \cite{cometsetal20} which give pointwise laws of large numbers and central limit theorems for the difference between a solution of the KPZ equation with given initial data and a stationary solution of the KPZ equation. 

The main idea behind the proof of Theorem \ref{polymerthm} is the following. We show that $f_\ve(t_\ve,y)$ splits as the sum of a random quantity that has no dependence on the initial condition $g$, and an almost deterministic (i.e., highly concentrated) quantity that approximately equals $h(t,x)$ with high probability. The random part, moreover, is a function only of noise variables located very close to $(t_\ve,y)$, so that averaging the random parts over $y\in B(x_\ve, r_\ve)$ results in an approximately deterministic value that has no dependence on $g$. The renormalization term in \eqref{tfvedef} is this deterministic value.

One can also consider the rescaled and renormalized surface without smoothing, defined as
\[
f^{(\ve)}(t,x) := f_\ve(t_\ve, x_\ve) - \frac{t_\ve}{\beta}\log(2d m(\beta)) - \frac{\eta(\beta)}{\beta}.
\]
It is a simple consequence of Theorem \ref{polymerthm} that $f^{(\ve)}$ converges to $h$ in the a certain weak sense as $\ve \to 0$. 
\begin{cor}\label{polymercor}
Take any continuous function $\phi:\rr^d\to \rr$ with compact support. Then for any $t>0$, as $\ve \to 0$, the random variable $\int f^{(\ve)}(t,x) \phi(x)dx$ converges in $L^2$ to $\int h(t,x)\phi(x)dx$.
\end{cor}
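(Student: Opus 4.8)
The plan is to transfer the local averaging that defines $\tf^{(\ve)}$ off the random surface and onto the test function, after which Theorem~\ref{polymerthm} can be applied essentially directly. Fix $t>0$ and abbreviate $\tf^{(\ve)}=\tf^{(\ve)}(t,\cdot)$, $f^{(\ve)}=f^{(\ve)}(t,\cdot)$, $h=h(t,\cdot)$. For a fixed realization of $\xi$, the map $x\mapsto f^{(\ve)}(x)$ is constant on each half-open cube $C_k:=\ve k+[0,\ve)^d$, $k\in\zz^d$, and likewise $\tf^{(\ve)}$ is constant on $C_k$ with value equal to the average of those constants over $k'\in B(k,r_\ve)$. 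Interchanging the (finite, since $\phi$ is compactly supported) sum over $k$ with the integral against $\phi$ then gives the almost sure identity
\[
\int \tf^{(\ve)}(x)\,\phi(x)\,\dd x=\int f^{(\ve)}(x)\,\Phi_\ve(x)\,\dd x ,
\]
where $\Phi_\ve$ is obtained from $\phi$ by first averaging over the cubes $C_k$ and then over the rescaled lattice ball $\ve\bigl(B(0,r_\ve)\cap\zz^d\bigr)$. Because $r_\ve\to\infty$ while $r_\ve=o(\ve^{-1})$, the spatial scale $\ve r_\ve$ of this averaging tends to $0$; since $\phi$ is uniformly continuous with compact support it follows that $\|\Phi_\ve-\phi\|_\infty\to0$ and that all of the functions $\Phi_\ve$ are supported in one fixed compact set $K$.

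The one quantitative input I would need beyond the statement of Theorem~\ref{polymerthm} is the uniform second-moment bound
\[
M:=\sup_{0<\ve<\ve_0}\ \sup_{x\in K}\ \ee\bigl[f^{(\ve)}(t,x)^2\bigr]<\infty
\]
for a suitable $\ve_0>0$ (with $K$ enlarged so as also to contain the supports of the relevant averaging balls). This is precisely the sort of estimate that the proof of Theorem~\ref{polymerthm} must produce: after writing $f_\ve$ in terms of the free energy with zero initial data, the dependence on the initial data is controlled by the Lipschitz constant of $g$ together with the now-fixed value of $t$, while the centered zero-initial-data free energy has variance bounded uniformly in the polymer length throughout the $L^2$ regime $\beta\in(0,\beta_0)$. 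Granting $M<\infty$, Jensen's inequality applied to the ball average in the definition of $\tf^{(\ve)}$ also gives $\sup_{\ve<\ve_0}\sup_{x\in\supp\phi}\ee\bigl[\tf^{(\ve)}(t,x)^2\bigr]\le M$.

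With these facts in hand the corollary follows. By the identity above and Minkowski's integral inequality,
\[
\Bigl\|\int f^{(\ve)}\phi\,\dd x-\int \tf^{(\ve)}\phi\,\dd x\Bigr\|_{L^2}=\Bigl\|\int f^{(\ve)}(x)\bigl(\phi(x)-\Phi_\ve(x)\bigr)\,\dd x\Bigr\|_{L^2}\le \|\phi-\Phi_\ve\|_\infty\,|K|\,\sqrt{M}\longrightarrow 0 .
\]
On the other hand, Theorem~\ref{polymerthm} gives $\tf^{(\ve)}(t,x)\to h(t,x)$ in $L^2$ for each $x$, so a second application of Minkowski's integral inequality bounds $\bigl\|\int \tf^{(\ve)}\phi\,\dd x-\int h\,\phi\,\dd x\bigr\|_{L^2}$ by $\int\|\tf^{(\ve)}(t,x)-h(t,x)\|_{L^2}\,|\phi(x)|\,\dd x$, which tends to $0$ by dominated convergence: on $\supp\phi$ the integrand is at most $\sqrt{M}+\sup_{x\in\supp\phi}|h(t,x)|<\infty$, the finiteness of the last term being clear since $g$ Lipschitz forces $h(t,\cdot)$ to be Lipschitz. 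Adding the two displays proves that $\int f^{(\ve)}(t,x)\phi(x)\,\dd x\to\int h(t,x)\phi(x)\,\dd x$ in $L^2$.

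The only genuinely delicate point is the uniform bound $M<\infty$; the Fubini identity and the two passages to the limit are routine, and since $t$ is held fixed throughout the corollary the Lipschitz initial data contributes only an amount that is $O(1)$ in $\ve$ and hence harmless. I would therefore expect the write-up to quote $M<\infty$ from the proof of Theorem~\ref{polymerthm} rather than reprove it.
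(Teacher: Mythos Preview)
Your argument is correct and follows essentially the same route as the paper's proof. Both proofs split $\int f^{(\ve)}\phi-\int h\phi$ into $\int(f^{(\ve)}-\tf^{(\ve)})\phi$ and $\int(\tf^{(\ve)}-h)\phi$, handle the first piece by transferring the ball-average from the surface onto the test function via the change of variables $x\mapsto x+\ve y$ (your identity $\int\tf^{(\ve)}\phi=\int f^{(\ve)}\Phi_\ve$ is exactly the paper's computation $\int f_\ve(t_\ve,x_\ve+y)\phi(x)\,dx=\int f_\ve(t_\ve,x_\ve)\phi(x-\ve y)\,dx$ repackaged), and then invoke the uniform bound $\sup_{x\in K}\ee[F_\ve(t_\ve,x_\ve)^2]\le C$, which the paper pulls from the proof of Lemma~\ref{logbound}. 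The only cosmetic difference is that for the second piece the paper cites the quantitative estimate of Lemma~\ref{xlmm} to get uniform $L^2$ convergence on compacts, whereas you use pointwise $L^2$ convergence from Theorem~\ref{polymerthm} together with dominated convergence and the same uniform moment bound; either works.
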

Results analogous to Corollary \ref{polymercor} have been proved for a model of continuous directed polymers in \cite{mukherjeeetal16, coscoetal20}. The weak limit in that model turns out to be the heat equation rather than the deterministic KPZ equation, because limit is obtained for the partition function rather than the log partition function. It is probable that the weak limit of the log partition function in the continuous model is also deterministic KPZ. It would be interesting to see if some analogue of Theorem \ref{polymerthm} can be proved in the continuous setting. 

A natural open problem is to extend the above results to the entire high temperature regime $(0,\beta_c)$, where $\beta_c$ is the smallest critical inverse temperature of the model (see \cite{bateschatterjee20} for the precise definition; it is known that $\beta_c\ge \beta_0$). It is possible that Theorem \ref{polymerthm} and Corollary \ref{polymercor} are valid for all $\beta\in (0,\beta_c)$, especially in view of \cite[Theorem 1.1]{bateschatterjee20}, which shows that the endpoint distribution of the polymer is not localized when $\beta \le \beta_c$. It is possible that delocalization of the endpoint is sufficient for the proof of Theorem \ref{polymerthm} instead of the stronger condition $\beta \le \beta_0$. This possibility is bolstered by the fact that a result similar to Corollary \ref{polymercor} for the partition function (instead of the log partition function) in the entire high temperature regime in a related continuous model has recently been established in~\cite{coscoetal20}.

Another interesting direction is to extend Theorem \ref{polymerthm} to dimension two. It is unclear what the appropriate analogue of Theorem \ref{polymerthm} would be in $d=2$. 

Investigating the behavior of second order fluctuations is a natural question. For example, in Theorem \ref{polymerthm}, one can ask if for some explicit $\gamma>0$, $\ve^{-\gamma}(\tf^{(\ve)}(t,x) - h(t,x))$ converges in law to a non-degenerate limiting distribution as $\ve \to 0$. It is unclear whether this might hold for fixed $(t,x)$, or after averaging against a smooth test function. A similar question can be asked in the context of Corollary~\ref{polymercor}, about the rescaled difference $\ve^{-\gamma}\int (f^{(\ve)}(t,x) -h(t,x))\phi(x)dx$. For this, the results of \cite{coscoetal20,dunlapetal20,lygkoniszygouras22,mukherjeeetal16} indicate that the  exponent $\gamma$ could be $(d-2)/2$, because these papers show that $\ve^{-(d-2)/2}\int (f^{(\ve)}(t,x) -\ee(f^{(\ve)}(t,x)))\phi(x)dx$ has a non-degenerate limiting distribution in a number of related settings.

Finally, as pointed out by one of the referees (and in analogy with the results of \cite{coscoetal20, mukherjeeetal16}), it is likely that in Theorem \ref{polymerthm} and Corollary~\ref{polymercor}, if one takes a suitable weak limit of the partition function instead of the log partition function, one would obtain the deterministic heat equation instead of the deterministic KPZ equation. It would be interesting to see if this is true, under a similar local averaging framework.

The rest of the paper is devoted to the proofs of the above results. Many technical ideas in the proofs are similar to ideas from various recent papers, such as \cite{caravennaetal17, caravennaetal20, lygkoniszygouras22}. The main new idea is the formulation of the scaling limit, presented above, that leads to the deterministic KPZ limit.

\section{Concentration inequalities}
We will use a technique from the classical theory of concentration of measure to control the lower tail probabilities of the partition functions of the polymer model. This technique was first used in \cite{carmonahu02}. For a recent application, see~\cite{caravennaetal20}. A complete account is given below for the reader's convenience.

First, we need the following classical result, known as the Gaussian concentration inequality (see \cite[Appendix A]{chatterjee14}). 
\begin{thm}[Gaussian concentration inequality]\label{gaussconcthm}
Let $X$ be an $n$-dimensional standard Gaussian random vector and let $f:\rr^n\to \rr$ be a Lipschitz  function with Lipschitz constant $L$. Then for any $t\ge 0$,
\[
\pp(|f(X)-\ee (f(X))| \ge t) \le 2e^{-t^2/2L^2}. 
\]
\end{thm}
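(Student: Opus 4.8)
The plan is the classical two-step route: first establish a dimension-free bound on the exponential moments of $f(X)-\E f(X)$, then convert it to a tail bound via the Chernoff method.

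I begin with reductions. Rescaling $f$ to $f/L$, we may take $L=1$. Since $-f$ is again $1$-Lipschitz, it suffices to prove the one-sided bound $\P(f(X)-\E f(X)\ge t)\le e^{-t^2/2}$ for every $1$-Lipschitz $f$; applying it to $-f$ and summing the two estimates gives the stated two-sided inequality with the factor $2$. Convolving $f$ with a narrow Gaussian kernel, we may further assume $f\in C^\infty$ with $\sup_x|\nabla f(x)|\le 1$: the mollifications remain $1$-Lipschitz, converge to $f$ pointwise, and their means converge to $\E f(X)$, so the one-sided bound passes to the limit (e.g.\ by Fatou along a subsequence). Truncating large values, we may also take $f$ bounded, so that $H(\lambda):=\E[e^{\lambda f(X)}]$ is finite and smooth in $\lambda$.

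The heart of the matter is the bound
\[
\E\bigl[e^{\lambda(f(X)-\E f(X))}\bigr]\le e^{\lambda^2/2}\qquad\text{for all }\lambda\in\R .
\]
I would derive it from the Gaussian logarithmic Sobolev inequality $\operatorname{Ent}_\gamma(u^2)\le 2\,\E_\gamma|\nabla u|^2$ by the Herbst argument: taking $u=e^{\lambda f/2}$ and using $|\nabla f|\le 1$ turns the log-Sobolev inequality into $\lambda H'(\lambda)-H(\lambda)\log H(\lambda)\le\tfrac{\lambda^2}{2}H(\lambda)$; dividing by $\lambda^2 H(\lambda)$ this reads $\tfrac{d}{d\lambda}\bigl(\tfrac1\lambda\log H(\lambda)\bigr)\le\tfrac12$, and integrating from $0$ — where $\tfrac1\lambda\log H(\lambda)\to\E f(X)$ — yields $\log H(\lambda)\le\lambda\,\E f(X)+\tfrac{\lambda^2}{2}$, which is the claim. (An alternative avoiding the log-Sobolev inequality uses the Ornstein--Uhlenbeck semigroup $(P_s)_{s\ge 0}$: from the covariance representation $\operatorname{Cov}(e^{\lambda f},f)=\lambda\int_0^\infty e^{-s}\,\E[e^{\lambda f}\,\nabla f\cdot P_s(\nabla f)]\,ds$ and the bound $|\nabla f\cdot P_s(\nabla f)|\le|\nabla f|\,|P_s(\nabla f)|\le 1$, one obtains $(\log H)'(\lambda)\le\E f(X)+\lambda$ for $\lambda>0$ and integrates from $0$.) Finally, Markov's inequality gives $\P(f(X)-\E f(X)\ge t)\le e^{-\lambda t+\lambda^2/2}$, and $\lambda=t$ yields $e^{-t^2/2}$; undoing the rescaling $f\mapsto f/L$ replaces $t$ by $t/L$ and finishes the proof.

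The only genuinely substantive ingredient is the Gaussian log-Sobolev inequality — or, in the alternative route, the semigroup commutation $\nabla P_s=e^{-s}P_s\nabla$ — which is precisely where Gaussianity and the dimension-independence of the constant enter; the reductions to smooth bounded $f$, the first-order ODE in $\lambda$, and the Chernoff step are all routine. For a fully self-contained treatment one would additionally prove the Gaussian log-Sobolev inequality, e.g.\ by tensorization together with a one-dimensional two-point inequality and the central limit theorem, or via the Bakry--\'Emery $\Gamma_2$-criterion for the Ornstein--Uhlenbeck semigroup; since the inequality is quoted from the literature here, it is equally legitimate to take it as a black box.
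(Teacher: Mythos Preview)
Your proof sketch is correct and follows one of the standard routes (Herbst argument from the Gaussian log-Sobolev inequality, with the Ornstein--Uhlenbeck semigroup alternative noted). However, the paper does not actually prove this theorem: it is stated as a classical result with a reference to the literature (\cite[Appendix A]{chatterjee14}), so there is no proof in the paper to compare against. Your write-up is thus a self-contained addition rather than a reproduction of the paper's argument.
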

Let $\ml(K)$ be the set of all probability measures on $\rr$ that are obtained as push-forwards of the standard Gaussian measure under a Lipschitz map with Lipschitz constant $\le K$. Let $\ml_n(K)$ denote the set of the product measures on $\rr^n$ whose marginal distributions are all in $\ml(K)$. We will say that a random vector is in class $\ml_n(K)$ if its law is in $\ml_n(K)$. The following result is an easy corollary of the Gaussian concentration inequality.
\begin{cor}\label{lipconc}
Let $X$ be an $n$-dimensional random vector in class $\ml_n(K)$ and let $f:\rr^n\to \rr$ be a Lipschitz  function with Lipschitz constant $L$. Then for any $t\ge 0$,
\[
\pp(|f(X)-\ee (f(X))| \ge t) \le 2e^{-t^2/2L^2K^2}. 
\]
\end{cor}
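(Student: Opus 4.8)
The plan is to reduce the statement to the Gaussian concentration inequality (Theorem~\ref{gaussconcthm}) by unwinding the definition of the class $\ml_n(K)$. First I would write $X$, in distribution, as $X=(\phi_1(G_1),\dots,\phi_n(G_n))$, where $G=(G_1,\dots,G_n)$ is an $n$-dimensional standard Gaussian vector — its independent coordinates matching the product structure of the law of $X$ — and each $\phi_i:\rr\to\rr$ is Lipschitz with constant at most $K$. Setting $\Phi(y):=(\phi_1(y_1),\dots,\phi_n(y_n))$, the random variable $f(X)$ then has the same distribution as $(f\circ\Phi)(G)$, so it suffices to control the latter.

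The one computation to carry out is an estimate for the Lipschitz constant of the composition $f\circ\Phi$. For $y,y'\in\rr^n$, using the Lipschitz bound for $f$ followed by the coordinatewise bound for $\Phi$,
\[
|f(\Phi(y))-f(\Phi(y'))|\le L\,|\Phi(y)-\Phi(y')|=L\Bigl(\sum_{i=1}^n|\phi_i(y_i)-\phi_i(y_i')|^2\Bigr)^{1/2}\le LK\,|y-y'|,
\]
so $f\circ\Phi$ is Lipschitz with constant at most $LK$.

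Finally I would apply Theorem~\ref{gaussconcthm} to $f\circ\Phi$ with Lipschitz constant $LK$, which gives, for every $t\ge0$,
\[
\pp\bigl(|f(X)-\ee(f(X))|\ge t\bigr)=\pp\bigl(|(f\circ\Phi)(G)-\ee((f\circ\Phi)(G))|\ge t\bigr)\le 2e^{-t^2/2L^2K^2},
\]
which is exactly the claimed bound. There is no genuine obstacle here; the only minor points worth noting are the composition estimate above and the fact that $\ee(f(X))$ is finite, which holds since $|f(\Phi(G))|\le|f(\Phi(0))|+LK|G|$ and $\ee|G|<\infty$ (and is in any case implied by the concentration bound itself).
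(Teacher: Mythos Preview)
Your proof is correct and follows exactly the same approach as the paper: write $X_i=\phi_i(G_i)$ with $G$ standard Gaussian and each $\phi_i$ $K$-Lipschitz, observe that $f\circ\Phi$ is $LK$-Lipschitz, and apply Theorem~\ref{gaussconcthm}. You give a bit more detail on the composition estimate and the finiteness of $\ee(f(X))$, but the argument is identical in substance.
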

\begin{proof}
If $X = (X_1,\ldots,X_n)$, we can write $X_i = g_i(Y_i)$ where $Y_1,\ldots, Y_n$ are i.i.d.~standard Gaussian random variables and $g_1,\ldots, g_n$ are Lipschitz functions with Lipschitz constant $K$. Then $f(X)$, as a function of $Y = (Y_1,\ldots, Y_n)$, has Lipschitz constant $\le LK$. The claimed inequality now follows from the Gaussian concentration inequality.
\end{proof}
A corollary of the above corollary is the following result, which is a standard idea from concentration of measure~\cite{ledoux01}.
\begin{cor}\label{conccor}
Let $X$ be an $n$-dimensional random vector in class $\ml_n(K)$. Let $A$ be a closed subset of $\rr^n$. Let $D := \inf\{\|X-x\|: x\in A\}$ and $p := \pp(X\in A)$. Then for any $t\ge 0$,
\begin{align*}
\pp(D \ge K\sqrt{2\log(2/p)}+ t) &\le 2e^{-t^2/2K^2}. 
\end{align*}
\end{cor}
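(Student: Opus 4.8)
The plan is to apply Corollary \ref{lipconc} to the distance function $x \mapsto \dist(x,A) := \inf_{y\in A}\|x-y\|$, which is $1$-Lipschitz on $\rr^n$ for any nonempty closed set $A$ (if $A=\emptyset$ then $p=0$ and the asserted bound is vacuous, so we may assume $A\ne\emptyset$). Write $D=\dist(X,A)$. First note that $D$ is integrable: since $X$ lies in class $\ml_n(K)$ its coordinates are push-forwards of Gaussians under $K$-Lipschitz maps, hence sub-Gaussian, so $\ee D<\infty$ and $\ee D$ is well-defined. Corollary \ref{lipconc} with $L=1$ then gives, for every $s\ge 0$,
\[
\pp(D - \ee D \ge s) \le 2e^{-s^2/2K^2}, \qquad \pp(\ee D - D \ge s) \le 2e^{-s^2/2K^2}.
\]

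The one substantive step is to bound $\ee D$ in terms of $p=\pp(X\in A)$. Here we use that $A$ is closed: $D=0$ if and only if $X\in A$, so $\pp(D=0)=p$. Applying the second inequality above with $s=\ee D$,
\[
p = \pp(D\le 0) \le \pp(\ee D - D \ge \ee D) \le 2e^{-(\ee D)^2/2K^2},
\]
and rearranging gives $\ee D \le K\sqrt{2\log(2/p)}$ (this also holds trivially when $\ee D = 0$).

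Combining this with the first inequality of the first display, for any $t\ge 0$,
\[
\pp\bigl(D \ge K\sqrt{2\log(2/p)} + t\bigr) \le \pp(D \ge \ee D + t) \le 2e^{-t^2/2K^2},
\]
which is the claim. I do not expect a genuine obstacle in this argument; the only points needing a word of care are the integrability of $D$ (so that $\ee D$ makes sense) and the use of closedness of $A$ to identify $\{D=0\}$ with $\{X\in A\}$. Everything else is a direct double application of Corollary \ref{lipconc} together with the elementary rearrangement.
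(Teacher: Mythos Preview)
Your proof is correct and follows essentially the same approach as the paper: apply Corollary \ref{lipconc} to the $1$-Lipschitz distance function, use the lower tail at $s=\ee D$ together with $\{X\in A\}=\{D=0\}$ (closedness of $A$) to bound $\ee D\le K\sqrt{2\log(2/p)}$, and then conclude with the upper tail. Your extra care about integrability of $D$ and the degenerate case $A=\emptyset$ is a nice touch not spelled out in the paper.
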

\begin{proof}
It is not hard to see that $D$ is a Lipschitz function of $X$ with Lipschitz constant $1$. Therefore by Corollary \ref{lipconc},
\begin{align}\label{ppd}
\pp(|D-\ee(D)|\ge t) \le 2e^{-t^2/2K^2}. 
\end{align}
Since $A$ is a closed set, the above inequality implies that
\begin{align*}
p&= \pp(X\in A)= \pp(D=0)\\
&\le \pp(|D-\ee(D)|\ge \ee(D)) \le 2e^{-\ee(D)^2/2K^2},
\end{align*}
which gives
\[
\ee(D)\le K\sqrt{2\log (2/p)}.
\]
Thus, again appealing to \eqref{ppd}, we get the desired result.
\end{proof}
Using Corollary \ref{conccor}, we obtain the following variant of \cite[Proposition 1.6]{ledoux01}.
\begin{prop}\label{tailprop}
Let $X$ be an $n$-dimensional random vector in class $\ml_n(K)$. Let $f:\rr^n\to \rr$ be a continuously differentiable convex function. Take any $a\in \rr$ and $b>0$, and let $p:= \pp(f(X)\ge a, \, |\nabla f(X)|\le b)$. Then for any $t\ge 0$,
\begin{align*}
\pp(f(X)\le a- Kb\sqrt{2\log(2/p)} -t)\le 2e^{-t^2/2b^2K^2}.
\end{align*}
\end{prop}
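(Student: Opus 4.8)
The plan is to deduce this directly from Corollary \ref{conccor}, using convexity of $f$ to translate a lower bound on $f(X)$ into a lower bound on the distance from $X$ to the ``good'' set where $f\ge a$ and $|\nabla f|\le b$. Concretely, I would set
\[
A := \{x\in \rr^n : f(x)\ge a \text{ and } |\nabla f(x)|\le b\},
\]
and observe that, since $f$ is continuously differentiable, both $f$ and $\nabla f$ are continuous, so $A$ is a closed subset of $\rr^n$ and $p = \pp(X\in A)$ in the notation of Corollary \ref{conccor}. If $p=0$ the asserted inequality is vacuous (its left-hand side is $\pp(f(X)=-\infty)=0$), so I may assume $p>0$, hence $A\neq\emptyset$.

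The key step is the first-order characterization of convexity: for every $y\in\rr^n$ and every $x\in A$,
\[
f(y)\ \ge\ f(x) + \la \nabla f(x),\, y-x\ra\ \ge\ a - |\nabla f(x)|\,|y-x|\ \ge\ a - b\,|y-x|,
\]
where the first inequality is convexity, the second is Cauchy--Schwarz combined with $f(x)\ge a$, and the third uses $|\nabla f(x)|\le b$. Taking the infimum over $x\in A$ yields $f(y)\ge a - b\,\dist(y,A)$ for every $y\in\rr^n$. Applying this with $y=X$ and writing $D := \dist(X,A) = \inf\{\|X-x\| : x\in A\}$ as in Corollary \ref{conccor}, we obtain $f(X)\ge a - bD$ almost surely.

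It then remains only to combine this with the concentration bound. For any $t\ge 0$,
\begin{align*}
\pp\bigl(f(X)\le a - Kb\sqrt{2\log(2/p)} - t\bigr) &\le \pp\bigl(a - bD \le a - Kb\sqrt{2\log(2/p)} - t\bigr)\\
&= \pp\bigl(D \ge K\sqrt{2\log(2/p)} + t/b\bigr),
\end{align*}
and Corollary \ref{conccor}, applied with $t/b$ in place of $t$, bounds the right-hand side by $2e^{-(t/b)^2/2K^2} = 2e^{-t^2/2b^2K^2}$. I do not expect a genuine obstacle here: the argument is essentially immediate once Corollary \ref{conccor} is in hand, and the only points that require a little care are the closedness of $A$ (which is precisely why continuous differentiability, rather than mere differentiability, is assumed, so that the event $|\nabla f(X)|\le b$ is closed) and the degenerate case $p=0$.
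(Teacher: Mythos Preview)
Your proof is correct and follows essentially the same approach as the paper: define the closed set $A=\{f\ge a,\ |\nabla f|\le b\}$, use the first-order convexity inequality to deduce $f(X)\ge a-bD$ where $D=\dist(X,A)$, and then apply Corollary~\ref{conccor} with $t/b$ in place of $t$. The only cosmetic difference is that the paper writes the convexity inequality with the roles of the two points swapped, and it does not explicitly single out the degenerate case $p=0$.
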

\begin{proof}
By convexity,
\[
f(y)-f(x)\le \nabla f(y)\cdot(y-x)
\]
for any $x, y\in \rr^n$. Thus, if $f(y)\ge a$ and $|\nabla f(y)|\le b$, we have that for any $x$,
\[
f(x) \ge a - b|x-y|.
\]
Let $A$ be the set of all $y$ such that $f(y)\ge a$ and $|\nabla f(y)|\le b$. Since $f$ is continuously differentiable, $A$ is a closed set. Let $p := \pp(X\in A)$ and let $D$ be the distance of $X$ from $A$. Then by the above inequality,
\begin{align*}
f(X) \ge a - b D. 
\end{align*}
Thus, for any $t\ge 0$,
\begin{align*}
\pp(f(X)\le a- Kb\sqrt{2\log(2/p)} - t)  &\le \pp(a-bD \le a- Kb\sqrt{2\log(2/p)} - t)\\
&= \pp(D\ge K\sqrt{2\log(2/p)} + t/b).
\end{align*}
The claimed result now follows by Corollary \ref{conccor}.
\end{proof}

\section{Random walk estimates}\label{rwsec}
In this section, $C,C_1,C_2,\ldots$ will denote positive constants depending only on $d$, whose values may change from line to line. (Here $d$ is the dimension fixed in Section \ref{resultsec}. Recall that $d\ge 3$.)  If a constant depends on some additional parameter $\theta$, we will denote it by $C(\theta)$. 

Let $\{S_n\}_{n\ge 0}$ be a simple symmetric random walk on $\zz^d$, starting at the origin. We will now collect several estimates for this walk that will be useful later. First, recall the well-known fact that for each $n$,
\begin{align}\label{rwest}
\sup_{x\in \zz^d} \pp(S_n = x)\le \frac{C}{n^{d/2}}.
\end{align}
(The simplest way to show this is by Fourier transforms. For example, see \cite[Lemma 18.3]{chatterjee21}.) On the other hand, it is a simple consequence of Hoeffding's inequality~\cite{hoeffding63} for sums of independent and uniformly bounded random variables that for any $x$ and $n$, 
\begin{align}\label{rwest2}
\pp(S_n=x) \le C_1 e^{-C_2|x|^2/n}.
\end{align}
Another easy consequence of Hoeffding's inequality that we will use later is that for any $\theta\ge 0$ and $n\ge 0$,
\begin{align}\label{expsn}
\ee(e^{\theta |S_n|}) \le C_1e^{C_2\theta^2n}.
\end{align}
By Chebychev's inequality, 
\[
\pp(|S_n|\le 2\sqrt{n})\ge 1 - \frac{\ee|S_n|^2}{4n} = \frac{3}{4}.
\]
Thus,  for any $\theta \ge 0$ and any $n$,
\begin{align}\label{expminus}
\ee(e^{-\theta |S_n|})\ge e^{-2\theta \sqrt{n}} \pp(|S_n|\le 2\sqrt{n}) \ge \frac{3e^{-2\theta \sqrt{n}}}{4}.
\end{align}
Combining \eqref{rwest} and \eqref{rwest2}, and letting $n_0:= a (2+|x|)^2/\log(2+|x|)$ for a sufficiently small number $a$, we get 
\begin{align}
\pp(S_n = x\text{ for some } n) &\le C_1 \sum_{n\le n_0}  e^{-C_2|x|^2/n} + C_1\sum_{n> n_0}n^{-d/2}\notag \\
&\le C_1 n_0 e^{-C_2 |x|^2/n_0} + C_1 n_0^{-(d-2)/2} \notag\\
&\le C_1 (\log (2+|x|))^{C_2}(2+ |x|)^{2-d}. \label{greeneq}
\end{align} 
Recall that $\rho_d$ is the probability that $S_n$ returns to the origin at least once. Let $\{S_n'\}_{n\ge 0}$ be another $d$-dimensional simple symmetric random walks started at the origin, independent of $\{S_n\}_{n\ge 0}$. It is easy to see that $\{S_n-S_n'\}_{n\ge 0}$ is a Markov chain with the same law as the Markov chain $\{S_{2n}\}_{n\ge 0}$. Since a simple symmetric random walk can return to its starting point only at even times, this shows that if 
\begin{align}\label{ndef}
N_n := |\{0\le k\le n: S_k = S_k'\}|, \ \ \ N_\infty := |\{k\ge 0: S_k=S_k'\}|,
\end{align}
then we have
\begin{align}\label{npmf}
\pp(N_\infty =k) = \rho_d^{k-1}(1-\rho_d)
\end{align}
for each $k\ge 1$. In particular, $\ee(e^{a N_\infty})$  is finite if $0< a < \log(1/\rho_d)$. 

Next, suppose that the two random walks are started from two different locations $x$ and $y$. Let $N_n^{x,y}$ and $N_\infty^{x,y}$ be the analogues of $N_n$ and $N_\infty$ in this situation. Let $\tau^{x,y}$ be the smallest $n$ such that $S_n=S_n'$. If there is no such $n$, let $\tau^{x,y}=\infty$. Let
\begin{align}\label{kappadef}
\kappa(x,y) := \pp(\tau^{x,y}<\infty).
\end{align}
Note that by \eqref{greeneq}, and the fact that $\{S_n-S_n'\}_{n\ge 0}$ behaves like $\{S_{2n}\}_{n\ge 0}$ started at $x-y$, we get 
\begin{align}
\kappa(x,y) &= \pp(S_{2n} = 0 \text{ for some } n) \notag\\
&\le C_1 (\log (2+|x-y|))^{C_2}(2+ |x-y|)^{2-d}.\label{kappabd}
\end{align}
If $\tau^{x,y}=\infty$, then $N_\infty^{x,y} = 0$ and $N_n^{x,y}=0$ for all $n$. On the other hand, if $\tau^{x,y}$ is finite, the strong Markov property of $\{S_n-S_n'\}_{n\ge 0}$ implies that $N_n^{x,y}$ has the same law as $N_n$ and $N_\infty^{x,y}$ has the same law as $N_\infty$. Thus, by \eqref{greeneq}, we get that for any  function $f:\zz_{\ge 0}\to [0,\infty)$ with $f(0)=0$, 
\begin{align}
\ee(f( N_\infty^{x,y})) &= \ee(f(N_\infty))\kappa(x,y). \label{covprep}
\end{align}
We will use the above facts on several occasions. We will also use the following lemma.
\begin{lmm}\label{kappalmm}
Let $\{S_n\}_{n\ge 0}$ and $\{S_n'\}_{n\ge 0}$ be independent simple symmetric random walks on $\zz^d$ starting at the origin. Take any $\alpha>0$. Then for all $n$,
\begin{align*}
\ee[\kappa(S_n,S_n')^\alpha] &\le  C_1(\alpha) (\log n)^{C_2(\alpha)} n^{-(d-2)\alpha'/2},
\end{align*}
where $\alpha' = \min\{\alpha, d/(d-2)\}$. 
\end{lmm}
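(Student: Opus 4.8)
The plan is to reduce the two-walk expectation to a weighted lattice sum for a single walk, and then estimate that sum by splitting $\zz^d$ into the ball $\{|x|\le\sqrt m\}$ and its complement, where $m:=2n$. First, since $\tau^{x,y}$ (hence $\kappa(x,y)$) depends on the two walks only through the difference chain $\{S_k-S_k'\}_{k\ge0}$, the function $\kappa$ depends only on $x-y$; write $\kappa(x,y)=\bar\kappa(x-y)$. As recorded in the text, the difference chain has the same law as $\{S_{2k}\}_{k\ge0}$, so $S_n-S_n'$ is distributed as $S_m$, and therefore $\ee[\kappa(S_n,S_n')^\alpha]=\ee[\bar\kappa(S_m)^\alpha]=\sum_{x\in\zz^d}\pp(S_m=x)\,\bar\kappa(x)^\alpha$. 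The only analytic input about $\bar\kappa$ is \eqref{kappabd}, which gives $\bar\kappa(x)^\alpha\le C_1(\alpha)(\log(2+|x|))^{C_2(\alpha)}(2+|x|)^{(2-d)\alpha}$.

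On the ball $|x|\le\sqrt m$ I would bound $\pp(S_m=x)\le Cm^{-d/2}$ using the local limit estimate \eqref{rwest}, and then sum the residual weight over spherical shells, comparing $\sum_{|x|\le\sqrt m}(\log(2+|x|))^{C_2(\alpha)}(2+|x|)^{(2-d)\alpha}$ with $\int_1^{\sqrt m}r^{d-1+(2-d)\alpha}(\log(2+r))^{C_2(\alpha)}\,dr$. The exponent $d-1+(2-d)\alpha$ is $>-1$, $=-1$, or $<-1$ according as $\alpha<d/(d-2)$, $\alpha=d/(d-2)$, or $\alpha>d/(d-2)$, so the integral is at most a constant depending on $\alpha$ times $(\log m)^{C_2(\alpha)}m^{(d-(d-2)\alpha)/2}$, $(\log m)^{C_2(\alpha)+1}$, and $1$ respectively. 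Multiplying by $Cm^{-d/2}$, every case yields a contribution $\le C_1(\alpha)(\log m)^{C_2(\alpha)}m^{-(d-2)\alpha'/2}$ with $\alpha'=\min\{\alpha,d/(d-2)\}$, using $(d-2)\alpha'\le d$ to absorb any leftover of size $O(m^{-d/2})$ (the origin term, and the entire case $\alpha>d/(d-2)$).

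On the complement $|x|>\sqrt m$, since $(2-d)\alpha<0$ one has $(2+|x|)^{(2-d)\alpha}\le(\sqrt m)^{(2-d)\alpha}=m^{-(d-2)\alpha/2}$, hence the contribution is at most $C_1(\alpha)m^{-(d-2)\alpha/2}\,\ee[(\log(2+|S_m|))^{C_2(\alpha)}]$. Applying \eqref{expsn} with $\theta=m^{-1/2}$, together with $t^q\le C(q)e^t$ for $t\ge0$, shows that every moment of $|S_m|/\sqrt m$ is bounded uniformly in $m$; combined with $\log(2+|S_m|)\le\log(2+\sqrt m)+\log(1+|S_m|/\sqrt m)$ this yields $\ee[(\log(2+|S_m|))^{C_2(\alpha)}]\le C(\alpha)(\log m)^{C_2(\alpha)}$. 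Since $\alpha\ge\alpha'$, $m^{-(d-2)\alpha/2}\le m^{-(d-2)\alpha'/2}$, so this piece is also within the target. Adding the two contributions and replacing $m$ by $2n$ (adjusting the constants) completes the proof.

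The step requiring the most care is the trichotomy in the ball estimate: the sum $\sum_{|x|\le\sqrt m}(2+|x|)^{(2-d)\alpha}$ is dominated by the outermost shell $|x|\approx\sqrt m$ when $\alpha<d/(d-2)$ but by the lattice points near the origin when $\alpha>d/(d-2)$, and this crossover is precisely what produces the truncated exponent $\alpha'=\min\{\alpha,d/(d-2)\}$ in the statement. The remaining ingredients --- the reduction to one walk, the local limit bound \eqref{rwest}, and the sub-Gaussian moment bound extracted from \eqref{expsn} --- are routine.
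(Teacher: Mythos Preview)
Your proof is correct and follows the same overall strategy as the paper: reduce $\ee[\kappa(S_n,S_n')^\alpha]$ to a single-walk sum via the identity $S_n-S_n'\stackrel{d}{=}S_{2n}$, split the lattice into an inner and an outer region, apply the local limit bound \eqref{rwest} and the shell summation on the inner part, and control the outer part separately.

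Two tactical differences are worth noting. First, the paper avoids your trichotomy entirely by observing at the outset that $\kappa\le 1$ implies $\kappa^\alpha\le\kappa^{\alpha'}$, so one may assume $\alpha\le d/(d-2)$ from the start; this collapses your three cases into one. Second, the paper splits at $|z|\le\sqrt{n}\log n$ rather than $|z|\le\sqrt{m}$, so that the outer region is handled by Hoeffding's inequality, $\pp(|S_{2n}|\ge\sqrt{n}\log n)\le 2e^{-C(\log n)^2}$, which is super-polynomially small and requires no further work. Your treatment of the outer region---bounding $(2+|x|)^{(2-d)\alpha}$ pointwise and controlling the residual logarithmic moment via \eqref{expsn}---is perfectly valid but a bit longer. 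Either set of choices works; the paper's are slightly more economical.
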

\begin{proof}
Since $\kappa$ is uniformly bounded by $1$, we have $\kappa(x,y)^\alpha \le \kappa(x,y)^{\alpha'}$ for all $x,y$. So let us assume without loss of generality that $\alpha \le d/(d-2)$. By the facts that $\kappa(x,y)\le 1$ and $\kappa(x,y) = \kappa(0,x-y)$ for all $x,y$, and that $S_n-S_n'$ has the same law as $S_{2n}$, we get
\begin{align*}
\ee[\kappa(S_n,S_n')^\alpha] &\le \sum_{z\, :\, |z|\le \sqrt{n}\log n}\kappa(0,z)^\alpha \pp(S_{2n}=z) + \pp(|S_{2n}|\ge \sqrt{n}\log n).
\end{align*}
By Hoeffding's inequality,
\begin{align*}
\pp(|S_{2n}|\ge \sqrt{n}\log n) \le 2e^{-C(\log n)^2}. 
\end{align*}
On the other hand, by \eqref{rwest}, $\pp(S_{2n}=z)\le Cn^{-d/2}$ for all $z$. Therefore, by \eqref{kappabd}, 
\begin{align*}
&\sum_{z\, :\, |z|\le \sqrt{n}\log n}\kappa(0,z)^\alpha \pp(S_{2n}=z) \\
&\le C_1(\alpha) (\log n)^{C_2(\alpha)}n^{-d/2} \sum_{z\, :\, |z|\le \sqrt{n}\log n}(1+ |z|)^{(2-d)\alpha} \\
&\le C_1(\alpha) (\log n)^{C_2(\alpha)}n^{-d/2} \sum_{0\le r\le \sqrt{n}\log n} r^{d-1} (1+r)^{(2-d)\alpha}\\
&\le C_1(\alpha) (\log n)^{C_2(\alpha)} n^{-(d-2)\alpha/2}. 
\end{align*}
(The assumption that $\alpha \le d/(d-2)$ was used in the last step.) The proof is now completed by combining the three displays above.
\end{proof}

\section{Discrete approximation of the Cole--Hopf solution}
In this section, $C, C_1,C_2,\ldots$ will denote positive constants whose values may depend only on $d$, $\beta$ and $g$. The values may change from line to line.

Let $h$ be the Cole--Hopf solution to the deterministic KPZ equation, defined in the statement of Theorem \ref{polymerthm}. For a proof that this is indeed a solution, see \cite{chatterjee21}. We will need to work with a discrete approximation of $h$, defined as follows. For any $\ve >0$, define $G_\ve:\zz_{\ge 0}  \times \zz^d$ as 
\begin{align*}
G_\ve(t,x) := \ee(e^{\beta g_\ve(S_t + x)}), 
\end{align*}
where, as in Section \ref{rwsec}, $\{S_n\}_{n\ge0}$ denotes a simple symmetric random walk in $\zz^d$, started at the origin. Note that the definition of $G_\ve$ depends on $\beta$ and $g$, but we are suppressing that for notational simplicity. The following lemmas about $G_\ve$ will be useful.
\begin{lmm}\label{glim}
As $\ve \to 0$, $\beta^{-1} \log G_\ve(t_\ve, x_\ve) \to h(t,x)$ for any $(t,x)\in \rr_{\ge 0}\times \rr^d$. 
\end{lmm}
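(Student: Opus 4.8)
The plan is to pass to the limit directly in the random-walk representation of $G_\ve$, using a local central limit theorem together with a domination argument to justify exchanging limit and expectation. First I would rewrite $\beta^{-1}\log G_\ve(t_\ve, x_\ve) = \beta^{-1}\log \ee[e^{\beta g(\ve S_{t_\ve} + \ve x_\ve)}]$, using that $g_\ve(y) = g(\ve y)$. Since $x_\ve = [\ve^{-1}x]$ we have $\ve x_\ve \to x$, and since $t_\ve = [\ve^{-2}t]$ the rescaled walk $\ve S_{t_\ve}$ has covariance matrix $\ve^2 \cdot (t_\ve/d)\, I \to (t/d) I$. By the (multidimensional) central limit theorem, $\ve S_{t_\ve} \Rightarrow \sqrt{t/d}\, Z$ where $Z$ is a standard $d$-dimensional Gaussian; hence $\ve S_{t_\ve} + \ve x_\ve \Rightarrow \sqrt{t/d}\, Z + x$.

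Next I would upgrade this weak convergence of the argument to convergence of the expectations $\ee[e^{\beta g(\ve S_{t_\ve} + \ve x_\ve)}] \to \ee[e^{\beta g(\sqrt{t/d}\, Z + x)}]$, which is exactly $G_\ve$-side equal to $e^{\beta h(t,x)}$ after taking $\beta^{-1}\log$. The function $y \mapsto e^{\beta g(y)}$ is continuous, so weak convergence handles everything except uniform integrability. For that I would use that $g$ is Lipschitz, say with constant $\Lambda$, so $\beta g(y) \le \beta g(0) + \beta\Lambda|y|$, giving the pointwise bound $e^{\beta g(\ve S_{t_\ve}+\ve x_\ve)} \le e^{\beta g(0)} e^{\beta \Lambda \ve |x_\ve|} e^{\beta\Lambda \ve |S_{t_\ve}|}$. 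The prefactor $e^{\beta\Lambda\ve|x_\ve|}$ is bounded (it converges to $e^{\beta\Lambda|x|}$), and by \eqref{expsn} with $\theta = \beta\Lambda\ve$ and $n = t_\ve$,
\[
\ee(e^{\beta\Lambda\ve|S_{t_\ve}|}) \le C_1 e^{C_2 \beta^2\Lambda^2\ve^2 t_\ve} \le C_1 e^{C_2\beta^2\Lambda^2 t},
\]
a bound uniform in small $\ve$. A standard truncation argument then converts this uniform $L^1$-with-room bound (in fact any uniform bound on a slightly higher exponential moment, e.g. replacing $\Lambda$ by $2\Lambda$) into uniform integrability of the family $\{e^{\beta g(\ve S_{t_\ve}+\ve x_\ve)}\}_{\ve>0}$, which combined with the weak convergence yields convergence of the expectations. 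Finally, since $\log$ is continuous and the limit $\ee[e^{\beta g(\sqrt{t/d}\,Z+x)}] \ge e^{\beta g(x) - \text{const}} > 0$ is strictly positive, $\beta^{-1}\log G_\ve(t_\ve,x_\ve) \to \beta^{-1}\log \ee[e^{\beta g(\sqrt{t/d}\,Z+x)}] = h(t,x)$.

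The only genuinely delicate point is the uniform integrability / interchange of limit and expectation; once that is in hand the rest is soft. I expect the cleanest route is to observe that the bound above with $2\Lambda$ in place of $\Lambda$ shows the family is bounded in $L^{2}$ (after factoring out deterministic prefactors), hence uniformly integrable, so the convergence of expectations follows from weak convergence plus uniform integrability without any ad hoc truncation. For $t=0$ the statement is immediate since $G_\ve(0,x_\ve) = e^{\beta g(\ve x_\ve)} \to e^{\beta g(x)} = e^{\beta h(0,x)}$.
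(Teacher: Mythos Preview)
Your proposal is correct and follows essentially the same route as the paper: rewrite $G_\ve(t_\ve,x_\ve)=\ee[e^{\beta g(\ve S_{t_\ve}+\ve x_\ve)}]$, use the multivariate CLT for $\ve S_{t_\ve}$, and justify passage to the limit via a uniform $L^2$ bound obtained from the Lipschitz property of $g$ together with \eqref{expsn}. The paper writes the $L^2$ bound directly as $\ee(e^{2\beta g_\ve(S_{t_\ve}+x_\ve)})\le C_3 e^{C_4(t+|x|)}$, which is exactly your ``replace $\Lambda$ by $2\Lambda$'' suggestion.
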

\begin{proof}
Note that 
\[
G_\ve(t_\ve, x_\ve) =  \ee(e^{\beta g_\ve(S_{t_\ve} + x_\ve)}) =  \ee(e^{\beta g(\ve S_{t_\ve} + \ve x_\ve)}).
\]
By the multivariate central limit theorem, we know that as $\ve \to 0$, $t^{-1/2}_\ve S_{t_\ve}$ converges in distribution to $d^{-1/2}Z$, where $Z$ is a $d$-dimensional standard Gaussian random vector. Also, as $\ve \to 0$, $\ve x_\ve \to x$ and $\ve t^{1/2}_\ve \to \sqrt{t}$. Thus, as $\ve \to 0$, $\ve S_{t_\ve}+\ve x$ converges in distribution to $\sqrt{t/d}Z + x$. Consequently, $e^{\beta g(\ve S_{t_\ve} + \ve x_\ve)}$ converges in distribution to $e^{\beta g(\sqrt{t/d}Z + x)}$. 

Next, note that by the Lipschitz property of $g$, the inequality \eqref{expsn}, and the facts that $\ve|x_\ve|\le C_1+C_2|x|$ and $\ve^2 t_\ve \le C_1 +C_2 t$,
\begin{align*}
\ee(e^{2\beta g_\ve(S_{t_\ve} + x_\ve)}) &\le C_1\ee(e^{C_2\ve (|S_{t_\ve}| + |x_\ve|)}) \\
&\le C_1 e^{C_2(\ve^2 t_\ve+\ve|x_\ve|)} \le C_3e^{C_4(t+|x|)}.
\end{align*}
From this uniform boundedness of the second moment of $e^{\beta g(\ve S_{t_\ve} + \ve x_\ve)}$, and the distributional convergence proved in the previous paragraph, it is now easy to see that as $\ve \to 0$, 
\[
G_\ve(t_\ve,x_\ve) = \ee(e^{\beta g(\ve S_{t_\ve} + \ve x_\ve)}) \to \ee(e^{\beta g(\sqrt{t/d}Z + x)}),
\]
which completes the proof of the lemma. 
\end{proof}
\begin{lmm}\label{glip}
For any $t$, $x$ and $y$,
\[
|G_\ve(t,x)-G_\ve(t,y)| \le C_1 \ve|x-y| e^{C_2(\ve|x|+\ve|y|+\ve^2 t)}.
\]
\end{lmm}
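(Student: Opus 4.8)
The plan is to estimate $G_\ve(t,x)-G_\ve(t,y) = \ee\bigl[e^{\beta g_\ve(S_t+x)} - e^{\beta g_\ve(S_t+y)}\bigr]$ directly, using the elementary inequality $|e^a - e^b| \le |a-b|(e^a + e^b)$ inside the expectation. With $a = \beta g_\ve(S_t+x)$ and $b = \beta g_\ve(S_t+y)$, the Lipschitz property of $g$ (with some constant, absorbed into $C$ via the convention that $C_i$ depend on $d,\beta,g$) gives $|a-b| = \beta|g(\ve S_t + \ve x) - g(\ve S_t + \ve y)| \le C\ve|x-y|$. Pulling this deterministic factor out, we are left with $C\ve|x-y|\,\ee\bigl[e^{\beta g_\ve(S_t+x)} + e^{\beta g_\ve(S_t+y)}\bigr]$, so the whole problem reduces to bounding the first moment of $e^{\beta g_\ve(S_t+x)}$.

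For that moment bound, I would again use the Lipschitz property: $\beta g(\ve S_t + \ve x) \le \beta g(0) + C\ve(|S_t| + |x|)$, so $e^{\beta g_\ve(S_t+x)} \le C_1 e^{C_2\ve|x|}\, e^{C_2\ve|S_t|}$. Taking expectations and invoking the random walk estimate \eqref{expsn}, namely $\ee(e^{\theta|S_n|}) \le C_1 e^{C_2\theta^2 n}$ with $\theta = C_2\ve$, yields $\ee(e^{\beta g_\ve(S_t+x)}) \le C_1 e^{C_2\ve|x|} e^{C_3\ve^2 t}$. The same bound holds with $y$ in place of $x$. Adding the two and noting that $e^{C_2\ve|x|} + e^{C_2\ve|y|} \le 2 e^{C_2(\ve|x| + \ve|y|)}$, we arrive at $|G_\ve(t,x)-G_\ve(t,y)| \le C_1 \ve|x-y|\, e^{C_2(\ve|x| + \ve|y| + \ve^2 t)}$, which is exactly the claimed inequality after relabeling constants. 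This is essentially the same computation already carried out for the second moment in the proof of Lemma \ref{glim}, just applied to the first moment and to the difference.

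There is no real obstacle here; the only point requiring a little care is making sure the constants genuinely depend only on $d,\beta,g$ (in particular, that the Lipschitz constant of $g$ is treated as such a constant, consistent with the section's convention) and that the exponents combine cleanly after the two applications of the triangle-type inequality. If one wanted to be slightly more careful about tightening the exponent, one could also bound $|e^a-e^b|$ by $|a-b|\max(e^a,e^b) \cdot e^{|a-b|}$ or by $|a-b|\,e^{\max(a,b)}$, but the crude $(e^a+e^b)$ version is entirely adequate for the stated estimate and keeps the argument short.
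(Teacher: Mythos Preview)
Your proposal is correct and follows essentially the same route as the paper's own proof: both use $|e^a-e^b|\le |a-b|(e^a+e^b)$ with $a=\beta g_\ve(S_t+x)$, $b=\beta g_\ve(S_t+y)$, the Lipschitz bound $|a-b|\le C\ve|x-y|$, then control $\ee(e^{\beta g_\ve(S_t+x)})$ via $|g_\ve(S_t+x)|\le C(1+\ve|x|+\ve|S_t|)$ and the random walk estimate \eqref{expsn}. There is no meaningful difference in strategy or detail.
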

\begin{proof}
By the Lipschitz property of $g$, 
\begin{align*}
|G_\ve(t,x)-G_\ve(t,y)| &\le \ee|e^{\beta g_\ve(S_t + x)} - e^{\beta g_\ve(S_t + y)}|\\
&\le C \ve|x-y|  \ee(e^{\beta g_\ve(S_t + x)}+e^{\beta g_\ve(S_t + y)})\\
&\le C_1 \ve|x-y|e^{C_2\ve(|x|+|y|)} \ee(e^{C_2\ve |S_t|}). 
\end{align*}
By \eqref{expsn}, this completes the proof. 
\end{proof}
\begin{lmm}\label{gtbound}
For any $s$, $t$ and $x$,
\begin{align*}
|G_\ve(s,x)-G_\ve(t,x)|&\le C_1\ve |t-s|^{1/2} e^{C_2(\ve|x|+\ve^2s +\ve^2t)}. 
\end{align*}
\end{lmm}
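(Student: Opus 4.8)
The plan is to adapt the proof of Lemma~\ref{glip}, replacing the spatial shift by a time increment. Assume without loss of generality that $s\le t$. Using the independence of the increments of the simple symmetric random walk, $S_t$ has the same law as $S_s+W$, where $W$ is an independent copy of $S_{t-s}$; hence
\[
G_\ve(t,x)-G_\ve(s,x)=\ee\bigl[e^{\beta g_\ve(S_s+W+x)}-e^{\beta g_\ve(S_s+x)}\bigr].
\]
The elementary inequality $|e^u-e^v|\le|u-v|(e^u+e^v)$ together with the Lipschitz property of $g$ (which gives $|\beta g_\ve(a+W)-\beta g_\ve(a)|\le C\ve|W|$) bounds the right-hand side in absolute value by $C\ve\,\ee\bigl[|W|\,\bigl(e^{\beta g_\ve(S_s+W+x)}+e^{\beta g_\ve(S_s+x)}\bigr)\bigr]$.

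I would then apply the Cauchy--Schwarz inequality to each of the two terms so as to separate the factor $|W|$ from the exponential. Since $\ee|W|^2=\ee|S_{t-s}|^2=t-s$, this is precisely where the factor $|t-s|^{1/2}$ comes from. For the exponential factors one uses that $S_s+W+x$ is equal in law to $S_t+x$, together with the bound $|g_\ve(z)|\le|g(0)|+C\ve|z|$ and the exponential moment estimate \eqref{expsn}, to get $\ee\bigl[e^{2\beta g_\ve(S_s+W+x)}\bigr]\le C_1e^{C_2(\ve|x|+\ve^2t)}$, and likewise with $s$ in place of $t$ for the other term. Collecting these bounds yields the claimed estimate, with constants depending only on $d$, $\beta$, and $g$.

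The argument is routine; the only points deserving attention are the reduction to an independent increment $W$ distributed as $S_{t-s}$ and independent of $S_s$, and the accompanying distributional identity that $S_s+W$ has the law of $S_t$, which is what allows the exponential moments to be reduced to ones already controlled by \eqref{expsn}. Both facts are immediate consequences of the structure of the simple symmetric random walk, so there is no genuine obstacle.
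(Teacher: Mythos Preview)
Your proposal is correct and follows essentially the same route as the paper: bound $|e^u-e^v|\le |u-v|(e^u+e^v)$, use the Lipschitz property of $g$, apply Cauchy--Schwarz to split off the increment, use $\ee|S_t-S_s|^2=t-s$, and control the exponential moments via \eqref{expsn}. The only cosmetic difference is that you introduce an auxiliary independent increment $W\stackrel{d}{=}S_{t-s}$, whereas the paper works directly with $S_t-S_s$; since $(S_s,S_t)$ and $(S_s,S_s+W)$ have the same joint law, the two arguments are identical.
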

\begin{proof}
Note that by the Lipschitz property of $g$,
\begin{align*}
&|G_\ve(s,x)-G_\ve(t,x)| \\
&\le \ee|e^{\beta g_\ve(S_s + x)} - e^{\beta g_\ve(S_t + x)}|\\
&\le \beta \ee[|g_\ve(S_s + x) - g_\ve(S_t + x)|(e^{\beta g_\ve(S_s + x)} + e^{\beta g_\ve(S_t + x)})]\\
&\le C \ve\ee[|S_s-S_t| (e^{\beta g_\ve(S_s + x)} + e^{\beta g_\ve(S_t + x)})].
\end{align*}
Now applying the Cauchy--Schwarz inequality, the fact that $\ee[(S_s-S_t)^2]=|t-s|$, and the same technique as in the proof of Lemma \ref{glip} to obtain upper bounds on the expected values of the exponential terms, we get the desired result.
\end{proof}
\begin{lmm}\label{gbounds}
We have
\[
C_1 e^{-C_2\ve(|x| + \sqrt{t})} \le G_\ve (t,x) \le  C_3 e^{C_4(\ve|x|+\ve^2 t)}. 
\]
\end{lmm}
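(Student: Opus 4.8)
The plan is to prove the two-sided bound on $G_\ve(t,x) = \ee(e^{\beta g_\ve(S_t + x)}) = \ee(e^{\beta g(\ve S_t + \ve x)})$ directly from the Lipschitz property of $g$, which is the only structural input available, together with the random walk moment estimates from Section \ref{rwsec}.

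For the \emph{upper bound}, I would start from the Lipschitz estimate $g(z) \le g(0) + \text{Lip}(g)\,|z|$, so that $\beta g(\ve S_t + \ve x) \le C + C_2\ve(|S_t| + |x|)$. Taking expectations and pulling out the deterministic factor $e^{C_2 \ve |x|}$, this leaves $\ee(e^{C_2 \ve |S_t|})$, which by \eqref{expsn} is at most $C_1 e^{C_2 \ve^2 t}$ (absorbing constants). Multiplying the two pieces gives $G_\ve(t,x) \le C_3 e^{C_4(\ve|x| + \ve^2 t)}$, exactly the claimed right-hand inequality. This is essentially the same computation that already appears in the proof of Lemma \ref{glim} for the second-moment bound, just with $\beta$ in place of $2\beta$.

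For the \emph{lower bound}, the natural move is to use convexity of the exponential via Jensen's inequality: $G_\ve(t,x) = \ee(e^{\beta g(\ve S_t + \ve x)}) \ge e^{\beta\, \ee[g(\ve S_t + \ve x)]}$. By the Lipschitz property, $\ee[g(\ve S_t + \ve x)] \ge g(0) - \text{Lip}(g)\,\ee[\ve|S_t| + \ve|x|] \ge g(0) - C\ve|x| - C\ve\,\ee|S_t|$, and $\ee|S_t| \le \sqrt{\ee|S_t|^2} = \sqrt{t}$ by Cauchy--Schwarz. This yields $G_\ve(t,x) \ge e^{-C - C_2\ve(|x| + \sqrt{t})} = C_1 e^{-C_2\ve(|x| + \sqrt{t})}$, the claimed left-hand inequality. (Alternatively, one could bound $G_\ve$ below by restricting the expectation to the event $\{|S_t| \le 2\sqrt{t}\}$, which has probability $\ge 3/4$ by Chebyshev as in \eqref{expminus}, and then use the Lipschitz lower bound on $g$ on that event; this avoids Jensen at the cost of a harmless constant.)

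I do not expect any genuine obstacle here: both bounds are routine consequences of the Lipschitz hypothesis on $g$ and the elementary random walk estimates \eqref{expsn} and the $\ee|S_t|^2 = t$ identity. The only mild point to watch is bookkeeping of constants — the $C_i$ depend on $d$, $\beta$, and $g$ (through $g(0)$ and $\text{Lip}(g)$), consistent with the convention announced at the start of this section — and making sure the additive constants from $g(0)$ are absorbed into multiplicative constants $C_1, C_3$ rather than left in the exponent.
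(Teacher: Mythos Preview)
Your proposal is correct and essentially matches the paper's proof: both directions follow from the Lipschitz bound $|g(z)-g(0)|\le \text{Lip}(g)|z|$ together with the random walk estimates of Section~\ref{rwsec}. The only minor difference is in the lower bound: the paper invokes \eqref{expminus} directly (i.e., your parenthetical alternative of restricting to $\{|S_t|\le 2\sqrt{t}\}$), whereas your primary route goes through Jensen's inequality and $\ee|S_t|\le\sqrt{t}$; both are equally valid and yield the same bound.
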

\begin{proof}
By the Lipschitz property of $g$, 
\begin{align*}
 \ee(e^{-C_1 - C_2 \ve (|S_t|+|x|)}) \le G_\ve(t,x) \le \ee(e^{C_3 + C_4\ve(|S_t|+|x|)}).
\end{align*}
The proof is now easily completed by invoking  \eqref{expsn} and \eqref{expminus}.
\end{proof}

\section{Concentration of the height function}\label{concheightsec}
Let $\sigma$ be the law of the noise variables. In this section, $C,C_1,C_2,\ldots$ will denote positive constants depending only on $d$, $\beta$, $\sigma$  and $g$, whose values may change from line to line. If a constant depends on some additional parameter $\theta$, we will denote it by $C(\theta)$.

Fix some $\ve >0$. Recall the function $\mu(\beta) = m(2\beta)/m(\beta)^2$, where $m$ is the moment generating function of the noise variables. Note that $\mu(\beta)\ge 1$ for any $\beta$. Take any $t\in \zz_{\ge1}$ and $x\in \zz^d$. For $q\in \cq(t,x)$, let
\[
Y_q(t,x) := \frac{ \exp\bigl(\beta \sum_{i=1}^t \xi_{i,q_i}\bigr)}{m(\beta)^t},
\]
and let
\begin{align}\label{ydef}
Y(t,x) := \frac{1}{(2d)^t}\sum_{q\in \cq(t,x)} Y_q(t,x).
\end{align}
Also, define
\[
Z_\ve(t,x) := \frac{1}{(2d)^t}\sum_{q\in \cq(t,x)} e^{\beta g_\ve(q_0)} Y_q(t,x). 
\]
Finally, let $F(t,x) := \log Y(t,x)$ and $F_\ve(t,x) := \log Z_\ve(t,x)$. In this section we will prove two lemmas that will show that $F$ and $F_\ve$ have fluctuations of order $1$, with exponentially decaying tails. These technical results will be used later in the proof.
\begin{lmm}\label{techlmm1}
If $\mu(\beta) < 1/\rho_d$, then for any $\theta > 0$, $\ee(e^{-\theta F(t,x)})\le C(\theta)$.
\end{lmm}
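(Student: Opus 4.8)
The plan is to view $F(t,x)=\log Y(t,x)$ as a convex, continuously differentiable function of the finitely many noise variables $\xi_{i,y}$ with $1\le i\le t$, $|y-x|\le t$ on which it depends, and to extract a lower-tail bound from Proposition~\ref{tailprop}. Convexity holds because $Y(t,x)$ is a positive linear combination of exponentials of affine functions of the noise, so $\log Y(t,x)$ is of log-sum-exp type. Writing $\pi=\pi_{t,x}$ for the polymer Gibbs measure on $\cq(t,x)$ with weights proportional to $Y_q(t,x)$, a direct differentiation gives $\partial F(t,x)/\partial\xi_{i,y}=\beta\,\pi(q_i=y)$, hence
\[
|\nabla F(t,x)|^2=\beta^2\sum_{i=1}^t\sum_{y}\pi(q_i=y)^2=\beta^2\sum_{i=1}^t\pi^{\otimes2}(q_i=q_i'),
\]
where $q,q'$ are two independent replicas under $\pi$. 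Since the noise law lies in $\ml$, the relevant noise vector lies in $\ml_n(K)$ for some $K$ depending only on that law, so Proposition~\ref{tailprop} applies once a good event is identified.

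I would apply Proposition~\ref{tailprop} with $a:=-\log 2$, which needs two inputs. First, a uniform lower bound $\pp(F(t,x)\ge-\log2)=\pp(Y(t,x)\ge\tfrac12)\ge c>0$: here $\ee[Y(t,x)]=1$ and, expanding the square and using $\ee[Y_qY_{q'}]=\mu(\beta)^{N(q,q')}$ with $N(q,q'):=|\{1\le i\le t:q_i=q_i'\}|$, one gets $\ee[Y(t,x)^2]=(2d)^{-2t}\sum_{q,q'}\mu(\beta)^{N(q,q')}=\ee[\mu(\beta)^{\widehat N}]$, where $\widehat N$, via the path reversal $q\mapsto(q_t,\dots,q_0)$ and the identity $\{S_n-S_n'\}\stackrel{d}{=}\{S_{2n}\}$ of Section~\ref{rwsec}, has the law of the number of collisions up to time $t$ of two independent simple random walks from the origin and is thus dominated by $N_\infty$ of \eqref{ndef}; so $\ee[Y(t,x)^2]\le\ee[\mu(\beta)^{N_\infty}]<\infty$ by \eqref{npmf} and $\mu(\beta)\rho_d<1$, uniformly in $(t,x)$, and Paley--Zygmund gives $c$. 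Second, $\pp(F(t,x)\ge-\log2,\ |\nabla F(t,x)|>b)$ must be small for a fixed $b$: the key point is to intersect the gradient event with $\{Y\ge\tfrac12\}$ \emph{before} taking expectations, where $\1\le 4Y^2$, so that $\ee[|\nabla F(t,x)|^2\1_{\{Y(t,x)\ge1/2\}}]\le4\ee[Y(t,x)^2|\nabla F(t,x)|^2]$ and
\[
Y(t,x)^2|\nabla F(t,x)|^2=\frac{\beta^2}{(2d)^{2t}}\sum_{q,q'\in\cq(t,x)}N(q,q')\,Y_q(t,x)\,Y_{q'}(t,x),
\]
whose noise-expectation is $\beta^2\,\ee[\widehat N\,\mu(\beta)^{\widehat N}]\le\beta^2\,\ee[N_\infty\mu(\beta)^{N_\infty}]<\infty$, again by \eqref{npmf} and $\mu(\beta)\rho_d<1$, uniformly in $(t,x)$. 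Choosing $b$ large enough (depending only on $d,\beta$ and the noise law) makes this tail at most $c/2$ by Markov, so the quantity $p$ in Proposition~\ref{tailprop} satisfies $p\ge c/2=:p_0>0$.

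Proposition~\ref{tailprop} then yields, for all $u\ge u_0:=\log2+Kb\sqrt{2\log(2/p_0)}$,
\[
\pp(F(t,x)\le-u)\le 2e^{-(u-u_0)^2/(2b^2K^2)},
\]
with $b,K,p_0,u_0$ independent of $(t,x)$. Integrating this tail,
\[
\ee[e^{-\theta F(t,x)}]\le e^{\theta u_0}+2\theta\int_{u_0}^\infty e^{\theta u-(u-u_0)^2/(2b^2K^2)}\,du=:C(\theta)<\infty,
\]
a constant depending only on $\theta,d,\beta$ and the noise law, which proves the lemma.

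The only real difficulty is the second input. One cannot bound $\ee[|\nabla F(t,x)|^2]$ directly, because $|\nabla F|^2$ implicitly carries a factor $Y^{-2}$ and controlling its expectation is essentially the negative-moment bound we are after — the argument would be circular. Restricting to $\{Y\ge\tfrac12\}$, which costs nothing since this event already appears in the Paley--Zygmund step, replaces the quenched replica overlap by the purely annealed quantity $\ee[\widehat N\mu(\beta)^{\widehat N}]$, which has no random denominator and is controlled by the renewal law \eqref{npmf}. The hypothesis $\mu(\beta)<1/\rho_d$ is used precisely to make $\ee[\mu(\beta)^{N_\infty}]$ and $\ee[N_\infty\mu(\beta)^{N_\infty}]$ finite.
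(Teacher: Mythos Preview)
Your proposal is correct and follows essentially the same approach as the paper: both compute $\ee[Y(t,x)^2]=\ee[\mu(\beta)^{N_t}]$ and $\ee[|\nabla Y(t,x)|^2]=\beta^2\ee[N_t\mu(\beta)^{N_t}]$, use Paley--Zygmund to get $\pp(Y\ge 1/2)\ge c$, and then apply Proposition~\ref{tailprop} to the convex function $F$. The only cosmetic difference is that you bound $\pp(Y\ge\tfrac12,\,|\nabla F|>b)$ via $\ee[|\nabla F|^2\1_{\{Y\ge1/2\}}]\le 4\ee[|\nabla Y|^2]$, whereas the paper writes the equivalent chain $\pp(Y\ge e^a,\,|\nabla Y|>bY)\le\pp(|\nabla Y|>be^a)\le\ee|\nabla Y|^2/(be^a)^2$.
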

\begin{proof}
Note that $\ee(Y_q(t,x)) = 1$ for any $t$, $x$ and $q$, and so $\ee(Y(t,x))=1$. Next, note that for any $t$, $x$ and $y$, and any $q\in \cq(t,x)$ and $r\in \cq(t,y)$, 
\begin{align}
\ee(Y_q(t,x)Y_r(t,y)) &= \frac{1}{m(\beta)^{2t}}\prod_{i=1}^t \ee(e^{\beta (\xi_{i,q_i}+ \xi_{i, r_i})}) \notag\\
&= \mu(\beta)^{|q\cap r|},\label{yqyr}
\end{align}
where $q\cap r$ denotes the set of $i$ such that $q_i=r_i$. Thus, 
\begin{align*}
\ee(Y(t,x)^2) &= \frac{1}{(2d)^{2t}}\sum_{q, r\in \cq(t,x)} \ee(Y_q(t,x) Y_r(t,y))\\
&=  \frac{1}{(2d)^{2t}}\sum_{q, r\in \cq(t,x)} \mu(\beta)^{|q\cap r|}\\
&= \ee(\mu(\beta)^{N_t}),
\end{align*}
where $N_t$ is defined as in \eqref{ndef}. By \eqref{npmf}, the assumption that $\mu(\beta) < 1/\rho_d$, and the observation that $\mu(\beta)\ge 1$, this shows that
\begin{align}\label{y2bound}
\ee(Y(t,x)^2)\le \ee(\mu(\beta)^{N_\infty}) \le C.
\end{align}
Also, as noted above, $\ee(Y(t,x)) = 1$. Thus, by \eqref{y2bound} and the Paley--Zygmund second moment inequality,
\begin{align}\label{yhalf}
\pp(Y(t,x)\ge 1/2)\ge C >0.
\end{align}
Next, note that for any $1\le s\le t$ and $y\in \zz^d$,
\begin{align*}
\fpar{}{\xi_{s,y}} F(t,x) &= \frac{1}{Y(t,x)}\fpar{}{\xi_{s,y}} Y(t,x)\\
&= \frac{1}{Y(t,x)(2d)^t}\sum_{q\in \cq(t,x)}\fpar{}{\xi_{s,y}} Y_q(t,x)\\
&= \frac{\beta }{Y(t,x)(2d)^t}\sum_{q\in \cq(t,x), \, q(s)=y} Y_q(t,x).
\end{align*}
Let $\nabla F$ denote the gradient of $F$, considering $F$ as a function of $(\xi_{s,y})_{1\le s\le t, y\in \zz^d}$. The above calculation shows that
\begin{align*}
|\nabla F(t,x)|^2 &= \frac{\beta^2 }{Y(t,x)^2(2d)^{2t}}\sum_{1\le s\le t}\sum_{y\in \zz^d}\sum_{\substack{q,r\in \cq(t,x), \\  q(s)=r(s)=y}} Y_q(t,x)Y_r(t,x)\\
&=  \frac{\beta^2 }{Y(t,x)^2(2d)^{2t}}\sum_{q,r\in \cq(t,x)} |q\cap r|Y_q(t,x)Y_r(t,x).
\end{align*}
Thus, by \eqref{yqyr}, the assumption that $\mu(\beta) < 1/\rho_d$, and the fact that $\mu(\beta)\ge 1$, we get
\begin{align*}
\ee|\nabla Y(t,x)|^2 &= \ee|Y(t,x)\nabla F(t,x)|^2 \\
&= \beta^2 \ee(N_t \mu(\beta)^{N_t}) \le \beta^2\ee(N_\infty \mu(\beta)^{N_\infty}) \le C. 
\end{align*}
Combining this with \eqref{yhalf}, we see that there are constants $a\in \rr$ and $b>0$, depending only on $d$, $\beta$ and $\sigma$, such that 
\begin{align*}
&\pp(F(t,x) \ge a, \, |\nabla F(t,x)|\le b) \\
&\ge \pp(F(t,x)\ge a) - \pp(F(t,x)\ge a, \, |\nabla F(t,x)|> b)\\
&=  \pp(F(t,x)\ge a) - \pp(Y(t,x)\ge e^{a}, \, |\nabla Y(t,x)|> bY(t,x))\\
&\ge \pp(F(t,x)\ge a) - \pp(|\nabla Y(t,x)|> be^a)\\ 
&\ge  C >0. 
\end{align*}
It is a standard fact, easily verifiable by computing second derivatives, that $F(t,x)$ is a convex function of the noise variables. Thus, by Proposition \ref{tailprop} and the above inequality, we have
\begin{align}\label{ftxbd}
\pp(F(t,x)\le -C_1 - u) \le 2 e^{-C_2 u^2}. 
\end{align}
This shows that for any $\theta >0$, 
\begin{align*}
\ee(e^{-\theta F(t,x)}) &= \int_0^\infty \pp(e^{-\theta F(t,x)} \ge v) dv\\
&\le \int_0^\infty \pp(F(t,x) \le -\theta^{-1} \log v) dv\le C(\theta). 
\end{align*}
This completes the proof of the lemma.
\end{proof}
\begin{lmm}\label{techlmm2}
If $\mu(\beta) < 1/\rho_d$, then for any $\ve \in (0,1)$ and $\theta>0$, 
\[
\ee(e^{ -\theta F_\ve(t,x)})\le \psi(\ve|x|+\ve\sqrt{t}+\ve^2t),
\]
where $\psi:(0,\infty)\to (0,\infty)$ is an increasing continuous function that depends only on $\beta$, $d$, $\sigma$, $g$ and $\theta$.
\end{lmm}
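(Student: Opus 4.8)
The plan is to run the argument of Lemma~\ref{techlmm1} essentially verbatim, but now carrying along the dependence of every quantity on $\ve,t,x$ through the single combination $u:=\ve|x|+\ve\sqrt t+\ve^2t$. The three inputs are a first‑moment identity, a second‑moment bound, and a gradient bound for $Z_\ve(t,x)$. Since $\ee(Y_q(t,x))=1$, reversing the time order of the paths $q\in\cq(t,x)$ identifies $\ee(Z_\ve(t,x))=\ee(e^{\beta g_\ve(S_t+x)})=G_\ve(t,x)$, so Lemma~\ref{gbounds} gives $\ee(Z_\ve(t,x))\ge C_1e^{-C_2u}$. For the second moment, \eqref{yqyr} together with the same reversal yields
\[
\ee(Z_\ve(t,x)^2)=\ee\bigl[e^{\beta g_\ve(x+S_t)+\beta g_\ve(x+S_t')}\,\mu(\beta)^{N_t}\bigr],
\]
where $S,S'$ are independent simple random walks from the origin and $N_t$ is as in \eqref{ndef}. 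Choosing an exponent $p'>1$ small enough that $\mu(\beta)^{p'}<1/\rho_d$ (possible because $\mu(\beta)<1/\rho_d$ strictly) and applying Hölder's inequality peels this apart into $\ee[\mu(\beta)^{p'N_t}]^{1/p'}\le\ee[\mu(\beta)^{p'N_\infty}]^{1/p'}\le C$, finite by \eqref{npmf}, times a power of $\ee[e^{p\beta g_\ve(x+S_t)}]$, which by the Lipschitz property of $g$ and \eqref{expsn} is at most $C_1e^{C_2(\ve|x|+\ve^2t)}$. Hence $\ee(Z_\ve(t,x)^2)\le C_1e^{C_2u}$. Differentiating in the noise variables exactly as in Lemma~\ref{techlmm1} gives $\ee|\nabla Z_\ve(t,x)|^2=\beta^2\ee[N_t\mu(\beta)^{N_t}e^{\beta g_\ve(x+S_t)+\beta g_\ve(x+S_t')}]$, and the identical Hölder argument (now using that $N_\infty^{p'}\mu(\beta)^{p'N_\infty}$ is integrable) yields $\ee|\nabla Z_\ve(t,x)|^2\le C_1e^{C_2u}$.

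From these estimates the second‑moment method gives $p_0:=\pp\bigl(Z_\ve(t,x)\ge\tfrac12G_\ve(t,x)\bigr)\ge(\ee Z_\ve)^2/(4\ee Z_\ve^2)\ge C_1e^{-C_2u}$. Since $G_\ve(t,x)$ is deterministic, set $a:=\log\bigl(\tfrac12G_\ve(t,x)\bigr)$, so that $a\ge -C_1-C_2u$ by Lemma~\ref{gbounds} and $\pp(F_\ve(t,x)\ge a)\ge p_0$. Taking $b:=Ce^{C'u}$ with $C,C'$ large enough and using Markov's inequality together with the lower bound on $G_\ve$ from Lemma~\ref{gbounds},
\[
\pp\bigl(|\nabla Z_\ve(t,x)|>\tfrac b2 G_\ve(t,x)\bigr)\le\frac{4\,\ee|\nabla Z_\ve(t,x)|^2}{b^2G_\ve(t,x)^2}\le\frac{p_0}{2}.
\]
Since $\{Z_\ve\ge\tfrac12G_\ve,\ |\nabla Z_\ve|\le\tfrac b2G_\ve\}\subseteq\{F_\ve\ge a,\ |\nabla F_\ve|\le b\}$, we conclude $\pp\bigl(F_\ve(t,x)\ge a,\ |\nabla F_\ve(t,x)|\le b\bigr)\ge p_0/2=:p$.

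Now $F_\ve(t,x)$ is a smooth convex function of the finitely many noise variables $(\xi_{i,y})_{1\le i\le t,\ |y-x|\le t}$ — it is the logarithm of a sum of exponentials of affine functions of them — and this random vector lies in class $\ml_n(K)$ for $K=K(\sigma)$, so Proposition~\ref{tailprop} applies and gives $\pp\bigl(F_\ve(t,x)\le a-Kb\sqrt{2\log(2/p)}-\tau\bigr)\le 2e^{-\tau^2/2b^2K^2}$ for all $\tau\ge0$. Because $p\ge C_1e^{-C_2u}$ and $b\le Ce^{C'u}$, one has $Kb\sqrt{2\log(2/p)}\le C_1e^{C_2u}$ and $2b^2K^2\le C_1e^{C_2u}$, so after adjusting constants $\pp\bigl(F_\ve(t,x)\le -C_1e^{C_2u}-\tau\bigr)\le 2e^{-\tau^2/(C_3e^{C_4u})}$. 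Finally, writing $\ee(e^{-\theta F_\ve(t,x)})=\int_0^\infty\pp\bigl(F_\ve(t,x)\le-\theta^{-1}\log v\bigr)\,dv$ and splitting the integral at $v_0:=e^{\theta C_1e^{C_2u}}$, the part over $[0,v_0]$ contributes at most $v_0$, while on $(v_0,\infty)$ the substitution $w=\theta^{-1}\log v$ turns the tail bound into a one‑sided Gaussian integral bounded by an explicit increasing continuous function of $u$; this yields $\ee(e^{-\theta F_\ve(t,x)})\le\psi(u)$ with $\psi$ of the form $C_1\exp(C_2e^{C_3u})$, depending only on $\beta,d,\sigma,g,\theta$. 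The analytic engine here is just a reprise of Lemma~\ref{techlmm1}, so the only real work is the bookkeeping: one must invoke the random‑walk estimate \eqref{expsn} at full (sub‑Gaussian) strength rather than the trivial bound $|S_t|\le t$ — this is what makes the second‑moment and gradient bounds come out with $\ve^2t$ rather than $\ve t$ — while the lower bound \eqref{expminus} is what contributes the $\ve\sqrt t$ term, so that the final estimate genuinely depends only on $u=\ve|x|+\ve\sqrt t+\ve^2t$.
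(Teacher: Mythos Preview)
Your proof is correct and follows essentially the same route as the paper's: bound $\ee(Z_\ve)$, $\ee(Z_\ve^2)$, and $\ee|\nabla Z_\ve|^2$ via the Lipschitz property of $g$, H\"older's inequality (choosing the exponent so $\mu(\beta)^{p'}<1/\rho_d$), and the random-walk estimates \eqref{expsn}--\eqref{expminus}; then feed these into the second-moment method and Proposition~\ref{tailprop} to get a sub-Gaussian lower tail for $F_\ve$. The only cosmetic difference is that you track the $u$-dependence explicitly through exponential bounds (and invoke Lemma~\ref{gbounds} for the first-moment lower bound, recognizing $\ee(Z_\ve(t,x))=G_\ve(t,x)$), whereas the paper packages each bound into an abstract increasing function $\psi_i$ of $u$; the arguments are otherwise identical.
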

\begin{proof}
The proof is similar to the proof of Lemma \ref{techlmm1}, with minor adjustments. Throughout, we will denote by $\psi_1,\psi_2,\ldots$ increasing, positive, continuous functions of $\ve|x|+\ve\sqrt{t}+\ve^2t$, which depend only on $\beta$, $d$, $\sigma$, $g$, and $\theta$. 

First, note that by the Lipschitz property of $g$, we have $|g_\ve(x)-g_\ve(y)|\le C \ve |x-y|$ for all $x,y$, and $|g_\ve(x)|\le C_1+C_2\ve|x|$ for all $x$. Thus,
\begin{align}
\ee(Z_\ve(t,x)^2) &= \frac{1}{(2d)^{2t}}\sum_{q, r\in \cq(t,x)} e^{\beta g_\ve(q_0) + \beta g_\ve(r_0)}\ee(Y_q(t,x) Y_r(t,y))\notag\\
&\le   \frac{C_1 e^{C_2\ve|x|}}{(2d)^{2t}}\sum_{q, r\in \cq(t,x)} e^{C_3\ve(|q_0-x| +|r_0-x|)} \mu(\beta)^{|q\cap r|}\notag \\
&= C_1 e^{C_2\ve|x|}\ee(e^{C_3\ve(|S_t|+|S_t'|)}\mu(\beta)^{N_t}),\label{zve}
\end{align}
where $\{S_n\}_{n\ge 0}$ and $\{S_n'\}_{n\ge 0}$ are independent simple symmetric random walks started at the origin in $\zz^d$, and $N_t$ is the number of times they intersect up to time $t$. Since $\mu(\beta)<1/\rho_d$, we can choose $\gamma>1$, depending only on $\beta$, $\sigma$, and $d$, such that $\mu(\beta)^\gamma <1/\rho_d$. Let $\gamma' := 2\gamma/(\gamma-1)$, so that $2/\gamma' + 1/\gamma =1$. Applying H\"older's inequality, and the displays \eqref{expsn} and~\eqref{npmf}, we get
\begin{align*}
&\ee(e^{C_3\ve(|S_t|+|S_t'|)}\mu(\beta)^{N_t}) \\
&\le [\ee(e^{C_3\ve \gamma'|S_t|})]^{1/\gamma'} [\ee(e^{C_3\ve \gamma'|S_t'|})]^{1/\gamma'} [\ee(\mu(\beta)^{\gamma N_t})]^{1/\gamma}\\
&\le C_4 e^{C_5 \ve^2 t}. 
\end{align*}
Plugging this into \eqref{zve}, we get
\begin{align}\label{zve2}
\ee(Z_\ve(t,x)^2) \le \psi_1. 
\end{align}
On the other hand, since $\ee(Y_q(t,x)) = 1$  and $g_\ve(q_0)-g_\ve(x)\ge -C\ve|q_0-x|$ for any $q\in \cq(t,x)$, we have
\begin{align}
\ee(Z_\ve(t,x)) &= \frac{1}{(2d)^t}\sum_{q\in \cq(t,x)} e^{\beta g_\ve(q_0)} \notag\\
&\ge C_1 e^{-C_2\ve |x|} \ee(e^{-C_3\ve |S_t|}). \label{zvetx}
\end{align}
Using \eqref{expminus} in \eqref{zvetx} gives the lower bound
\begin{align}\label{zve1}
\ee(Z_\ve(t,x)) &\ge 1/\psi_2. 
\end{align}
Using \eqref{zve2} and \eqref{zve1} and the Paley--Zygmund second moment inequality, we get
\begin{align}\label{ezve1}
\pp(Z_\ve(t,x) \ge1/\psi_3)\ge 1/\psi_4. 
\end{align}
Like in the proof of Lemma \ref{techlmm1}, we get
\begin{align*}
|\nabla Z_\ve(t,x)|^2 &= \frac{\beta^2 }{(2d)^{2t}}\sum_{q,r\in \cq(t,x)} e^{\beta(g_\ve(q_0)+g_\ve(r_0))} |q\cap r|Y_q(t,x)Y_r(t,x),
\end{align*}
which, using \eqref{yqyr}, gives 
\begin{align*}
\ee|\nabla Z_\ve(t,x)|^2  &= \beta^2 \ee(e^{\beta (g_\ve(S_t+x) + g_\ve(S_t'+x))} N_t \mu(\beta)^{N_t}).
\end{align*}
Now proceeding as in the proof of \eqref{zve2} above, we get 
\begin{align}\label{ezve2}
\ee|\nabla Z_\ve(t,x)|^2\le \psi_5.
\end{align}
Therefore, as in the proof of Lemma \ref{techlmm1}, we have that for any $a\in \rr$ and $b>0$,
\begin{align*}
&\pp(F_\ve(t,x) \ge a, \, |\nabla F_\ve(t,x)|\le b) \\
&\ge \pp(Z_\ve(t,x)\ge e^a) - \pp(|\nabla Z_\ve(t,x)| > be^a)\\
&\ge \pp(Z_\ve(t,x)\ge e^a) - \frac{\psi_5}{b^2e^{2a}}.
\end{align*}
Thus, choosing $a =  -\log \psi_3$ and $b = \sqrt{2\psi_4\psi_5}\psi_3$, and using \eqref{ezve1} and \eqref{ezve2} we get
\begin{align*}
\pp(F_\ve(t,x) \ge a, \, |\nabla F_\ve(t,x)|\le b) &\ge  1/2\psi_4.
\end{align*}
Again, by computing second derivatives, it is easy to verify that $F_\ve(t,x)$ is a convex function of the noise variables. By Proposition \ref{tailprop}, this shows that for all $t>0$,
\begin{align*}
\pp(F_\ve(t,x) \le - \psi_6 - t) &\le 2e^{-t^2/\psi_7 }.
\end{align*}
As in the proof of Lemma \ref{techlmm1}, this implies that $\ee(e^{-\theta F_\ve(t,x)}) \le \psi_8$.  
This completes the proof of the lemma.
\end{proof}

\section{The main argument}
This section contains the main body of the proof of Theorem \ref{polymerthm}. We will continue using the notations introduced in the previous sections. For the reader's convenience, the section is divided into small subsections. Choose two integers $1\le s<t$. These integers will be fixed throughout this section. We will also fix $x\in \rr^d$, $\ve\in (0,1)$, and $\beta>0$ such that $\mu(\beta) < 1/\rho_d$.

In this section, $C,C_1,C_2,\ldots$ will denote positive constants whose values may depend only on $\beta$, $d$, $\sigma$, $g$, and the sum $\ve|x|+\ve(t-s)+\ve \sqrt{t}+ \ve^2 t$. Furthermore, we will require that the dependence on the last item is increasing and continuous in nature. The reason is that we will later send $\ve$ to $0$, varying  $s$, $t$ and $x$ such that this quantity remains bounded. If a constant depends on some additional parameter $\theta$, we will denote it by $C(\theta)$. 

\subsection{The renormalized partition function}
Let $\cq(s,t,x)$ be the set of all nearest-neighbor paths $q = (q_s, q_{s+1},\ldots,q_t)$ with $q_t =x$. For $q\in \cq(s,t,x)$, let
\[
Y_q(s,t,x) := \frac{ \exp\bigl(\beta \sum_{i=s+1}^t \xi_{i,q_i}\bigr)}{m(\beta)^{t-s}},
\]
and define
\[
Y(s,t,x) := \frac{1}{(2d)^{t-s}}\sum_{q\in \cq(s,t,x)} Y_q(s,t,x).
\]
Let $D(s,t,x)$ be the set of possible values of $q_s$ for $q\in \cq(s,t,x)$. For each $y\in D(s,t,x)$, let $\cq(s,t,x,y)$ for the set of $q\in \cq(s,t,x)$ with $q_s=y$, and define
\[
Y(s,t,x,y) := \frac{1}{(2d)^{t-s}}\sum_{q\in \cq(s,t,x,y)} Y_q(s,t,x).
\]
Define
\[
\zeta(s,t,x,y) := \frac{Y(s,t,x,y)}{Y(s,t,x)}. 
\]
Note that $\zeta(s,t,x,\cdot)$ is a probability mass function on $D(s,t,x)$. Next, define
\begin{align*}
W_\ve(s,t,x) &= \frac{Z_\ve(t,x)}{Y(s,t,x)}. 
\end{align*}
We will refer to $W_\ve(s,t,x)$ as the {\it renormalized partition function} (as opposed to $Z_\ve(t,x)$, which is the unrenormalized partition function). Our goal in this section is to show that this random variable is approximately equal to the deterministic quantity $G_\ve(s,x)$ with high probability. The first step is the following lemma, which shows that $W_\ve(s,t,x)$ is a weighted average of $Z_\ve(s,y)$ over $y\in D(s,t,x)$.
\begin{lmm}\label{wvelmm}
We have 
\begin{align*}
W_\ve(s,t,x) &= \sum_{y\in D(s,t,x)} \zeta(s,t,x,y) Z_\ve(s,y). 
\end{align*}
\end{lmm}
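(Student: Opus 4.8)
The plan is to prove the identity by splitting each length-$t$ path ending at $x$ according to its position $q_s$ at the intermediate time $s$; this turns the claim into the elementary statement that the partition sums factorize multiplicatively across the cut at time $s$.

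First I would record the path-decomposition bijection. A nearest-neighbor path $q=(q_0,q_1,\ldots,q_t)\in\cq(t,x)$ is determined by the pair formed by its initial segment $q'=(q_0,\ldots,q_s)$, which ranges over $\cq(s,q_s)$, and its terminal segment $q''=(q_s,q_{s+1},\ldots,q_t)$, which ranges over $\cq(s,t,x,q_s)$; conversely any such pair sharing a common endpoint $q_s=y$ glues back to an element of $\cq(t,x)$. Writing $y$ for the shared value $q_s$, this gives
\[
\sum_{q\in\cq(t,x)}(\,\cdot\,)=\sum_{y\in D(s,t,x)}\ \sum_{q'\in\cq(s,y)}\ \sum_{q''\in\cq(s,t,x,y)}(\,\cdot\,).
\]
Here $D(s,t,x)$ is exactly the set of $y$ with $\cq(s,t,x,y)\neq\emptyset$; if in addition $\cq(s,y)=\emptyset$ the inner double sum over that $y$ is empty, contributing $0$, which is consistent with $Z_\ve(s,y)=0$, so including such $y$ in the outer sum causes no harm.

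Next I would plug the weights into this decomposition. Using $(2d)^t=(2d)^s(2d)^{t-s}$, $m(\beta)^t=m(\beta)^s m(\beta)^{t-s}$, the additivity $\sum_{i=1}^t\xi_{i,q_i}=\sum_{i=1}^s\xi_{i,q'_i}+\sum_{i=s+1}^t\xi_{i,q''_i}$ (note $q'_i=q_i$ for $i\le s$ and $q''_i=q_i$ for $i\ge s$), and the fact that $g_\ve(q_0)=g_\ve(q'_0)$ depends only on the initial segment, each summand $\frac{1}{(2d)^t}e^{\beta g_\ve(q_0)}Y_q(t,x)$ factors as $\bigl[\frac{1}{(2d)^s}e^{\beta g_\ve(q'_0)}Y_{q'}(s,y)\bigr]\cdot\bigl[\frac{1}{(2d)^{t-s}}Y_{q''}(s,t,x)\bigr]$. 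Summing $q'$ over $\cq(s,y)$ produces $Z_\ve(s,y)$ and summing $q''$ over $\cq(s,t,x,y)$ produces $Y(s,t,x,y)$, so $Z_\ve(t,x)=\sum_{y\in D(s,t,x)}Z_\ve(s,y)\,Y(s,t,x,y)$. Dividing both sides by $Y(s,t,x)$ and recalling $\zeta(s,t,x,y)=Y(s,t,x,y)/Y(s,t,x)$ and $W_\ve(s,t,x)=Z_\ve(t,x)/Y(s,t,x)$ yields the assertion.

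I do not expect a genuine obstacle here; the only points demanding a little care are bookkeeping ones --- matching the normalizing powers of $2d$ and of $m(\beta)$ across the cut, checking that $g_\ve$ depends only on the starting point of the path, and verifying that the outer index set $D(s,t,x)$ is chosen so that the factorization exhausts all of $\cq(t,x)$ while adding nothing spurious. Once the splitting bijection is stated cleanly, the rest is a one-line computation.
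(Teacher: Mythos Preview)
Your proposal is correct and follows essentially the same approach as the paper: split each path in $\cq(t,x)$ at time $s$, factor the weight multiplicatively across the cut (matching the powers of $2d$ and $m(\beta)$), sum the initial segment to produce $Z_\ve(s,y)$ and the terminal segment to produce $Y(s,t,x,y)$, then divide by $Y(s,t,x)$. Your extra remarks on bookkeeping (the edge case $\cq(s,y)=\emptyset$, the dependence of $g_\ve$ only on $q_0$) are fine but not needed beyond what the paper does.
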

\begin{proof}
Note that any $q= (q_0,\ldots,q_t)\in \cq(t,x)$ can be broken into two parts $(q_s,\ldots,q_t)\in \cq(s,t,x)$ and $(q_0,\ldots,q_s)\in \cq(s,q_s)$. Thus,
\begin{align*}
&Z_\ve(t,x) = \frac{1}{(2d)^t}\sum_{q\in \cq(t,x)} e^{\beta g_\ve(q_0)} \frac{ \exp\bigl(\beta \sum_{i=1}^t \xi_{i,q_i}\bigr)}{m(\beta)^t}\\
&=  \frac{1}{(2d)^t}\sum_{q\in \cq(s,t,x)}  \sum_{r\in \cq(s,q_s)} e^{\beta g_\ve(r_0)} \frac{ \exp\bigl(\beta \sum_{i=s+1}^t \xi_{i,q_i}\bigr)}{m(\beta)^{t-s}}\frac{ \exp\bigl(\beta \sum_{i=1}^s \xi_{i,r_i}\bigr)}{m(\beta)^{s}}\\
&=  \frac{1}{(2d)^{t-s}}\sum_{q\in \cq(s,t,x)}  \frac{ \exp\bigl(\beta \sum_{i=s+1}^t \xi_{i,q_i}\bigr)}{m(\beta)^{t-s}}Z_\ve(s, q_s)\\
&= \sum_{y\in D(s,t,x)} Y(s,t,x,y) Z_\ve(s, y). 
\end{align*}
Dividing both sides by $Y(s,t,x)$ gives the desired identity.
\end{proof}
\subsection{Expectation of the renormalized partition function}
Let $\mf$ be the $\sigma$-algebra generated by the random variables $\{\xi_{u,y}: s+1\le u\le t, \ y\in \zz^d\}$. 
Let $\ee'$, $\pp'$, $\var'$ and $\cov'$  denote conditional expectation, conditional probability, conditional variance and conditional covariance given $\mf$. The following lemma shows that $\ee'(W_\ve(s,t,x))$ is a weighted average of $G_\ve(s,y)$ over $y\in D(s,t,x)$.
\begin{lmm}\label{epwve}
We have
\[
\ee'(W_\ve(s,t,x)) =  \sum_{y\in D(s,t,x)} \zeta(s,t,x,y)G_\ve(s,y).
\]
\end{lmm}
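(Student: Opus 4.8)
\textbf{Proof proposal for Lemma \ref{epwve}.}

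The plan is to recall from Lemma \ref{wvelmm} that
\[
W_\ve(s,t,x) = \sum_{y\in D(s,t,x)} \zeta(s,t,x,y)\, Z_\ve(s,y),
\]
and then to take conditional expectation given $\mf$, observing that the weights $\zeta(s,t,x,y) = Y(s,t,x,y)/Y(s,t,x)$ are built only out of the noise variables $\{\xi_{i,q_i}: s+1 \le i \le t\}$, hence are $\mf$-measurable. Therefore, by linearity of conditional expectation and by pulling the $\mf$-measurable factors out,
\[
\ee'(W_\ve(s,t,x)) = \sum_{y\in D(s,t,x)} \zeta(s,t,x,y)\, \ee'(Z_\ve(s,y)).
\]
So it remains to check that $\ee'(Z_\ve(s,y)) = G_\ve(s,y)$ for each $y$.

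For that last point, I would unfold the definition $Z_\ve(s,y) = (2d)^{-s}\sum_{q\in \cq(s,y)} e^{\beta g_\ve(q_0)} Y_q(s,y)$, where $Y_q(s,y) = \exp(\beta\sum_{i=1}^s \xi_{i,q_i})/m(\beta)^s$. Since $Z_\ve(s,y)$ depends only on the noise variables $\{\xi_{i,\cdot}: 1\le i\le s\}$, which are independent of $\mf$ (recall $\mf$ is generated by $\{\xi_{u,\cdot}: s+1\le u\le t\}$), the conditional expectation given $\mf$ is just the ordinary expectation: $\ee'(Z_\ve(s,y)) = \ee(Z_\ve(s,y))$. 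Using $\ee(Y_q(s,y)) = 1$ for every nearest-neighbor path $q$ (as already noted in the proof of Lemma \ref{techlmm1}), we get
\[
\ee(Z_\ve(s,y)) = \frac{1}{(2d)^s}\sum_{q\in \cq(s,y)} e^{\beta g_\ve(q_0)} = \ee\bigl(e^{\beta g_\ve(S_s + y)}\bigr) = G_\ve(s,y),
\]
where the middle equality is the standard identification of the uniform average over length-$s$ nearest-neighbor paths ending at $y$ with the law of $S_s + y$ (equivalently, $y$ minus a reversed simple random walk, which has the same law as $S_s + y$ by symmetry). Combining the two displays gives the claimed identity.

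There is really no serious obstacle here; the lemma is essentially bookkeeping. The one point that requires a moment's care — and the closest thing to a ``main obstacle'' — is the measurability split: one must verify that $\zeta(s,t,x,\cdot)$ is $\mf$-measurable (which holds because $Y(s,t,x)$ and $Y(s,t,x,y)$ involve only $\xi_{i,\cdot}$ for $s+1\le i\le t$) while $Z_\ve(s,y)$ is independent of $\mf$ (it involves only $\xi_{i,\cdot}$ for $1\le i\le s$). Once this decomposition of the relevant noise variables is in place, the conditional expectation factors as above and the computation is immediate. I would also remark that $D(s,t,x)$ and the weights $\zeta$ do not depend on $\ve$ or $g$ at all, so no uniformity in those parameters is needed for this step.
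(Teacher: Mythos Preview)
Your proposal is correct and follows essentially the same approach as the paper's own proof: use Lemma \ref{wvelmm}, pull out the $\mf$-measurable weights $\zeta(s,t,x,y)$, use independence of $Z_\ve(s,y)$ from $\mf$ to replace $\ee'$ by $\ee$, and then identify $\ee(Z_\ve(s,y))=G_\ve(s,y)$. The paper's version is just more terse.
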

\begin{proof}
Since $\zeta(s,t,x,y)$ is $\mf$-measurable and $Z_\ve(s,y)$ is independent of $\mf$ for any $y$, we have
\begin{align*}
\ee'(W_\ve(s,t,x)) &=  \sum_{y\in D(s,t,x)} \zeta(s,t,x,y) \ee(Z_\ve(s,y))\\
&=  \sum_{y\in D(s,t,x)} \zeta(s,t,x,y)\biggl(\frac{1}{(2d)^s}\sum_{q\in \cq(s,y)} e^{\beta g_\ve(q_0)}\biggr).
\end{align*}
To complete the proof, just note that the term inside the bracket on the right side is nothing but $G_\ve(s,y)$.
\end{proof}
As a corollary of the above lemma and Lemma \ref{glip}, we obtain the following result, which shows that $\ee'(W_\ve(s,t,x))$ is close to $G_\ve(s,x)$ if $t-s = o(1/\ve)$.
\begin{cor}\label{epwcor}
We have
\[
|\ee'(W_\ve(s,t,x)) - G_\ve(s,x)| \le C\ve (t-s) 
\]
\end{cor}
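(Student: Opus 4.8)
The plan is to combine Lemma \ref{epwve} with the Lipschitz estimate for $G_\ve$ from Lemma \ref{glip}, exploiting the fact that the weights $\zeta(s,t,x,\cdot)$ form a probability mass function supported on points $y$ that are close to $x$. Concretely, since $\sum_{y\in D(s,t,x)}\zeta(s,t,x,y) = 1$, I would write
\begin{align*}
\ee'(W_\ve(s,t,x)) - G_\ve(s,x) = \sum_{y\in D(s,t,x)} \zeta(s,t,x,y)\bigl(G_\ve(s,y) - G_\ve(s,x)\bigr),
\end{align*}
and therefore
\begin{align*}
|\ee'(W_\ve(s,t,x)) - G_\ve(s,x)| \le \sum_{y\in D(s,t,x)} \zeta(s,t,x,y)\,|G_\ve(s,y) - G_\ve(s,x)|.
\end{align*}

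Next I would observe that any $q\in \cq(s,t,x)$ is a nearest-neighbor path of length $t-s$ ending at $x$, so every $y\in D(s,t,x)$ satisfies $|y - x|\le t-s$; moreover $|y|\le |x| + (t-s)$. Feeding this into Lemma \ref{glip} (with the two spatial arguments $y$ and $x$, and time $s$) gives
\begin{align*}
|G_\ve(s,y) - G_\ve(s,x)| \le C_1\,\ve|y-x|\,e^{C_2(\ve|y| + \ve|x| + \ve^2 s)} \le C_1\,\ve(t-s)\,e^{C_2(2\ve|x| + \ve(t-s) + \ve^2 t)}.
\end{align*}
The exponential factor is bounded by a constant of the allowed type, because this section's convention is that constants may depend, in an increasing continuous fashion, on $\ve|x| + \ve(t-s) + \ve\sqrt{t} + \ve^2 t$. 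Thus each summand is at most $C\ve(t-s)\,\zeta(s,t,x,y)$, and summing over $y$ (using that the weights sum to $1$) yields the claimed bound $|\ee'(W_\ve(s,t,x)) - G_\ve(s,x)|\le C\ve(t-s)$.

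There is essentially no obstacle here; the only point requiring a moment's care is bookkeeping the constants so that they genuinely fall under the section's stated dependence convention — in particular checking that $\ve|y|\le \ve|x| + \ve(t-s)$ and $\ve^2 s\le \ve^2 t$ keep the exponent inside the controlled quantity $\ve|x| + \ve(t-s) + \ve\sqrt{t} + \ve^2 t$, so that the exponential is absorbed into $C$. Everything else is the elementary observation that averaging a function against a probability measure supported near $x$ cannot move it far from its value at $x$ when the function is Lipschitz at scale $\ve$.
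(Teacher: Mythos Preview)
Your proposal is correct and follows essentially the same approach as the paper: both use Lemma~\ref{epwve} together with the probability-mass-function property of $\zeta(s,t,x,\cdot)$, bound each $|G_\ve(s,y)-G_\ve(s,x)|$ via Lemma~\ref{glip}, and use $|x-y|\le t-s$ for $y\in D(s,t,x)$. Your additional remarks on absorbing the exponential factor into the constant under the section's convention are implicit in the paper's argument.
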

\begin{proof}
Since $\zeta(s,t,x,\cdot)$ is a probability mass function on $D(s,t,x)$, we have 
\begin{align*}
|\ee'(W_\ve(s,t,x)) - G_\ve(s,x)| &= \biggl|\sum_{y\in D(s,t,x)} \zeta(s,t,x,y) (G_\ve(s,y)-G_\ve(s,x))\biggr|\\
&\le \sum_{y\in D(s,t,x)} \zeta(s,t,x,y) |G_\ve(s,y)-G_\ve(s,x)|.
\end{align*}
To complete the proof, apply Lemma \ref{glip} to bound $|G_\ve(s,y)-G_\ve(s,x)|$ and use the fact that $|x-y|\le t-s$ for $y\in D(s,t,x)$.
\end{proof}

\subsection{Variance of the renormalized partition function}\label{varsection}
We will now show that the renormalized partition function has small conditional variance if $\ve$ is small. The first step is to get a bound for covariances. 
\begin{lmm}\label{covlmm}
For any $y,y'\in D(s,t,x)$,
\[
\cov(Z_\ve(s,y), Z_\ve(s,y')) \le C_1 \kappa(y,y')^{C_2}.
\]
\end{lmm}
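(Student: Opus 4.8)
The plan is to write the covariance as an explicit sum over pairs of polymer paths, re-index it via time reversal so that it becomes an expectation over two independent simple random walks, and then apply Hölder's inequality together with the intersection-count identity \eqref{covprep} to extract the factor $\kappa(y,y')$. Expanding $Z_\ve(s,y) = (2d)^{-s}\sum_{q\in\cq(s,y)} e^{\beta g_\ve(q_0)}Y_q(s,y)$ and the analogous expression for $Z_\ve(s,y')$, and using $\ee(Y_q(s,y)) = 1$ together with \eqref{yqyr} (with $t$ there replaced by $s$), one obtains the exact formula
\[
\cov(Z_\ve(s,y),Z_\ve(s,y')) = \frac{1}{(2d)^{2s}}\sum_{\substack{q\in\cq(s,y)\\ r\in\cq(s,y')}} e^{\beta g_\ve(q_0)+\beta g_\ve(r_0)}\bigl(\mu(\beta)^{|q\cap r|}-1\bigr),
\]
where $q\cap r$ denotes the set of times at which $q$ and $r$ agree. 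Since $\mu(\beta)\ge 1$, the covariance is in particular nonnegative.

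Next I would reverse time. Writing each $q\in\cq(s,y)$ backwards identifies it with a nearest-neighbor path of length $s$ started at $y$, with $q_0$ becoming the endpoint; doing this for both $q$ and $r$ and letting $\hat S$, $\hat S'$ be independent simple symmetric random walks started at $y$ and $y'$ respectively, the displayed sum becomes an expectation,
\[
\cov(Z_\ve(s,y),Z_\ve(s,y')) = \ee\Bigl[e^{\beta g_\ve(\hat S_s)+\beta g_\ve(\hat S'_s)}\bigl(\mu(\beta)^{M}-1\bigr)\Bigr],
\]
where $M$ is the number of intersections of $\hat S$ and $\hat S'$ over the relevant time window, so that $M\le N_\infty^{y,y'}$ and $M=0$ whenever $\tau^{y,y'}=\infty$ (notation as in Section \ref{rwsec}). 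Bounding $\mu(\beta)^{M}-1 \le \mu(\beta)^{N_\infty^{y,y'}}\1_{\tau^{y,y'}<\infty}$, it remains to estimate $\ee\bigl[e^{\beta g_\ve(\hat S_s)+\beta g_\ve(\hat S'_s)}\mu(\beta)^{N_\infty^{y,y'}}\1_{\tau^{y,y'}<\infty}\bigr]$.

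Since the exponential weight and the intersection term are not independent, I would decouple them with Hölder's inequality, choosing $q>1$ close enough to $1$ that $\mu(\beta)^q<1/\rho_d$ (possible because $\mu$ is continuous and $\mu(\beta)<1/\rho_d$) and letting $p$ be its conjugate exponent. The factor $\bigl(\ee[e^{p\beta(g_\ve(\hat S_s)+g_\ve(\hat S'_s))}]\bigr)^{1/p}$ is bounded by a constant of the permitted type, using the Lipschitz bound $|g_\ve(z)|\le C_1+C_2\ve|z|$, the estimates $\ve|y|,\ve|y'|\le \ve|x|+\ve(t-s)$ valid for $y,y'\in D(s,t,x)$, and \eqref{expsn} (cf.\ the proofs of Lemmas \ref{glim} and \ref{techlmm2}). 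For the remaining factor, set $f(k):=\mu(\beta)^{qk}-1$, so that $f(0)=0$, $f\ge 0$, and $\ee[f(N_\infty)] = \ee[\mu(\beta)^{qN_\infty}]-1<\infty$ by \eqref{npmf} and $\mu(\beta)^q\rho_d<1$; since $N_\infty^{y,y'}=0$ on $\{\tau^{y,y'}=\infty\}$, \eqref{covprep} gives
\[
\ee\bigl[\mu(\beta)^{qN_\infty^{y,y'}}\1_{\tau^{y,y'}<\infty}\bigr] = \ee[f(N_\infty^{y,y'})] + \kappa(y,y') = \bigl(\ee[f(N_\infty)]+1\bigr)\,\kappa(y,y').
\]
Combining the two factors yields $\cov(Z_\ve(s,y),Z_\ve(s,y'))\le C_1\,\kappa(y,y')^{1/q}$, which is the claimed bound with $C_2=1/q$ (indeed any number in $(0,1)$).

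The argument is essentially bookkeeping built on Sections \ref{rwsec} and \ref{concheightsec}; the one point requiring care is the decoupling in the last step. One cannot separate $e^{\beta g_\ve(\hat S_s)+\beta g_\ve(\hat S'_s)}$ from $\mu(\beta)^{N_\infty^{y,y'}}$ by independence, and the Hölder exponent on the intersection term must be kept strictly inside the $L^2$ region $\mu(\beta)^q<1/\rho_d$ so that the moment $\ee[\mu(\beta)^{qN_\infty}]$ stays finite — which is exactly why the statement yields $\kappa(y,y')^{C_2}$ for some $C_2<1$ rather than $\kappa(y,y')$ itself.
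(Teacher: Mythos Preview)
Your argument is correct and follows essentially the same route as the paper's proof: expand the covariance as a double sum over paths, reinterpret it as an expectation over two independent simple random walks started at $y$ and $y'$, then decouple the exponential weight from the intersection factor via H\"older with an exponent $q>1$ chosen so that $\mu(\beta)^q<1/\rho_d$, and finally invoke \eqref{covprep} and \eqref{expsn}. The only cosmetic differences are that the paper splits H\"older three ways (separating the two exponential factors) rather than two, and bounds $\ee|\mu(\beta)^{N_s^{y,y'}}-1|^\gamma$ directly rather than first passing to $N_\infty^{y,y'}\1_{\tau^{y,y'}<\infty}$; neither changes anything of substance.
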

\begin{proof}
By the Lipschitz property of $g$ and the fact that $|y-x|\le t-s$ for any $y\in D(s,t,x)$, we have that for any $y,y'\in D(s,t,x)$, 
\begin{align*}
&\cov(Z_\ve(s,y), Z_\ve(s,y')) \\
&= \frac{1}{(2d)^{2s}}\sum_{q\in \cq(s,y),\, r\in \cq(s,y')} e^{\beta (g_\ve(q_0)+g_\ve(r_0))} \cov(Y_q(s,y), Y_r(s,y'))\\
&\le  \frac{C_1}{(2d)^{2s}}\sum_{q\in \cq(t,y),\, r\in \cq(t,y')} e^{C_2\ve (|q_0-y|+|r_0-y'|)}(\mu(\beta)^{|q\cap r|} - 1)\\
&= C_1 \ee(e^{C_2\ve (|S_t -y|+|S_t'-y'|)}(\mu(\beta)^{N_t^{y,y'}} - 1)),
\end{align*}
where $\{S_n\}_{n\ge 0}$ and $\{S_n'\}_{n\ge 0}$ denote independent simple symmetric random walks started from $y$ and $y'$, and $N_t^{y,y'}$ is the number of times they intersect up to time $t$. Since $\mu(\beta)<1/\rho_d$, we can choose $\gamma>1$, depending only on $\beta$, $\sigma$, and $d$, such that $\mu(\beta)^\gamma <1/\rho_d$. Let $\gamma' := 2\gamma/(\gamma-1)$, so that $2/\gamma' + 1/\gamma =1$. Applying H\"older's inequality, \eqref{expsn}, and \eqref{covprep}, we get
\begin{align*}
&\ee(e^{C_2\ve (|S_t -y|+|S_t'-y'|)}(\mu(\beta)^{N_t^{y,y'}} - 1))\\
&\le [\ee(e^{C_2\gamma'\ve |S_t -y|})]^{1/\gamma'}[\ee(e^{C_2\gamma'\ve |S_t' -y'|})]^{1/\gamma'}[\ee|\mu(\beta)^{N_t^{y,y'}} - 1|^\gamma]^{1/\gamma}\\
&\le C_3 [\ee(\mu(\beta)^{\gamma N_\infty})]^{1/\gamma} \kappa(y,y')^{1/\gamma},
\end{align*}
where $N_\infty$ is the number of intersections of two independent simple symmetric random walks in $\zz^d$, started at the origin. Applying \eqref{npmf} completes the proof.
\end{proof}
The next lemma gives an upper bound for the $\var'(W_\ve(s,t,x))$, obtained using the bound on covariances from Lemma \ref{covlmm}.
\begin{lmm}\label{vpwlmm}
We have
\begin{align*}
\var'(W_\ve(s,t,x)) &\le C_1\sum_{y,y'\in D(s,t,x)} \zeta(s,t,x,y)\zeta(s,t,x,y')\kappa(y,y')^{C_2}.
\end{align*}
\end{lmm}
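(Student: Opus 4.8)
The plan is to expand the conditional variance using the representation from Lemma \ref{wvelmm}, reduce the conditional covariances to unconditional ones by an independence argument, and then invoke Lemma \ref{covlmm}.

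First, recall from Lemma \ref{wvelmm} that
\[
W_\ve(s,t,x) = \sum_{y\in D(s,t,x)} \zeta(s,t,x,y)\, Z_\ve(s,y),
\]
where the weights $\zeta(s,t,x,y)$ are $\mf$-measurable (they are built from the noise variables indexed by times $s+1,\ldots,t$). Hence, conditionally on $\mf$, the quantity $W_\ve(s,t,x)$ is a finite linear combination of the random variables $\{Z_\ve(s,y)\}_{y\in D(s,t,x)}$ with deterministic (i.e., $\mf$-measurable) coefficients. Bilinearity of conditional covariance therefore gives
\[
\var'(W_\ve(s,t,x)) = \sum_{y,y'\in D(s,t,x)} \zeta(s,t,x,y)\zeta(s,t,x,y')\, \cov'(Z_\ve(s,y), Z_\ve(s,y')).
\]

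Next, I would observe that the entire collection $\{Z_\ve(s,y)\}_{y\in\zz^d}$ is a function of the noise variables $\{\xi_{i,z}: 1\le i\le s,\ z\in\zz^d\}$, which are independent of $\mf$ since $\mf$ is generated by $\{\xi_{u,z}: s+1\le u\le t,\ z\in\zz^d\}$. Consequently, the joint conditional law of this collection given $\mf$ equals its unconditional joint law, and in particular $\cov'(Z_\ve(s,y), Z_\ve(s,y')) = \cov(Z_\ve(s,y), Z_\ve(s,y'))$ for every pair $y,y'$. Now apply Lemma \ref{covlmm}, which gives $\cov(Z_\ve(s,y), Z_\ve(s,y')) \le C_1\kappa(y,y')^{C_2}$. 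Since $\zeta(s,t,x,\cdot)$ is a probability mass function and in particular nonnegative, substituting this bound into the display above yields
\[
\var'(W_\ve(s,t,x)) \le C_1 \sum_{y,y'\in D(s,t,x)} \zeta(s,t,x,y)\zeta(s,t,x,y')\kappa(y,y')^{C_2},
\]
as claimed.

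There is no substantial obstacle here; the only point requiring care is the measurability and independence bookkeeping that legitimizes pulling the $\zeta$'s out of the conditional variance and replacing the conditional covariances of the $Z_\ve(s,\cdot)$ by unconditional ones. Everything else is a direct appeal to the already-established Lemmas \ref{wvelmm} and \ref{covlmm}.
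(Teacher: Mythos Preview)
Your proposal is correct and follows essentially the same approach as the paper: expand $W_\ve(s,t,x)$ via Lemma~\ref{wvelmm}, use that the weights $\zeta(s,t,x,y)$ are $\mf$-measurable while $\{Z_\ve(s,y)\}$ is independent of $\mf$ to replace conditional covariances by unconditional ones, and then apply Lemma~\ref{covlmm}. The paper's proof is slightly terser but identical in substance.
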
 
\begin{proof}
Since $\zeta(s,t,x,y)$ is $\mf$-measurable for each $y$, and $\{Z_\ve(s,y)\}_{y\in \zz^d}$ is independent of $\mf$, we have
\begin{align*}
&\var'(W_\ve(s,t,x)) \\
&= \sum_{y,y'\in D(s,t,x)} \zeta(s,t,x,y)\zeta(s,t,x,y') \cov(Z_\ve(s,y), Z_\ve(s,y')).
\end{align*}
The proof is now completed by invoking Lemma \ref{covlmm}.
\end{proof}
Next, we use the above lemma, and Lemma \ref{kappalmm}, to obtain a bound on the expected value of $\var'(W_\ve(s,t,x))$. 
\begin{lmm}\label{evpwlmm}
We have
\[
\ee[\var'(W_\ve(s,t,x))] \le C_1 (t-s)^{-C_2}. 
\]
\end{lmm}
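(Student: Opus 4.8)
The plan is to combine Lemma~\ref{vpwlmm} with Lemma~\ref{kappalmm} by taking expectations, but there is a subtlety: the weights $\zeta(s,t,x,y)$ are random and correlated with the walk intersections, so we cannot directly apply Lemma~\ref{kappalmm} to the right-hand side of Lemma~\ref{vpwlmm}. First I would recall that $\zeta(s,t,x,\cdot)$ is a probability mass function on $D(s,t,x)$, so the double sum in Lemma~\ref{vpwlmm} is a convex combination of terms $\kappa(y,y')^{C_2}$; in particular it is bounded by $\max_{y,y'\in D(s,t,x)}\kappa(y,y')^{C_2}$. But that crude bound is not small, since $y$ and $y'$ can coincide, making $\kappa = 1$. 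So the better route is to exploit the structure of $\zeta$: it is itself the endpoint distribution of a polymer, and the key probabilistic input (alluded to in the introduction as ``delocalization of the endpoint distribution'') is that two independent copies of this endpoint distribution rarely land close together.

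Concretely, here is how I would proceed. Introduce two independent simple symmetric random walks $\{S_k\}$ and $\{S_k'\}$ of length $t-s$ ending at $x$ (equivalently, run backward from $x$), so that $D(s,t,x)$ is the set of reachable starting points and $\zeta(s,t,x,y)$ is proportional to the quenched weight $Y(s,t,x,y)$. The expression $\sum_{y,y'}\zeta(y)\zeta(y')\kappa(y,y')^{C_2}$ is then $\ee'\big[\kappa(S_s, S_s')^{C_2}\big]$ where the walks are weighted (tilted) by the disorder in the strip $(s,t]$ and conditioned to end at $x$. Taking the outer expectation over the disorder, I would use the $L^2$ bound from the high-temperature regime: since $\mu(\beta)<1/\rho_d$, the partition function $Y(s,t,x)$ has second moment bounded below (after normalization) — concretely $\ee[Y(s,t,x)]$ is a fixed constant times $\pp(S_{t-s}\text{ lands in }D)$, and the size-biasing introduced by $\zeta$ can be absorbed by a Cauchy--Schwarz / change-of-measure argument that costs only a factor $\ee[Y(s,t,x)^2/\ee Y(s,t,x)^2]\le C$. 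After this reduction, the weighted expectation of $\kappa(S_s,S_s')^{C_2}$ is controlled by the \emph{unweighted} expectation $\ee[\kappa(\tilde S, \tilde S')^{C_2}]$ for a pair of ordinary walks started at the ends, up to a constant — and this is exactly what Lemma~\ref{kappalmm} estimates, giving $C_1(\log(t-s))^{C_2}(t-s)^{-(d-2)C_2'/2}$, which is $\le C_1(t-s)^{-C_2}$ after relabeling constants (absorbing the logarithm and adjusting the exponent, using $d\ge 3$).

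The cleanest way to implement the change of measure is probably: write $\ee\big[\sum_{y,y'}\zeta(y)\zeta(y')\kappa(y,y')^{C_2}\big] = \ee\big[\sum_{y,y'} \tfrac{Y(s,t,x,y)Y(s,t,x,y')}{Y(s,t,x)^2}\kappa(y,y')^{C_2}\big]$, bound $Y(s,t,x)^{-2}$ using the lower-tail control analogous to Lemma~\ref{techlmm1} (the quantity $Y(s,t,x)$ has the same law as $Y(t-s,0)$, so $\ee[Y(s,t,x)^{-p}]\le C(p)$ for every $p$), apply H\"older to split off this factor, and then compute $\ee\big[(\sum_{y,y'}Y(s,t,x,y)Y(s,t,x,y')\kappa(y,y')^{C_2})^{q}\big]$ for a suitable $q>1$ close to $1$. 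Expanding the $q$-th power (or just taking $q=1$ if the lower-tail bound is strong enough, which it is since it gives bounds for all negative moments) and using $\ee[Y_q Y_r] = \mu(\beta)^{|q\cap r|}$ as in~\eqref{yqyr}, the disorder average turns the weighted sum into $\ee[\mu(\beta)^{N_{t-s}}\kappa(S_s,S_s')^{C_2}]$ over a genuine pair of independent walks; a final H\"older split (choosing $\gamma>1$ with $\mu(\beta)^\gamma<1/\rho_d$, exactly as in the proofs of Lemmas~\ref{techlmm2} and~\ref{covlmm}) separates $\ee[\mu(\beta)^{\gamma N_\infty}]^{1/\gamma}<\infty$ from $\ee[\kappa(S_s,S_s')^{C_2\gamma'}]^{1/\gamma'}$, and the latter is bounded by Lemma~\ref{kappalmm}.

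The main obstacle is handling the random normalization $Y(s,t,x)^{-2}$ cleanly — one must be careful that the negative moments of $Y(s,t,x)$ are genuinely bounded uniformly in $s,t$, not just finite. This is where Lemma~\ref{techlmm1} (applied to $F(t-s,x)=\log Y(s,t,x)$, which has the same law) does the work: it gives $\ee[e^{-\theta F(s,t,x)}]\le C(\theta)$ for every $\theta>0$, hence all negative moments of $Y(s,t,x)$ are bounded by constants depending only on $d$, $\beta$, $\sigma$, and not on $s,t$. With that in hand, a single application of H\"older (exponents $p$ and $p'$ with $p$ large, absorbing the negative-moment factor into the $C_1$) reduces everything to a weighted second-moment computation of the type already carried out above, and the claimed bound $C_1(t-s)^{-C_2}$ follows by adjusting $C_2$ downward to swallow the polylogarithmic factor from Lemma~\ref{kappalmm}.
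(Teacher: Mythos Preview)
Your overall strategy is right and matches the paper's: reduce to controlling the unnormalized quantity $\ee\bigl[\sum_{y,y'} Y(s,t,x,y)Y(s,t,x,y')\kappa(y,y')^{C_2}\bigr]$, which equals $\ee[\mu(\beta)^{N_{t-s}}\kappa(S_{t-s},S'_{t-s})^{C_2}]$ for two independent walks, then split by H\"older with $\delta>1$ chosen so that $\mu(\beta)^\delta<1/\rho_d$, and invoke Lemma~\ref{kappalmm}. That part of your sketch is exactly what the paper does.

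The gap is in how you remove the random denominator $Y(s,t,x)^{-2}$. Your H\"older split $\ee[Y^{-2}B]\le \ee[Y^{-2p}]^{1/p}\,\ee[B^{p'}]^{1/p'}$ forces $p'>1$, and you then need $\ee[B^{p'}]$ to decay in $t-s$. But $B\le Y(s,t,x)^2$, and in the $L^2$ regime $\mu(\beta)<1/\rho_d$ there is no uniform bound on $\ee[Y^{2p'}]$ for $p'>1$ (higher moments of the partition function require stronger conditions on $\beta$). Your parenthetical ``just take $q=1$'' does not resolve this: with $q=1$ the conjugate exponent is $\infty$, and $Y^{-2}$ is not bounded. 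So the H\"older route, as written, does not close.

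The paper sidesteps this by a threshold argument rather than H\"older: on the event $\{Y(s,t,x)\ge (t-s)^{-\alpha}\}$ one replaces $Y^{-2}$ by the deterministic $(t-s)^{2\alpha}$ and then takes the expectation of $B$ directly (this is the $q=1$ computation you wanted); on the complement one uses $\var'(W_\ve)\le C$ (since $\kappa\le 1$ and $\zeta$ is a probability measure) together with $\pp(Y<(t-s)^{-\alpha})\le C(t-s)^{-\alpha}$ from Lemma~\ref{techlmm1}. Optimizing over $\alpha$ balances the two pieces. An equivalent one-line fix, in the spirit of your argument, is to use the pointwise bound $B/Y^2\le 1$: for any $\theta\in(0,1)$, $B/Y^2\le (B/Y^2)^\theta = B^\theta Y^{-2\theta}$, and then H\"older with exponents $1/\theta$ and $1/(1-\theta)$ gives $\ee[B/Y^2]\le \ee[B]^\theta\,\ee[Y^{-2\theta/(1-\theta)}]^{1-\theta}\le C\,\ee[B]^\theta$, after which your computation of $\ee[B]$ goes through.
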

\begin{proof}
Let $C_2$ be as in the statement of Lemma \ref{vpwlmm}. First, suppose that in a particular realization of the noise field, we have $Y(s,t,x)\ge (t-s)^{-\alpha}$, where $\alpha>0$ is a constant that we will choose later. Then we have
\begin{align*}
&\sum_{y,y'\in D(s,t,x)} \zeta(s,t,x,y)\zeta(s,t,x,y')\kappa(y,y')^{C_2} \\
&\le (t-s)^{2\alpha} \sum_{y,y'\in D(s,t,x)} Y(s,t,x,y)Y(s,t,x,y')\kappa(y,y')^{C_2}.
\end{align*}
Now observe that
\begin{align*}
&\ee\biggl(\sum_{y,y'\in D(s,t,x)} Y(s,t,x,y)Y(s,t,x,y')\kappa(y,y')^{C_2}\biggr) \\
&= \frac{1}{(2d)^{2(t-s)}}\sum_{y,y'\in D(s,t,x)} \sum_{\substack{q\in \cq(s,t,x,y),\\ q'\in \cq(s,t,x,y')}} \ee(Y_q(s,t,x,y)Y_{q'}(s,t,x,y'))\kappa(y,y')^{C_2}\\
&=  \frac{1}{(2d)^{2(t-s)}}\sum_{y,y'\in D(s,t,x)} \sum_{\substack{q\in \cq(s,t,x,y),\\ q'\in \cq(s,t,x,y')}} \mu(\beta)^{|q\cap q'|}\kappa(y,y')^{C_2}\\
&= \frac{1}{(2d)^{2(t-s)}}\sum_{q,q'\in \cq(s,t,x)} \mu(\beta)^{|q\cap q'|} \kappa(q_s,q_s')^{C_2}. 
\end{align*}
The expression in the last line equals
\[
\ee(\mu(\beta)^{N_{t-s}} \kappa(S_{t-s}, S_{t-s}')^{C_2}),
\]
where $\{S_n\}_{n\ge 0}$ and $\{S_n'\}_{n\ge 0}$ are independent simple symmetric random walks started at the origin in $\zz^d$, and $N_{t-s}$ is the number of times they intersect up to time $t$. 
Since $\mu(\beta)<1/\rho_d$, we can choose $\delta >1$, depending only on $\beta$, $\sigma$, and $d$, such that $\mu(\beta)^\delta <1/\rho_d$. Let $\delta' := \delta/(\delta-1)$, so that $1/\delta' + 1/\delta =1$. Applying H\"older's inequality, we get
\begin{align*}
\ee(\mu(\beta)^{N_{t-s}} \kappa(S_{t-s}, S_{t-s}')^{C_2}) &\le [\ee(\mu(\beta)^{\delta N_{t-s}})]^{1/\delta} [\ee(\kappa(S_{t-s}, S_{t-s}')^{C_2\delta'})]^{1/\delta'}\\
&\le [\ee(\mu(\beta)^{\delta N_\infty})]^{1/\delta} [\ee(\kappa(S_{t-s}, S_{t-s}')^{C_2\delta'})]^{1/\delta'}
\end{align*}
Applying \eqref{npmf} and Lemma~\ref{kappalmm} to the right side gives
\begin{align*}
\ee(\mu(\beta)^{N_{t-s}} \kappa(S_{t-s}, S_{t-s}')^{C_2}) &\le C_3 (t-s)^{-C_4}. 
\end{align*}
Combining all of the above observations, and using Lemma \ref{vpwlmm}, we get that 
\begin{align*}
\ee[\var'(W_\ve(s,t,x)); Y(s,t,x)\ge (t-s)^{-\alpha}] &\le C_5 (t-s)^{-C_6+2\alpha}. 
\end{align*}
On the other hand, since $\kappa$ is uniformly bounded by $1$, and $\zeta(s,t,x,\cdot)$ is a probability mass function, Lemma \ref{vpwlmm} implies that $\var'(W_\ve(s,t,x))\le C_7$. Combining this with the above bound, we get
\begin{align*}
\ee[\var'(W_\ve(s,t,x))] &\le C_5 (t-s)^{-C_6+2\alpha} + C_7\pp(Y(s,t,x)< (t-s)^{-\alpha}). 
\end{align*}
Now note that $Y(s,t,x)$ has the same distribution as $Y(t-s,x)$. Therefore by Lemma \ref{techlmm1}, 
\begin{align*}
\pp(Y(s,t,x)< (t-s)^{-\alpha}) &= \pp(e^{-F(t-s,x)} > (t-s)^\alpha)\\
&\le (t-s)^{-\alpha} \ee(e^{-F(t-s,x)})\\
&\le C_8(t-s)^{-\alpha}.
\end{align*}
Plugging this bound into the preceding display, and choosing $\alpha = C_6/4$, we get the desired bound. 
\end{proof}
\subsection{Concentration of the renormalized height function}
The goal of this subsection is to show that the random variable $\log W_\ve(s,t,x)$, which we call the {\it renormalized height} of our random surface at $(t,x)$, is close to the deterministic quantity $\log G_\ve(s,x)$ in $L^2$ distance, if $1\ll t-s \ll \ve^{-1}$. The proof has several steps. The first step is the following, which shows that $W_\ve(s,t,x)$ is close to $G_\ve(s,x)$ in $L^2$ distance.
\begin{lmm}\label{wve2lmm}
We have 
\begin{align*}
\ee[(W_\ve(s,t,x)-G_\ve(s,x))^2]&\le  C_1 (\ve^2 (t-s)^2 + (t-s)^{-C_2}). 
\end{align*}
\end{lmm}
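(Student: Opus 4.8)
The plan is to combine the bias-variance decomposition for $W_\ve(s,t,x)$ that has already been assembled in the preceding lemmas. Writing $G_\ve(s,x)$ as a target, I would split
\[
\ee[(W_\ve(s,t,x)-G_\ve(s,x))^2] = \ee\bigl[(W_\ve(s,t,x)-\ee'(W_\ve(s,t,x)))^2\bigr] + \ee\bigl[(\ee'(W_\ve(s,t,x))-G_\ve(s,x))^2\bigr],
\]
using that $\ee'(W_\ve-\ee'(W_\ve))(\ee'(W_\ve)-G_\ve(s,x))=0$ since the second factor is $\mf$-measurable. The first term on the right is exactly $\ee[\var'(W_\ve(s,t,x))]$, which is bounded by $C_1(t-s)^{-C_2}$ by Lemma \ref{evpwlmm}. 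The second term is bounded pointwise by $C^2\ve^2(t-s)^2$ via Corollary \ref{epwcor}. Adding the two bounds and absorbing constants gives the claim.

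So the structure is: (1) invoke the orthogonality of the conditional-mean-zero part and the $\mf$-measurable part to get the clean variance-plus-squared-bias split; (2) identify the variance part with $\ee[\var'(W_\ve)]$ and apply Lemma \ref{evpwlmm}; (3) identify the bias part and apply Corollary \ref{epwcor}, squaring the bound there; (4) collect constants, noting that both $C$'s here depend only on the quantities allowed in this section (and in particular the dependence on $\ve|x|+\ve(t-s)+\ve\sqrt t+\ve^2 t$ is of the permitted increasing, continuous type, since that is already true of the constants in the two cited results). There is essentially no analytic obstacle: everything hard — the covariance estimate through $\kappa$, the random-walk intersection bounds, the lower-tail control of $Y(s,t,x)$ via the concentration machinery — has been done in Lemmas \ref{covlmm}, \ref{kappalmm}, \ref{vpwlmm}, \ref{evpwlmm} and in the Cole--Hopf approximation lemmas. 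The only point requiring a moment's care is the very first step: one should note that $\ee'$ is a genuine conditional expectation, so the cross term vanishes after taking $\ee'$ first and then $\ee$, which is the standard law-of-total-variance identity $\ee[(U-c)^2] = \ee[\var'(U)] + \ee[(\ee'(U)-c)^2]$ applied with $U = W_\ve(s,t,x)$ and the constant (in the relevant $\sigma$-algebra, deterministic) $c = G_\ve(s,x)$.

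If anything is an "obstacle," it is purely bookkeeping: making sure the two exponents $C_2$ produced by Lemma \ref{evpwlmm} and the constant from Corollary \ref{epwcor} are relabeled consistently so that the final statement has a single pair $(C_1,C_2)$, and confirming that the admissible dependence of constants on $\ve|x|+\ve(t-s)+\ve\sqrt t+\ve^2 t$ is preserved under addition and squaring — which it is, since sums and squares of increasing continuous positive functions are again increasing and continuous. Hence the proof will be only a few lines.
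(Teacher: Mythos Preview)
Your proposal is correct and matches the paper's proof essentially line for line: the paper also writes $\ee[(W_\ve(s,t,x)-G_\ve(s,x))^2] = \ee[\var'(W_\ve(s,t,x))] + \ee[(\ee'(W_\ve(s,t,x)) - G_\ve(s,x))^2]$ and then applies Lemma~\ref{evpwlmm} and Corollary~\ref{epwcor} to the two terms.
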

\begin{proof}
Simply observe that
\begin{align*}
&\ee[(W_\ve(s,t,x)-G_\ve(s,x))^2] \\
&= \ee[\var'(W_\ve(s,t,x)] + \ee[(\ee'(W_\ve(s,t,x)) - G_\ve(s,x))^2],
\end{align*}
and apply Lemma \ref{evpwlmm} and Corollary \ref{epwcor} to bound the two terms on the right.
\end{proof}
Next, we show that $\log W_\ve(s,t,x)$ is close to $\log G_\ve(s,x)$ in $L^{1/2}$ distance. 
\begin{lmm}\label{loglmm}
We have
\begin{align*}
\ee|\log W_\ve(s,t,x)-\log G_\ve(s,x)|^{1/2} &\le C_1(\sqrt{\ve(t-s)} + (t-s)^{-C_2}). 
\end{align*}
\end{lmm}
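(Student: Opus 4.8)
The plan is to transfer the $L^2$ estimate of Lemma~\ref{wve2lmm} for $W_\ve(s,t,x)-G_\ve(s,x)$ into an $L^{1/2}$ estimate for the difference of their logarithms. The only real obstruction is that $\log$ is not Lipschitz near $0$, so the (rare) event that $W_\ve(s,t,x)$ is much smaller than $G_\ve(s,x)$ must be handled separately; this is precisely where the concentration bounds of Section~\ref{concheightsec} enter. Abbreviate $a:=W_\ve(s,t,x)$ and $b:=G_\ve(s,x)$, and note that $a>0$ always, while by Lemma~\ref{gbounds} (together with $\sqrt s\le\sqrt t$ and $\ve^2 s\le\ve^2 t$) we have $C_0^{-1}\le b\le C_0$ for some constant $C_0\ge 1$ of the type allowed in this section. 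I would then split $\ee|\log a-\log b|^{1/2}$ over the events $\{a\ge b/2\}$ and $\{a<b/2\}$.

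On $\{a\ge b/2\}$ one has $\min(a,b)\ge\tfrac{1}{2C_0}$, so the elementary inequality $|\log a-\log b|\le|a-b|/\min(a,b)$ makes $\log$ effectively Lipschitz there, giving $|\log a-\log b|\le 2C_0|a-b|$. Combining this with the concavity of $u\mapsto u^{1/2}$, the Cauchy--Schwarz inequality, and Lemma~\ref{wve2lmm} yields $\ee[|\log a-\log b|^{1/2}\1_{\{a\ge b/2\}}]\le C(\ee[(a-b)^2])^{1/4}\le C(\ve^2(t-s)^2+(t-s)^{-C_2})^{1/4}$, which is at most $C(\sqrt{\ve(t-s)}+(t-s)^{-C_2})$, of the desired form.

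The substantive case is $\{a<b/2\}$, where $\log a<\log(C_0/2)$, so (using $a<b$) $|\log a-\log b|=\log b-\log a\le\log C_0+(\log a)_-$; here and below, for $r\in\rr$ we write $r_+:=\max(r,0)$ and $r_-:=\max(-r,0)$. Thus $|\log a-\log b|^{1/2}\1_{\{a<b/2\}}\le(\log C_0)^{1/2}\1_{\{a<b/2\}}+((\log a)_-)^{1/2}\1_{\{a<b/2\}}$. Since $\{a<b/2\}\subseteq\{|a-b|>\tfrac{1}{2C_0}\}$, Chebyshev's inequality and Lemma~\ref{wve2lmm} give $\pp(a<b/2)\le C(\ve^2(t-s)^2+(t-s)^{-C_2})$, which handles the first term. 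For the second, the Cauchy--Schwarz inequality separates off the indicator and reduces matters to a uniform bound $\ee[(\log a)_-]\le C$; this follows by writing $\log a=F_\ve(t,x)-\log Y(s,t,x)$, whence $(\log a)_-\le (F_\ve(t,x))_-+(\log Y(s,t,x))_+$, and then estimating $\ee[(F_\ve(t,x))_-]\le 1+\ee[e^{-F_\ve(t,x)}]\le C$ via Lemma~\ref{techlmm2} (with $\theta=1$, using $\ve|x|+\ve\sqrt t+\ve^2 t\le\ve|x|+\ve(t-s)+\ve\sqrt t+\ve^2 t$) and $\ee[(\log Y(s,t,x))_+]\le\ee[Y(s,t,x)]=1$. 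Putting these together, and using that $\ve(t-s)$ stays bounded under the standing convention — so that $\ve^2(t-s)^2$ and $\ve(t-s)$ are both $\le C\sqrt{\ve(t-s)}$ — the $\{a<b/2\}$ contribution is again $\le C(\sqrt{\ve(t-s)}+(t-s)^{-C_2})$. Adding the two contributions completes the proof. The one genuinely delicate step is the uniform control of $\ee[(\log a)_-]$: without the exponential lower-tail estimate of Lemma~\ref{techlmm2} for $F_\ve(t,x)$, the mean of $\log W_\ve(s,t,x)$ could a priori be arbitrarily negative and the argument would collapse.
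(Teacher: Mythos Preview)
Your proof is correct. Both your argument and the paper's hinge on the same two inputs---Lemma~\ref{wve2lmm} for the $L^2$ closeness of $W_\ve$ to $G_\ve$, and Lemma~\ref{techlmm2} to control small values of $W_\ve$---but the execution differs. The paper avoids any case split by using the pointwise inequality
\[
|\log a-\log b|^{1/2}\le |a-b|^{1/2}\bigl(a^{-1/2}+b^{-1/2}\bigr),
\]
and then a single application of Cauchy--Schwarz, bounding $\ee[W_\ve(s,t,x)^{-1}]$ directly via $\ee[Y(s,t,x)/Z_\ve(t,x)]\le(\ee Y^2)^{1/2}(\ee Z_\ve^{-2})^{1/2}$ together with~\eqref{y2bound} and Lemma~\ref{techlmm2} (with $\theta=2$). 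This yields the $\sqrt{\ve(t-s)}$ term immediately, without invoking the standing boundedness of $\ve(t-s)$. Your route---splitting on $\{a\ge b/2\}$ versus $\{a<b/2\}$ and controlling $\ee[(\log W_\ve)_-]$ rather than $\ee[W_\ve^{-1}]$---is slightly longer and, on the bad event, produces terms like $\ve^2(t-s)^2$ that you then have to reduce to $\sqrt{\ve(t-s)}$ using the convention on constants; but the logic is sound and the ingredients are the same.
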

\begin{proof}
First, note that 
\begin{align*}
&|\log W_\ve(s,t,x)-\log G_\ve(s,x)|^{1/2}  \\
&\le |W_\ve(s,t,x)- G_\ve(s,x)|^{1/2}(W_\ve(s,t,x)^{-1/2} + G_\ve(s,x)^{-1/2}).
\end{align*}
By Lemma \ref{techlmm2}, the observation that $Y(s,t,x)$ has the same law as $Y(t-s,x)$, and the bound \eqref{y2bound}, we get
\begin{align*}
\ee(W_\ve(s,t,x)^{-1}) &= \ee\biggl(\frac{Y(s,t,x)}{Z_\ve(t,x)}\biggr) \\
&\le [\ee(Y(t-s,x)^2)]^{1/2} [\ee(Z_\ve(t,x)^{-2})]^{1/2}\le C.
\end{align*}
Also, by Lemma \ref{gbounds}, $G_\ve(s,x)^{-1/2}\le C$, and by Lemma \ref{wve2lmm}, 
\[
\ee|W_\ve(s,t,x)-G_\ve(s,x))| \le C_1(\ve (t-s) + (t-s)^{-C_2}).
\]
It is now easy to complete the proof by combining all of the above information and applying the Cauchy--Schwarz inequality.
\end{proof}
We want to improve Lemma \ref{loglmm} to an $L^2$ bound. For that, we need the following moment bound.
\begin{lmm}\label{logbound}
For any $\alpha \ge 1$, 
\[
\ee|\log W_\ve(s,t,x)|^\alpha\le C(\alpha).
\]
\end{lmm}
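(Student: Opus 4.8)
The plan is to write $\log W_\ve(s,t,x) = F_\ve(t,x) - F(s,t,x)$, where $F(s,t,x) := \log Y(s,t,x)$ has the same law as $F(t-s,x) = \log Y(t-s,x)$, and then control the positive and negative parts of this difference separately. Setting $\log^+u := \max\{\log u,0\}$ and $\log^-u := \max\{-\log u,0\}$, we have the elementary bound $|\log W_\ve|^\alpha \le 2^{\alpha-1}\bigl((\log^+ W_\ve)^\alpha + (\log^- W_\ve)^\alpha\bigr)$, and since $\log W_\ve = \log Z_\ve(t,x) - \log Y(s,t,x)$,
\[
(\log^+ W_\ve)^\alpha \le 2^{\alpha-1}\bigl((\log^+ Z_\ve(t,x))^\alpha + (\log^- Y(s,t,x))^\alpha\bigr),
\]
\[
(\log^- W_\ve)^\alpha \le 2^{\alpha-1}\bigl((\log^- Z_\ve(t,x))^\alpha + (\log^+ Y(s,t,x))^\alpha\bigr).
\]
So it suffices to bound the $\alpha$-th moments of $\log^{\pm} Z_\ve(t,x)$ and $\log^{\pm} Y(s,t,x)$ by a constant $C(\alpha)$.

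For the positive parts I would use the elementary inequality $(\log^+u)^\alpha \le C(\alpha)\,u$, valid for all $u\ge 0$ (the ratio $(\log u)^\alpha/u$ is bounded on $[1,\infty)$). This gives $\ee[(\log^+ Y(s,t,x))^\alpha] \le C(\alpha)\,\ee[Y(s,t,x)] = C(\alpha)$, since $\ee[Y(s,t,x)] = \ee[Y(t-s,x)] = 1$, and $\ee[(\log^+ Z_\ve(t,x))^\alpha] \le C(\alpha)\,\ee[Z_\ve(t,x)] \le C(\alpha)$, using $\ee[Z_\ve(t,x)] \le \ee[Z_\ve(t,x)^2]^{1/2} \le C$ from \eqref{zve2}. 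For the negative parts I would use the dual inequality $(\log^-u)^\alpha \le C(\alpha)\,u^{-1}$, valid for all $u>0$, together with the exponential-moment bounds already proved: $\ee[(\log^- Y(s,t,x))^\alpha] \le C(\alpha)\,\ee[Y(s,t,x)^{-1}] = C(\alpha)\,\ee[e^{-F(t-s,x)}] \le C(\alpha)$ by Lemma \ref{techlmm1}, and $\ee[(\log^- Z_\ve(t,x))^\alpha] \le C(\alpha)\,\ee[Z_\ve(t,x)^{-1}] = C(\alpha)\,\ee[e^{-F_\ve(t,x)}] \le C(\alpha)$ by Lemma \ref{techlmm2} (with $\theta = 1$, and using that $\psi$ is a constant under the conventions of this section). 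Plugging these four bounds into the decomposition above yields $\ee|\log W_\ve(s,t,x)|^\alpha \le C(\alpha)$.

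Since every ingredient is already available, there is no genuine obstacle here; the only point that requires attention is that on the positive side one cannot afford exponential moments of $\log Y(s,t,x)$ of order greater than $2$, because $\ee[Y(t-s,x)^\theta]$ for $\theta>2$ may grow with $t-s$. This is precisely why the slow growth of the logarithm — encoded in $(\log^+u)^\alpha\le C(\alpha)u$ — is what makes the argument work: only the first moment $\ee[Y(s,t,x)] = 1$ (and $\ee[Z_\ve(t,x)]\le C$) is needed. The negative side is unproblematic, as $F$ and $F_\ve$ possess all exponential moments by Lemmas \ref{techlmm1} and \ref{techlmm2}.
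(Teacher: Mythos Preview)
Your proof is correct and follows essentially the same route as the paper: both reduce the bound on $\ee|\log W_\ve|^\alpha$ to the first-moment identities $\ee[Y(s,t,x)]=1$, $\ee[Z_\ve(t,x)]\le C$ together with the negative-exponential bounds of Lemmas~\ref{techlmm1} and~\ref{techlmm2}. The only cosmetic difference is that the paper uses $|F|^\alpha\le C(\alpha)e^{|F|}\le C(\alpha)(e^F+e^{-F})$ applied to $F_\ve(t,x)$ and $F(t-s,x)$ separately, whereas you split into $\log^\pm$ and use $(\log^\pm u)^\alpha\le C(\alpha)u^{\pm1}$; these are equivalent packagings of the same inequality.
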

\begin{proof}
First, note that since $Y(s,t,x)$ has the same distribution as $Y(t-s,x)$,
\begin{align*}
\ee|\log W_\ve(s,t,x)|^\alpha &\le C(\alpha) (\ee|\log Z_\ve(t,x)|^\alpha + \ee|\log Y(s,t,x)|^\alpha)\\
&= C(\alpha) (\ee|F_\ve(t,x)|^\alpha + \ee|F(t-s,x)|^\alpha). 
\end{align*}
Now, by Lemma \ref{techlmm1},
\begin{align*}
 \ee|F(t-s,x)|^\alpha &\le C(\alpha)\ee(e^{|F(t-s,x)|})\\
 &\le C(\alpha)(\ee(e^{F(t-s,x)}) + \ee(e^{-F(t-s,x)}))\\
 &\le C(\alpha)(\ee(Y(t-s,x)) + C).
\end{align*}
But, as already observed in the proof of Lemma \ref{techlmm1}, $\ee(Y(t-s,x))=1$. Thus, 
\[
\ee|F(t-s,x)|^\alpha \le C(\alpha).
\]
Similarly, by Lemma \ref{techlmm2}, 
\begin{align*}
\ee|F_\ve(t,x)|^\alpha &\le C(\alpha) \ee(e^{|F_\ve(t,x)|})\\
&\le C(\alpha) (\ee(e^{F_\ve(t,x)}) + \ee(e^{-F_\ve(t,x)}))\\
&\le C(\alpha) (\ee(Z_\ve(t,x)) + C). 
\end{align*}
By \eqref{expsn} and the Lipschitz property of $g$, 
\begin{align*}
\ee(Z_\ve(t,x)) &= \frac{1}{(2d)^t}\sum_{q\in \cq(t,x)} e^{\beta g_\ve(q_0)} \\
&\le C_1 e^{C_2\ve |x|} \ee(e^{C_3\ve |S_t|})\le C_4. 
\end{align*}
Combining the above observations, the proof is complete. 
\end{proof}
We are now ready to prove the main result of this subsection, which is the following.
\begin{lmm}\label{loglmm2}
We have
\begin{align*}
\ee[(\log W_\ve(s,t,x)-\log G_\ve(s,x))^2] &\le C_1((\ve(t-s))^{1/4} + (t-s)^{-C_2}). 
\end{align*}
\end{lmm}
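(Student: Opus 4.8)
The plan is to upgrade the $L^{1/2}$ control already obtained in Lemma \ref{loglmm} to an $L^2$ bound by a soft interpolation argument against a uniform higher moment bound. Write $U := \log W_\ve(s,t,x) - \log G_\ve(s,x)$. First I would record that $U$ has a bounded fourth moment: Lemma \ref{logbound} (with $\alpha = 4$) gives $\ee|\log W_\ve(s,t,x)|^4 \le C$, and Lemma \ref{gbounds} gives $C_1 e^{-C_2\ve(|x|+\sqrt s)} \le G_\ve(s,x) \le C_3 e^{C_4(\ve|x|+\ve^2 s)}$, so $|\log G_\ve(s,x)| \le C$ since $\ve|x|$, $\ve\sqrt s$ and $\ve^2 s$ are all dominated by the controlled quantity $\ve|x|+\ve(t-s)+\ve\sqrt t+\ve^2 t$. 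Hence $\ee|U|^4 \le C(\ee|\log W_\ve(s,t,x)|^4 + |\log G_\ve(s,x)|^4) \le C$.

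Next I would apply H\"older's inequality to the splitting $|U|^2 = |U|^{2(1-\theta)}\,|U|^{2\theta}$ with $\theta = 6/7$ and conjugate exponents $p = 7/4$, $q = 7/3$ (chosen so that $2(1-\theta)p = 1/2$ and $2\theta q = 4$), which yields
\[
\ee[U^2] \le \bigl(\ee|U|^{1/2}\bigr)^{4/7}\,\bigl(\ee|U|^4\bigr)^{3/7}.
\]
Using $\ee|U|^4 \le C$, then Lemma \ref{loglmm}, and finally the subadditivity of $r\mapsto r^{4/7}$, this gives
\[
\ee[U^2] \le C\bigl(\ee|U|^{1/2}\bigr)^{4/7} \le C\bigl(\sqrt{\ve(t-s)} + (t-s)^{-C_2}\bigr)^{4/7} \le C\bigl((\ve(t-s))^{2/7} + (t-s)^{-C_2'}\bigr),
\]
with $C_2' := 4C_2/7$.

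It then remains only to match the form stated in the lemma. Since $2/7 > 1/4$, when $\ve(t-s)\le 1$ one has $(\ve(t-s))^{2/7}\le (\ve(t-s))^{1/4}$; when $\ve(t-s) > 1$ the quantity $(\ve(t-s))^{1/4}$ already exceeds $1$ while $\ee[U^2]\le C$ with $C$ increasing in the controlled quantity (which contains $\ve(t-s)$), so in either case $\ee[U^2]\le C_1\bigl((\ve(t-s))^{1/4} + (t-s)^{-C_2}\bigr)$. I do not anticipate any real obstacle here: all the substance is already in Lemmas \ref{loglmm} and \ref{logbound}, and the only point needing a little care is checking that $\log G_\ve(s,x)$ is bounded by a constant under the running convention, which is precisely what Lemma \ref{gbounds} supplies. (Alternatively, the same conclusion follows from a truncation argument, splitting $\ee[U^2]$ over $\{|U|\le \delta\}$ and $\{|U|>\delta\}$, controlling the first piece by $\delta^{3/2}\ee|U|^{1/2}$ and the second by Cauchy--Schwarz together with Markov's inequality for $\ee|U|^{1/2}$, and optimizing over $\delta$; this leads to the same exponent $2/7$.)
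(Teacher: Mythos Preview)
Your proposal is correct and follows essentially the same approach as the paper: interpolate the $L^{1/2}$ bound of Lemma~\ref{loglmm} against a higher moment bound from Lemma~\ref{logbound} (with $|\log G_\ve(s,x)|$ controlled by Lemma~\ref{gbounds}). The only difference is cosmetic: the paper uses the Cauchy--Schwarz splitting $\ee(X^2)=\ee(X^{1/4}X^{7/4})\le \sqrt{\ee(X^{1/2})\,\ee(X^{7/2})}$, which lands directly on the exponent $1/4$, whereas your H\"older splitting with $(p,q)=(7/4,7/3)$ uses the fourth moment and gives the slightly sharper exponent $2/7$, after which you need one extra line to match the stated $1/4$.
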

\begin{proof}
For a nonnegative random variable $X$, note that by the Cauchy--Schwarz inequality,
\[
\ee(X^2) = \ee(X^{1/4} X^{7/4}) \le \sqrt{\ee(X^{1/2}) \ee(X^{7/2})}. 
\]
Applying this to the random variable $|\log W_\ve(s,t,x)-\log G_\ve(s,x)|$, and using Lemma \ref{loglmm}, Lemma \ref{logbound}, and Lemma \ref{gbounds}, we get the desired result.
\end{proof}

\subsection{Concentration of local averages}
Let $1\le r\le \ve^{-1}$ be a real number. Let $B(x,r)$ be the set of all points in $\zz^d$ that are within Euclidean distance $r$ from $x$. Define 
\begin{align}\label{xdef}
X := \frac{1}{|B(x,r)|} \sum_{y\in B(x,r)} F_\ve(t,y).
\end{align}
Let $\alpha := \log G_\ve(t,x) + \ee(\log Y(t-s,0))$. The following result shows that $X$ is close to $\alpha$ in $L^2$ distance if $1\ll t-s\ll r\ll \ve^{-1}$. 
\begin{lmm}\label{xlmm}
Let $r$, $X$ and $\alpha$ be as above. Then
\begin{align*}
\ee[(X-\alpha)^2] &\le C_1 r^{-d}(t-s)^d+C_1(\ve(t-s))^{1/4}\\
&\qquad +C_1(t-s)^{-C_2} +C_1\ve^2(t-s) + C_1\ve^2r^2.
\end{align*}
\end{lmm}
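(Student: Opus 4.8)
The plan is to decompose $X - \alpha$ using the identity $F_\ve(t,y) = \log Z_\ve(t,y) = \log W_\ve(s,t,y) + \log Y(s,t,y)$, which holds for each $y \in B(x,r)$ by the definition of the renormalized partition function. Averaging over $y \in B(x,r)$ gives
\begin{align*}
X &= \frac{1}{|B(x,r)|}\sum_{y\in B(x,r)} \log W_\ve(s,t,y) + \frac{1}{|B(x,r)|}\sum_{y\in B(x,r)} \log Y(s,t,y).
\end{align*}
The first sum I would compare to $\log G_\ve(t,x)$ (note: evaluated at time $t$, not $s$), and the second sum I would compare to $\ee(\log Y(t-s,0))$. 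So I would write $X - \alpha = A + B$ where $A := \frac{1}{|B(x,r)|}\sum_y (\log W_\ve(s,t,y) - \log G_\ve(t,x))$ and $B := \frac{1}{|B(x,r)|}\sum_y (\log Y(s,t,y) - \ee(\log Y(t-s,0)))$, and bound $\ee[(X-\alpha)^2] \le 2\ee(A^2) + 2\ee(B^2)$.

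For the term $A$: by Lemma~\ref{loglmm2}, $\ee[(\log W_\ve(s,t,y) - \log G_\ve(s,y))^2]$ is small (of order $(\ve(t-s))^{1/4} + (t-s)^{-C_2}$) for each fixed $y$, uniformly over $y \in B(x,r)$ since $|y - x| \le r = o(\ve^{-1})$ keeps the relevant controlling quantity bounded. Then I would pass from $\log G_\ve(s,y)$ to $\log G_\ve(t,x)$ in two steps: first Lemma~\ref{gbounds} together with Lemma~\ref{glip} gives $|\log G_\ve(s,y) - \log G_\ve(s,x)| \le C\ve|y-x| \le C\ve r$, and then Lemma~\ref{gbounds} with Lemma~\ref{gtbound} gives $|\log G_\ve(s,x) - \log G_\ve(t,x)| \le C\ve\sqrt{t-s}$ (the latter is dominated by $C(\ve(t-s))^{1/4}$ when $\ve(t-s)\le 1$, and otherwise absorbed — but actually since $r \le \ve^{-1}$ forces the analysis in the regime of interest, these contribute $\ve^2 r^2$ and $\ve^2(t-s)$ after squaring). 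Using $(\sum a_i)^2 \le |B(x,r)| \sum a_i^2$ pointwise but then taking expectation termwise (the clean route: $\ee[A^2] \le \frac{1}{|B(x,r)|}\sum_y \ee[(\log W_\ve(s,t,y)-\log G_\ve(t,x))^2]$ by Jensen/convexity of squaring applied to the average), each summand is controlled by $\ee[(\log W_\ve - \log G_\ve(s,y))^2] + C\ve^2 r^2 + C\ve^2(t-s)$, giving $\ee[A^2] \le C_1(\ve(t-s))^{1/4} + C_1(t-s)^{-C_2} + C_1\ve^2 r^2 + C_1\ve^2(t-s)$.

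The term $B$ is where the real work lies and I expect it to be the main obstacle, since it is the only place where the averaging radius $r$ must actually \emph{help} rather than just be harmless: each $\log Y(s,t,y)$ individually has order-one fluctuations (by Lemmas~\ref{techlmm1}) that do not vanish, so cancellation across $y \in B(x,r)$ is essential. Here I would note $Y(s,t,y)$ has the same law as $Y(t-s,y)$, hence $\ee(\log Y(s,t,y)) = \ee(\log Y(t-s,0))$ by translation invariance of the noise field, so $B$ has mean zero and $\ee[B^2] = \var(B) = \frac{1}{|B(x,r)|^2}\sum_{y,y'\in B(x,r)} \cov(\log Y(s,t,y), \log Y(s,t,y'))$. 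To bound the covariances I would exploit that $Y(s,t,y)$ and $Y(s,t,y')$ depend on the noise only through paths ending at $y$ and $y'$ respectively over the time window $(s,t]$, so their dependence is governed by path intersections; arguing as in Lemma~\ref{covlmm} (with a covariance-splitting and Hölder step using $\mu(\beta)^\gamma < 1/\rho_d$, plus the lower moment bounds on $Y$ from \eqref{y2bound} and \eqref{yhalf} to control the covariance of logarithms via the covariance of the variables themselves) one gets $|\cov(\log Y(s,t,y), \log Y(s,t,y'))| \le C_1 \kappa(y,y')^{C_2}$. Then $\ee[B^2] \le \frac{C_1}{|B(x,r)|^2}\sum_{y,y'\in B(x,r)}\kappa(y,y')^{C_2}$; using the bound $\kappa(y,y') \le C(\log(2+|y-y'|))^{C}(2+|y-y'|)^{2-d}$ from \eqref{kappabd} and summing over the ball (the diagonal-ish contributions where $|y-y'| \lesssim t-s$ are crudely bounded by $1$ and cost a factor $(t-s)^d/r^d$ after normalization; the rest decays), one obtains $\ee[B^2] \le C_1 r^{-d}(t-s)^d$, possibly up to adjusting $C_2$ and absorbing logarithmic factors into the constants. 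Collecting $\ee[(X-\alpha)^2] \le 2\ee[A^2] + 2\ee[B^2]$ and renaming constants yields exactly the claimed inequality.
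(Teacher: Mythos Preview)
Your decomposition and the treatment of $A$ match the paper. The gap is in the $B$-term: you claim $|\cov(\log Y(s,t,y), \log Y(s,t,y'))| \le C_1 \kappa(y,y')^{C_2}$ ``arguing as in Lemma~\ref{covlmm}'', but that lemma bounds $\cov(Z_\ve,Z_\ve')$, not the covariance of logarithms. There is no general mechanism for passing from a bound on $\cov(Y,Y')$ to one on $\cov(\log Y,\log Y')$ using only the moment information you cite --- the logarithm is sensitive to the lower tail, and dependence concentrated where $Y$ or $Y'$ is small can inflate $\cov(\log Y,\log Y')$ even when $\cov(Y,Y')$ is small. Your parenthetical about ``covariance-splitting'' and lower moment bounds does not supply such a mechanism.

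The paper bypasses this with a much simpler observation you overlooked: $Y(s,t,y)$ depends only on the noise variables $\xi_{i,z}$ with $s<i\le t$ and $z$ within $t-s$ steps of $y$, so once $|y-y'|$ exceeds (a constant times) $t-s$ the random variables $Y(s,t,y)$ and $Y(s,t,y')$ are functions of disjoint noise and hence \emph{independent}, giving $\cov(\log Y(s,t,y),\log Y(s,t,y')) = 0$ outright. For the remaining $O(r^d(t-s)^d)$ pairs the crude Cauchy--Schwarz bound $|\cov|\le C$ (from $\ee|\log Y(s,t,y)|^2\le C$, established inside Lemma~\ref{logbound}) suffices, yielding $\var(B)\le C r^{-d}(t-s)^d$ directly. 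Your own summation sketch (``diagonal-ish contributions crudely bounded by $1$; the rest decays'') becomes exactly this once you replace the unproven polynomial decay in $\kappa$ by exact vanishing.
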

\begin{proof}
Note that paths in $\cq(s,t,x)$ and $\cq(s,t,y)$ do not intersect if $|x-y|>t-s$. Thus, $Y(s,t,x)$ and $Y(s,t,y)$ are independent random  variables if $|x-y|>t-s$. On the other hand, from the proof of Lemma \ref{logbound}, we know that $\ee|\log Y(s,t,x)|^2\le C$. Combining these two observations, we get that 
\begin{align*}
&\var\biggl(\frac{1}{|B(x,r)|}\sum_{y\in B(x,r)}\log Y(s,t,y)\biggr)\\
&=  \frac{1}{|B(x,r)|^2}\sum_{y,y'\in B(x,r)} \cov(\log Y(s,t,y), \log Y(s,t,y')) \\
&=  \frac{1}{|B(x,r)|^2} \sum_{y,y'\in B(x,r), \, |y-y'|\le t-s} \cov(\log Y(s,t,y), \log Y(s,t,y')) \\
&\le Cr^{-d}(t-s)^d.
\end{align*}
Define 
\[
\gamma := \ee(\log Y(t-s,0)) + \frac{1}{|B(x,r)|} \sum_{y\in B(x,r)} \log G_\ve(s,y).
\]
Recall that $F_\ve(t,y)  = \log Y(s,t,y)+ \log W_\ve(s,t,y)$. Since $Y(s,t,x)$ has the same distribution as $Y(t-s,x)$, we have $\ee(\log Y(s,t,y)) =  \ee(\log Y(t-s,0))$ for all $y$. Thus, 
\begin{align*}
\ee[(X-\gamma)^2] &= \ee\biggl[ \biggl(\frac{1}{|B(x,r)|} \sum_{y\in B(x,r)} (\log W_\ve(s,t,y) -\log G_\ve(s,y)) \\
&\qquad + \frac{1}{|B(x,r)|} \sum_{y\in B(x,r)} (\log Y(s,t,y) -  \ee(\log Y(s,t,y)))\biggr)^2\biggr]\\
&\le 2\ee\biggl[\biggl(\frac{1}{|B(x,r)|} \sum_{y\in B(x,r)} (\log W_\ve(s,t,y)-\log G_\ve(s,y))\biggr)^2\biggr]\\
&\qquad + 2\var\biggl(\frac{1}{|B(x,r)|} \sum_{y\in B(x,r)} \log Y(s,t,y)\biggr).
\end{align*}
By Lemma \ref{loglmm2},
\begin{align*}
&\ee\biggl[\biggl(\frac{1}{|B(x,r)|} \sum_{y\in B(x,r)} (\log W_\ve(s,t,y)-\log G_\ve(s,y))\biggr)^2\biggr]\\
&\le\frac{1}{|B(x,r)|} \sum_{y\in B(x,r)}  \ee[(\log W_\ve(s,t,y)-\log G_\ve(s,y))^2]\\
&\le C_1((\ve(t-s))^{1/4} + (t-s)^{-C_2}).
\end{align*}
Next, note that
\begin{align*}
|\alpha - \gamma| &\le  |\log G_\ve(s,x)-\log G_\ve(t,x)| \\
&\qquad + \frac{1}{|B(x,r)|} \sum_{y\in B(x,r)} |\log G_\ve(s,y)-\log G_\ve(s,x)|. 
\end{align*}
By Lemma \ref{gtbound} and Lemma \ref{gbounds}, we have
\begin{align*}
&|\log G_\ve(s,x)-\log G_\ve(t,x)|  \\
&\le |G_\ve(s,x)- G_\ve(t,x)| (G_\ve(s,x)^{-1}+G_\ve(t,x)^{-1})\\
&\le C\ve \sqrt{t-s}. 
\end{align*}
Similarly, by Lemma \ref{glip} and Lemma \ref{gbounds}, we have that for any $y\in B(x,r)$,
\begin{align*}
&|\log G_\ve(s,y)-\log G_\ve(s,x)|  \\
&\le |G_\ve(s,y)- G_\ve(s,x)| (G_\ve(s,y)^{-1}+G_\ve(s,x)^{-1})\\
&\le C\ve |x-y|\le C\ve r. 
\end{align*}
Combining all of the above observations, we get the desired bound. 
\end{proof}

\section{Proof of Proposition \ref{etaprop}}
Proposition \ref{etaprop} can be proved by a martingale argument, but we already have all the ingredients to give a proof that yields a rate of convergence. The following proposition is a better version of Proposition \ref{etaprop}. 
\begin{prop}\label{etaprop2}
Take any $\beta>0$ such that $\mu(\beta) < 1/\rho_d$. Let $Y(t,x)$ be defined as in equation \eqref{ydef}. For each positive integer $t$, let $\eta(\beta, t) := \ee(\log Y(t,0))$. Then $\eta(\beta) := \lim_{t\to\infty} \eta(\beta, t)$  exists and is finite. Moreover, for any $t$,
\[
|\eta(\beta,t) - \eta(\beta)| \le C_1 t^{-C_2},
\]
where $C_1$ and $C_2$ are positive constants that depend only on $\beta$, $d$ and $\sigma$.
\end{prop}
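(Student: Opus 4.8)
The plan is to derive Proposition~\ref{etaprop2} from the concentration estimates of Section~7, specialized to the zero initial condition, via a dyadic telescoping argument. Write $a_n := \eta(\beta,n) = \ee(\log Y(n,0))$. Taking $g\equiv 0$ in the constructions of Sections~5--7 (so that $G_\ve\equiv 1$, $Z_\ve=Y$, and $W_\ve(s,t,x) = Y(t,x)/Y(s,t,x)=:W(s,t,x)$), Lemma~\ref{wvelmm} gives, for integers $1\le s<t$,
\[
\log Y(t,0) = \log Y(s,t,0) + \log W(s,t,0).
\]
Since $Y(s,t,0)$ has the same law as $Y(t-s,0)$ (both are partition functions of a polymer of length $t-s$, and the noise field is i.i.d.), taking expectations yields the basic recursion
\begin{align}\label{etadecomp}
\eta(\beta,t) = \eta(\beta,t-s) + \ee\bigl(\log W(s,t,0)\bigr), \qquad 1\le s<t .
\end{align}

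Next I would extract the quantitative input. Applying Lemma~\ref{loglmm2} with $g\equiv 0$ and $x=0$ gives, for every $\ve\in(0,1)$, $\ee[(\log W(s,t,0))^2]\le C_1((\ve(t-s))^{1/4}+(t-s)^{-C_2})$, where, because $g\equiv 0$, the constants depend only on $\beta,d,\sigma$ together with $\ve(t-s)+\ve\sqrt t+\ve^2 t$ in an increasing, continuous way. The left-hand side does not depend on $\ve$, so letting $\ve\downarrow 0$ gives $\ee[(\log W(s,t,0))^2]\le C_1(t-s)^{-C_2}$ with $C_1,C_2>0$ depending only on $\beta,d,\sigma$. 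Combining with \eqref{etadecomp}, the Cauchy--Schwarz inequality, and renaming the exponent, I obtain
\begin{align}\label{etaquant}
|\eta(\beta,t)-\eta(\beta,t-s)| \le C_1(t-s)^{-C_2}, \qquad 1\le s<t .
\end{align}
(The $a_n$ are themselves finite, e.g.\ by Lemma~\ref{techlmm1} together with $\ee(Y(n,0))=1$.)

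Finally I would run the telescoping. Taking $t=2^{k+1}$, $s=2^k$ in \eqref{etaquant} gives $|a_{2^{k+1}}-a_{2^k}|\le C_1 2^{-kC_2}$, which is summable since $C_2>0$; hence $a_{2^k}$ converges to a finite limit $\eta(\beta)$, and $|a_{2^k}-\eta(\beta)|\le C_1' 2^{-kC_2}$. For a general integer $t\ge 2$, put $k:=[\log_2 t]$, so that $t/2<2^k\le t$. If $t=2^k$ we are done; otherwise apply \eqref{etaquant} with $s=t-2^k\in\{1,\dots,2^k-1\}$, noting that $\eta(\beta,t-s)=\eta(\beta,2^k)=a_{2^k}$ and $t-s=2^k$, to get $|a_t-a_{2^k}|\le C_1 2^{-kC_2}$. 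The triangle inequality and $2^k>t/2$ then give $|\eta(\beta,t)-\eta(\beta)|\le C_1'' t^{-C_2}$ for all $t\ge 1$ (adjusting constants for small $t$), which is the claim; Proposition~\ref{etaprop} follows at once, since $Y(t,0)$ is precisely the renormalized sum appearing there.

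The substantive work is already done: it is the Section~7 estimate that $\log W(s,t,0)$ is small in $L^2$ with an error decaying in the length $t-s$ of the \emph{recent} segment of the polymer. The only point requiring care in the present argument is that this error decays in $t-s$ and not in $t$ itself, so that a direct comparison of $\eta(\beta,t)$ with $\eta(\beta,t+1)$ yields nothing; one must instead strip off, at each step of the telescoping, a segment whose length is comparable to the current total time, which is exactly what the dyadic scheme accomplishes.
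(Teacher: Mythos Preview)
Your proof is correct and follows the same approach as the paper: specialize the Section~7 machinery to $g\equiv 0$ (so $G_\ve\equiv 1$, $Z_\ve=Y$), invoke Lemma~\ref{loglmm2}, and send $\ve\to 0$ to obtain the key inequality~\eqref{etaquant}. The only difference is in the endgame. Rewriting your \eqref{etaquant} via the substitution $u=t-s$ gives
\[
|\eta(\beta,t)-\eta(\beta,u)|\le C_1 u^{-C_2}\qquad\text{for all }1\le u<t,
\]
which is already the full Cauchy estimate with rate: the sequence converges, and letting $t\to\infty$ with $u$ fixed yields $|\eta(\beta)-\eta(\beta,u)|\le C_1 u^{-C_2}$ directly. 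This is exactly how the paper concludes. Your dyadic telescoping is correct but unnecessary, and the closing remark that ``a direct comparison of $\eta(\beta,t)$ with $\eta(\beta,t+1)$ yields nothing'' is not right: taking $u=t$ in the display above gives $|\eta(\beta,t+1)-\eta(\beta,t)|\le C_1 t^{-C_2}$. The point is that in \eqref{etaquant} the quantity $t-s$ is simultaneously the length of the recent segment \emph{and} the smaller of the two times being compared, so the bound is already in the form one wants.
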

\begin{proof}
Take any $1\le u<t$, and let $s:= t-u$. Take any $\ve \in (0,1)$. We will use Lemma \ref{loglmm2} with $g\equiv 0$. When $g\equiv 0$, we have $G_\ve(t,x)=1$ for any $\ve$, $t$ and $x$. Also, we have $Z_\ve(t,x)=Y(t,x)$. Thus, Lemma \ref{loglmm2} gives 
\begin{align*}
|\eta(\beta, t) - \eta(\beta, u)| &= |\ee(\log Y(t,0)) - \ee(\log Y(t-s,0))|\\
&= |\ee(\log Y(t,0)) - \ee(\log Y(s,t,0))|\\ 
&\le \ee|\log Y(t,0)-\log Y(s,t,0)|\\
&\le \sqrt{\ee[(\log Y(t,0)-\log Y(s,t,0))^2]} \\
&\le C_1((\ve(t-s))^{1/8} + (t-s)^{-C_2}), 
\end{align*}
where $C_1$ and $C_2$ are positive constants that depend only on $\beta$, $d$, $\sigma$, and the sum $\ve|x|+\ve(t-s)+\ve \sqrt{t}+ \ve^2 t$. Moreover the dependence on the sum is increasing and continuous. But the left side does not depend on $\ve$. So we can take $\ve \to 0$ and get the bound $|\eta(\beta,t)-\eta(\beta, u)|\le C_3 u^{-C_4}$, where $C_3$ and $C_4$ depend only on $\beta$, $d$, and $\sigma$. This suffices to prove both claims of the lemma.
\end{proof}

\section{Proof of Theorem \ref{polymerthm}}
Since the logarithm of any moment generating function is convex, 
\[
\frac{m'(\beta)}{m(\beta)} = \frac{d}{d\beta} \log m(\beta)
\]
is an increasing function of $\beta$. Thus,
\[
\frac{d}{d\beta} \log \mu(\beta) = 2\biggl(\frac{m'(2\beta)}{m(2\beta)} - \frac{m'(\beta)}{m(\beta)}\biggr)\ge 0.
\]
This shows that $\mu$ is a non-decreasing function, which implies that $\mu(\beta)<1/\rho_d$ for all $\beta\in (0,\beta_0)$. Thus, all the lemmas proved in the preceding sections are applicable when $\beta \in (0,\beta_0)$.

Take any $(t,x)\in \rr_{>0}\times \rr^d$. Let $t_\ve$ and $x_\ve$ be defined as in equation \eqref{tvedef} and let $\tf^{(\ve)}$ be defined as in equation \eqref{tfvedef}. Let $\{r_\ve\}_{\ve >0}$ be any collection of positive real numbers such that $1\ll r_\ve \ll \ve^{-1}$ as $\ve \to 0$. Define $s_\ve := t_\ve - [\sqrt{r_\ve}]$. Define $F_\ve$ as in Section \ref{concheightsec}, so that for any $y\in \zz^d$,
\begin{align}\label{fFeq}
F_\ve(t_\ve,y) = \beta f_\ve(t_\ve,y) - t_\ve \log(2dm(\beta)). 
\end{align}
In analogy with \eqref{xdef}, let
\begin{align*}
X_\ve := \frac{1}{|B(x_\ve,r_\ve)|} \sum_{y\in B(x_\ve,r_\ve)} F_\ve(t_\ve,y),
\end{align*}
so that, by \eqref{fFeq},
\begin{align}\label{fXeq}
\tf^{(\ve)}(t,x) &= \beta^{-1}X_\ve - \beta^{-1}\eta(\beta). 
\end{align}
Similarly, let $\alpha_\ve := \log G_\ve(t_\ve,x_\ve) + \ee(\log Y(t_\ve-s_\ve,0))$. Now observe that, as $\ve \to 0$,
\begin{itemize}
\item $\ve|x_\ve|+\ve(t_\ve-s_\ve)+\ve \sqrt{t_\ve}+ \ve^2 t_\ve$ remains uniformly bounded above,
\item $r_\ve^{-d}(t_\ve-s_\ve)^d \to 0$,
\item $\ve(t_\ve -s_\ve)\to 0$,
\item $t_\ve -s_\ve \to \infty$, and 
\item $\ve r_\ve \to 0$. 
\end{itemize}
Using all of the above observations, and the bound from Lemma \ref{xlmm}, we get that $X_\ve -\alpha_\ve\to 0$ in $L^2$ as $\ve \to 0$. But by Lemma \ref{glim} and Proposition \ref{etaprop2}, 
\begin{align*}
\lim_{\ve \to 0} \alpha_\ve &= \beta h(t,x) + \eta(\beta). 
\end{align*}
Therefore, by \eqref{fXeq}, $\tf^{(\ve)}(t,x)\to h(t,x)$ in $L^2$ as $\ve \to 0$. 

\section{Proof of Corollary \ref{polymercor}}
In this proof, $C, C_1,C_2,\ldots$ will denote positive constants that may depend only on $\beta$, $d$, $\sigma$, and $g$, whose values may change from line to line. As in the proof of Theorem \ref{polymerthm}, choose $\{r_\ve\}_{\ve >0}$ such that, as $\ve \to 0$, $1\ll r_\ve \ll \ve^{-1}$. Define $\tf^{(\ve)}$ as in \eqref{tfvedef}. By the Cauchy--Schwarz inequality, for any $t>0$,
\begin{align*}
&\ee\biggl[\biggl(\int \tf^{(\ve)}(t,x) \phi(x)dx - \int h(t,x)\phi(x) dx\biggr)^2\biggr] \\
&\le \int \ee[(\tf^{(\ve)}(t,x)-h(t,x))^2] |\phi(x)|dx \int |\phi(x)|dx. 
\end{align*}
By Theorem \ref{polymerthm}, $\ee[(\tf^{(\ve)}(t,x)-h(t,x))^2] \to 0$ for any $x$ as $\ve \to 0$. Moreover, by the bound from Lemma \ref{xlmm}, we see that this convergence is uniform over any compact set. Since $\phi$ has compact support and is bounded, this proves that as $\ve \to 0$, 
\[
\int \tf^{(\ve)}(t,x)\phi(x)dx \stackrel{L^2}{\to} \int h(t,x)\phi(x)dx.
\]
Thus, it suffices to show that
\begin{align}\label{ftf}
\int (f^{(\ve)}(t,x)-\tf^{(\ve)}(t,x))\phi(x)dx \stackrel{L^2}{\to} 0.
\end{align}
To prove this, first note that 
\begin{align*}
\tf^{(\ve)}(t,x) - f^{(\ve)}(t,x) &= \frac{1}{|B(x_\ve, r_\ve)|} \sum_{y\in B(x_\ve, r_\ve)} (f_\ve(t_\ve,y)-f_\ve(t_\ve,x_\ve))\\
&= \frac{1}{|B(0, r_\ve)|} \sum_{y\in B(0, r_\ve)} (f_\ve(t_\ve,x_\ve +y)-f_\ve(t_\ve,x_\ve)).
\end{align*}
Recall that $x_\ve = [\ve^{-1}x]$. Thus, by the change of variable $z = x +\ve y$,
\begin{align*}
\int f_\ve(t_\ve,x_\ve +y) \phi(x) dx &= \int f_\ve(t_\ve, [\ve^{-1}x] +y) \phi(x) dx \\
&= \int f_\ve(t_\ve,[\ve^{-1}z - y] +y) \phi(z - \ve y) dz.
\end{align*}
Since $y\in \zz^d$, $[\ve^{-1}z - y] = [\ve^{-1}z] - y$. Thus, 
\begin{align*}
\int f_\ve(t_\ve,x_\ve +y) \phi(x) dx &= \int f_\ve(t_\ve, x_\ve) \phi(x-\ve y) dx.
\end{align*}
Recall the function $F_\ve$ defined in Section \ref{concheightsec}. The above calculations show that
\begin{align*}
&\int (f^{(\ve)}(t,x)-\tf^{(\ve)}(t,x))\phi(x)dx \\
&= \frac{1}{|B(0,r_\ve)|} \sum_{y\in B(0,r_\ve)} \int f_\ve(t_\ve, x_\ve)(\phi(x)-\phi(x-\ve y)) dx\\
&= \frac{1}{|B(0,r_\ve)|} \sum_{y\in B(0,r_\ve)} \int \beta^{-1}F_\ve(t_\ve, x_\ve)(\phi(x)-\phi(x-\ve y)) dx, 
\end{align*}
where the last identity holds because $f_\ve(t_\ve, x_\ve)$ and $\beta^{-1}F_\ve(t_\ve, x_\ve)$ differ by a constant. Thus,
\begin{align*}
&\ee\biggl[\biggl(\int (f^{(\ve)}(t,x)-\tf^{(\ve)}(t,x))\phi(x)dx\biggr)^2\biggr]\\
&\le \frac{1}{|B(0,r_\ve)|} \sum_{y\in B(0,r_\ve)} \ee\biggl[\biggl(\int \beta^{-1}F_\ve(t_\ve, x_\ve)(\phi(x)-\phi(x-\ve y)) dx\biggr)^2\biggr].
\end{align*}
Let $R$ be so large that $\phi(x) = 0$ when $|x|\ge R-1$. For each $\delta>0$, let
\[
\omega(\delta):= \sup\{|\phi(y)-\phi(z)|: |y-z|\le \delta\}. 
\]
If $\ve$ is so small that $\ve r_\ve \le 1$, then for any $y\in B(0,r_\ve)$, $|x-\ve y|\ge |x|-1$. Therefore, by the Cauchy--Schwarz inequality,
\begin{align*}
&\ee\biggl[\biggl(\int \beta^{-1}F_\ve(t_\ve, x_\ve)(\phi(x)-\phi(x-\ve y)) dx\biggr)^2\biggr]\\
&= \ee\biggl[\biggl(\int_{|x|\le R} \beta^{-1}F_\ve(t_\ve, x_\ve)(\phi(x)-\phi(x-\ve y)) dx\biggr)^2\biggr]\\
&\le C R^d\omega(\ve r_\ve)\int_{|x|\le R} \ee[F_\ve(t_\ve, x_\ve)^2] dx.
\end{align*}
From the proof of Lemma \ref{logbound}, it is easy to see that $\ee[F_\ve(t_\ve, x_\ve)^2] \le C$ uniformly over $|x|\le R$. Plugging this into the above, we get
\begin{align*}
\ee\biggl[\biggl(\int (f^{(\ve)}(t,x)-\tf^{(\ve)}(t,x))\phi(x)dx\biggr)^2\biggr] &\le CR^{2d} \omega(\ve r_\ve). 
\end{align*}
By the uniform continuity of $\phi$, the right side tends to zero as $\ve \to 0$. This proves equation \eqref{ftf}, and hence completes the proof of the corollary.

\section*{Acknowledgments}
I thank Chiranjib Mukherjee, Nikos Zygouras, and the anonymous referees for a number of helpful comments and references. 

\bibliographystyle{plainnat}
\bibliography{myrefs}

\begin{thebibliography}{20}
\providecommand{\natexlab}[1]{#1}
\providecommand{\url}[1]{\texttt{#1}}
\expandafter\ifx\csname urlstyle\endcsname\relax
  \providecommand{\doi}[1]{doi: #1}\else
  \providecommand{\doi}{doi: \begingroup \urlstyle{rm}\Url}\fi

\bibitem[Bates and Chatterjee(2020)]{bateschatterjee20}
Erik Bates and Sourav Chatterjee.
\newblock The endpoint distribution of directed polymers.
\newblock \emph{Annals of Probability}, 48\penalty0 (2):\penalty0 817--871,
  2020.

\bibitem[Caravenna et~al.(2017)Caravenna, Sun, and Zygouras]{caravennaetal17}
Francesco Caravenna, Rongfeng Sun, and Nikos Zygouras.
\newblock Universality in marginally relevant disordered systems.
\newblock \emph{Annals of Applied Probability}, 27\penalty0 (5):\penalty0
  3050--3112, 2017.

\bibitem[Caravenna et~al.(2020)Caravenna, Sun, and Zygouras]{caravennaetal20}
Francesco Caravenna, Rongfeng Sun, and Nikos Zygouras.
\newblock The two-dimensional {KPZ} equation in the entire subcritical regime.
\newblock \emph{Annals of Probability}, 48\penalty0 (3):\penalty0 1086--1127,
  2020.

\bibitem[Carmona and Hu(2002)]{carmonahu02}
Philippe Carmona and Yueyun Hu.
\newblock On the partition function of a directed polymer in a gaussian random
  environment.
\newblock \emph{Probability theory and related fields}, 124\penalty0
  (3):\penalty0 431--457, 2002.

\bibitem[Chatterjee(2014)]{chatterjee14}
Sourav Chatterjee.
\newblock \emph{Superconcentration and Related Topics}.
\newblock Springer, Cham, 2014.

\bibitem[Chatterjee(2021{\natexlab{a}})]{chatterjee21}
Sourav Chatterjee.
\newblock Universality of deterministic {KPZ}.
\newblock \emph{arXiv preprint arXiv:2102.13131}, 2021{\natexlab{a}}.

\bibitem[Chatterjee(2021{\natexlab{b}})]{chatterjee21b}
Sourav Chatterjee.
\newblock Superconcentration in surface growth.
\newblock \emph{arXiv preprint arXiv:2103.09199}, 2021{\natexlab{b}}.

\bibitem[Chatterjee and Dunlap(2020)]{chatterjeedunlap20}
Sourav Chatterjee and Alexander Dunlap.
\newblock Constructing a solution of the $(2+1)$-dimensional {KPZ} equation.
\newblock \emph{Annals of Probability}, 48\penalty0 (2):\penalty0 1014--1055,
  2020.

\bibitem[Comets(2017)]{comets17}
Francis Comets.
\newblock \emph{Directed polymers in random environments}.
\newblock Lecture notes from the 46th Probability Summer School held in
  Saint-Flour, 2016. Springer, Cham, 2017.

\bibitem[Comets et~al.(2019)Comets, Cosco, and Mukherjee]{cometsetal19}
Francis Comets, Cl\'ement Cosco, and Chiranjib Mukherjee.
\newblock Space-time fluctuation of the {Kardar--Parisi--Zhang} equation in $d
  \geq 3$ and the {Gaussian} free field.
\newblock \emph{arXiv preprint arXiv:1905.03200}, 2019.

\bibitem[Comets et~al.(2020)Comets, Cosco, and Mukherjee]{cometsetal20}
Francis Comets, Cl\'ement Cosco, and Chiranjib Mukherjee.
\newblock Renormalizing the {Kardar--Parisi--Zhang} equation in $d\geq 3$ in
  weak disorder.
\newblock \emph{Journal of Statistical Physics}, 179\penalty0 (3):\penalty0
  713--728, 2020.

\bibitem[Cosco et~al.(2020)Cosco, Nakajima, and Nakashima]{coscoetal20}
Cl{\'e}ment Cosco, Shuta Nakajima, and Makoto Nakashima.
\newblock Law of large numbers and fluctuations in the sub-critical and {$L^2$}
  regions for {SHE} and {KPZ} equation in dimension $d\geq 3$.
\newblock \emph{arXiv preprint arXiv:2005.12689}, 2020.

\bibitem[Dunlap et~al.(2020)Dunlap, Gu, Ryzhik, and Zeitouni]{dunlapetal20}
Alexander Dunlap, Yu~Gu, Lenya Ryzhik, and Ofer Zeitouni.
\newblock Fluctuations of the solutions to the {KPZ} equation in dimensions
  three and higher.
\newblock \emph{Probability Theory and Related Fields}, 176\penalty0
  (3):\penalty0 1217--1258, 2020.

\bibitem[Gu(2020)]{gu20}
Yu~Gu.
\newblock Gaussian fluctuations from the {2D KPZ} equation.
\newblock \emph{Stochastics and Partial Differential Equations: Analysis and
  Computations}, 8\penalty0 (1):\penalty0 150--185, 2020.

\bibitem[Hoeffding(1963)]{hoeffding63}
Wassily Hoeffding.
\newblock Probability inequalities for sums of bounded random variables.
\newblock \emph{Journal of the American Statistical Association}, 58:\penalty0
  13--30, 1963.

\bibitem[Kardar et~al.(1986)Kardar, Parisi, and Zhang]{kardaretal86}
Mehran Kardar, Giorgio Parisi, and Yi-Cheng Zhang.
\newblock Dynamic scaling of growing interfaces.
\newblock \emph{Physical Review Letters}, 56\penalty0 (9):\penalty0 889--892,
  1986.

\bibitem[Ledoux(2001)]{ledoux01}
Michel Ledoux.
\newblock \emph{The concentration of measure phenomenon}.
\newblock American Mathematical Society, Providence, RI, 2001.

\bibitem[Lygkonis and Zygouras(2022)]{lygkoniszygouras22}
Dimitris Lygkonis and Nikos Zygouras.
\newblock Edwards--wilkinson fluctuations for the directed polymer in the full
  ${L}^2$-regime for dimensions $d \geq 3$.
\newblock \emph{Ann. Inst. Henri Poincar\'e Probab. Stat.}, 58\penalty0
  (1):\penalty0 65--104, 2022.

\bibitem[Magnen and Unterberger(2018)]{magnenunterberger18}
Jacques Magnen and J{\'e}r{\'e}mie Unterberger.
\newblock The scaling limit of the {KPZ} equation in space dimension 3 and
  higher.
\newblock \emph{Journal of Statistical Physics}, 171\penalty0 (4):\penalty0
  543--598, 2018.

\bibitem[Mukherjee et~al.(2016)Mukherjee, Shamov, and
  Zeitouni]{mukherjeeetal16}
Chiranjib Mukherjee, Alexander Shamov, and Ofer Zeitouni.
\newblock Weak and strong disorder for the stochastic heat equation and
  continuous directed polymers in $d\geq 3$.
\newblock \emph{Electronic Communications in Probability}, 21:\penalty0 12 pp.,
  2016.

\end{thebibliography}

\end{document}